\documentclass[final,12pt]{elsarticle}
\usepackage{geometry}
\usepackage{microtype}
\usepackage{amsmath}
\usepackage{amssymb}
\usepackage{amsfonts}
\usepackage{amsthm}
\usepackage{bm}
\usepackage{booktabs}
\usepackage{graphicx}
\usepackage{mathtools}
\usepackage{physics}
\usepackage{subcaption}
\usepackage[hidelinks, colorlinks, allcolors=blue, bookmarksdepth=2]{hyperref}
\usepackage{cleveref}
\usepackage{xspace}
\allowdisplaybreaks

\newtheorem{theorem}{Theorem}
\newtheorem{remark}{Remark}

\newtheorem{proposition}{Proposition}
\newtheorem{lemma}{Lemma}
\newtheorem{corollary}{Corollary}

\crefname{assumption}{Assumption}{Assumptions}
\crefname{remark}{Remark}{Remarks}
\crefname{theorem}{Theorem}{Theorems}
\crefname{proposition}{Proposition}{Propositions}
\crefname{lemma}{Lemma}{Lemmas}
\crefname{corollary}{Corollary}{Corollaries}
\crefname{section}{Section}{Sections}
\crefname{figure}{Fig.}{Figs.}
\crefname{table}{Table}{Tables}
\crefname{equation}{}{}
\crefname{appendix}{}{}

\newcommand{\etal}[0]{{et~al.\@}\xspace}
\newcommand{\eg}[0]{{e.g.\@}\xspace}
\newcommand{\ie}[0]{{i.e.\@}\xspace}

\newcommand{\der[2]}[]{\dfrac{{\rm d}#1}{{\rm d}#2}}%
\newcommand{\pder[2]}[]{\dfrac{\partial#1}{\partial#2}}
\newcommand{\fn}[1]{\mathcal{#1}}
\newcommand{\fnb}[1]{\bm{\mathcal{#1}}}
\renewcommand{\H}{\mathsf{H}}
\newcommand{\D}{\mathsf{D}}
\newcommand{\Q}{\mathsf{Q}}
\newcommand{\E}{\mathsf{E}}
\newcommand{\M}{\mathsf{M}}
\newcommand{\I}{\mathsf{I}}
\newcommand{\B}{\mathsf{B}}
\renewcommand{\P}{\mathsf{P}}
\newcommand{\R}{\mathsf{R}}
\newcommand{\N}{\mathsf{N}}

\renewcommand{\S}{\mathsf{S}}

\usepackage{xcolor}

\begin{document}

\hypersetup{pageanchor=false}
\begin{frontmatter}

\title{Entropy Stable Element-Local Continuous-Discontinuous Residual Correction Summation-By-Parts Discretization}

\author{Zelalem Arega Worku\corref{cor1}}
\ead{zelalem.worku@uwaterloo.ca}

\author{David C. Del Rey Fern{\'a}ndez}
\ead{ddelreyfernandez@uwaterloo.ca}

\cortext[cor1]{Corresponding author}

\affiliation{organization={Department of Applied Mathematics, University of Waterloo},
            addressline={200 University Avenue West},
            city={Waterloo},
            postcode={N2L 3G1},
            state={Ontario},
            country={Canada}}

\begin{abstract}
Residual correction provides an efficient alternative to two-point volume flux differencing for enforcing entropy stability in high-order discretizations of hyperbolic partial differential equations. We develop an element-local residual-correction summation-by-parts (SBP) method that combines continuous-SBP (C-SBP) and discontinuous-SBP (D-SBP) coupling on conforming meshes. The construction retains continuous assembly without simultaneous approximation terms (SATs) wherever suitable, restricts SATs to selected discontinuous facets, and permits the coupling to change in time as a foundation for adaptive discretizations. The scalar entropy defect and a graph-based distribution vector acting on conservative variables are computed entirely from element-local data, without neighboring states, auxiliary gradients, or correction SATs. The same correction applies to every facet partition, including the pure C-SBP and D-SBP limits. For fixed partitions, we establish conservation and entropy identities for each element residual, exact global conservation, entropy stability up to an explicit regularization defect, and design-order consistency. Changes in the partition are handled by a mass-weighted orthogonal projection that preserves conservative moments and convex admissible sets, does not increase entropy, and does not amplify error in the broken norm. Numerical experiments in one to three dimensions show that the hybrid method retains the smooth-solution accuracy of its D-SBP endpoint under repeated repartitioning. The nonlinear tests demonstrate robustness comparable to Hadamard-form two-point-flux discretizations, particularly when coupled with artificial dissipation.
\end{abstract}


\begin{keyword}
Summation-by-parts \sep Entropy stability \sep Residual correction \sep
Hybrid continuous--discontinuous discretization \sep Dynamic interface partition
\end{keyword}

\end{frontmatter}
\hypersetup{pageanchor=true}

\section{Introduction}\label{sec:introduction}

The state-of-the-art in high-order methods for nonlinear conservation laws
increasingly relies on discrete entropy stability to improve robustness.
The summation-by-parts (SBP) property is a key ingredient in many
entropy-stable formulations, since it provides a discrete analogue of
integration by parts and makes it possible to mimic continuous conservation
and entropy estimates. A widely used route to entropy stability is
two-point flux differencing
\cite{fisher2013high,gassner2016split,crean2018entropy}. This approach is
powerful but can be substantially more expensive than standard
divergence-form discretizations, especially at high polynomial degrees and
on elements where tensor-product structure is not available. It can also
be cumbersome to derive suitable two-point fluxes for new systems.
Entropy-split methods
\cite{harten1983symmetric,olsson1994energy,gerritsen1996designing,yee2000entropy,yee2023recent,worku2023entropy}
provide another path to entropy stability, but they are often
non-conservative. A third route, developed in the residual-correction
framework of Abgrall \etal
\cite{abgrall2018general,abgrall2022reinterpretation}, starts from a chosen
base residual and adds an algebraic correction whose moments enforce the
desired mathematical structures. This moment-based viewpoint is attractive because it separates the
construction of the base discretization from the enforcement of
additional residual identities, such as conservation and entropy
stability, allowing the base residual to be evaluated using efficient
divergence-form discretizations.

The cost of an entropy-stable discretization also depends on how neighboring
elements are coupled. Continuous SBP (C-SBP) is attractive on matching
facets because it avoids simultaneous approximation terms (SATs)
\cite{hicken2020entropy}, whose cost can be substantial for viscous systems
such as the Navier--Stokes equations. Discontinuous SBP (D-SBP) instead
couples neighboring elements through SATs (numerical fluxes) and
permits distinct interface traces. Local \(hp\)-adaptation
can produce nonmatching facet nodes for which direct C-SBP assembly is
not available. This motivates combining continuous and discontinuous
coupling, retaining C-SBP wherever suitable while allowing the C/D
partition to change in time. Such hybrid formulations have a substantial
history. Representative examples include the work
of Cangiani \etal \cite{cangiani2013stability}, who coupled continuous
and discontinuous finite-element regions for advection--diffusion--reaction problems. Abdi and Giraldo \cite{abdi2016efficient} recast continuous assembly in element-local storage and selected, at interface nodes, either direct stiffness summation or discontinuous Rusanov-flux coupling; they demonstrated regional mixing for the compressible Euler equations and observed that the
selection could be changed according to the flow regime. Kopera and Giraldo \cite{kopera2015mass}
established mass conservation for unified continuous/discontinuous
element-based Galerkin machinery on dynamically adaptive grids. Related
clustered constructions have been used for Maxwell's equations
\cite{angulo2015nodal}, while Kopriva and Gassner \cite{kopriva2021split}
proved stability, conservation, and constant preservation for a fixed
hybrid CG/DG spectral-element partition of linear hyperbolic systems.
The question addressed in this paper is how to combine this coupling
flexibility with an inexpensive, element-local residual correction for
nonlinear conservation laws, while preserving conservation and entropy stability for dynamic partitioning. 

To apply the same correction in C-SBP and D-SBP regions, we seek an
entropy defect and distribution vector computed entirely from
element-local data. In C-SBP discretizations, each element residual is
evaluated and corrected independently before contributions at shared
nodes are assembled into a global residual, allowing elementwise
parallel evaluation and correction as in D-SBP.
The divergence-form base residual is already conservative, and a correction
with zero conservative moment preserves this property. Existing D-SBP
correction mechanisms can introduce element coupling through either the
entropy defect or its distribution. In the constructions of
\cite{abgrall2018general,abgrall2022reinterpretation,gaburro2023high,mantri2024fully,alberti2024entropy},
the entropy defect contains both volume and SAT contributions and therefore
depends, as originally written, on neighboring traces. For compatible
entropy-conservative facet coupling, however, these trace-dependent
contributions cancel exactly, giving an algebraically equivalent
element-local implementation. In the entropy-correction artificial-viscosity
(ECAV) constructions \cite{chan2025artificial,van2026choice}, the entropy
defect is element-local, but the defect is distributed through viscous
volume and interface operators; the resulting viscous SATs and auxiliary
gradients couple neighboring elements. We examine the relation between these methods and the present
construction in \cref{sec:relation_existing_corrections}.

To design an element-local correction, we separate interface coupling from entropy correction,
leaving all inviscid SATs in the standard divergence-form base residual.
The entropy defect follows the construction of Chan
\cite{chan2025artificial}, while the correction distribution uses a
reference-element graph Laplacian acting on conservative variables,
motivated by the derivative-based corrections of Edoh
\cite{edoh2024conservative}. The correction has zero conservative
moment and requires no auxiliary gradients or correction SATs.
The reference-element distribution matrix can be precomputed;
evaluating the correction requires only local matrix-vector products
and scalar reductions. Neither the entropy defect nor the distribution
depends explicitly on the facet partition. The same correction therefore
applies to a time-dependent, facetwise C/D-SBP partition, with direct
assembly on C-SBP facets and conservative, entropy-stable numerical-flux
coupling on D-SBP facets.

For every fixed partition, we prove conservation and entropy identities
for each element residual, exact global conservation, entropy stability
up to the explicit regularization defect, design-order consistency, and
exact recovery of the pure C-SBP and D-SBP endpoints. Allowing the
partition to change additionally requires a transfer between the
corresponding partially assembled spaces. We transfer the state by a
mass-weighted orthogonal projection onto the space defined by the new
C-SBP facets. We prove that this transfer preserves the global conservative
moments and convex admissible sets, does not increase a convex entropy,
and is error non-amplifying in the broken norm. To complement the theoretical analysis, numerical experiments assess convergence, long-time entropy behavior, and robustness relative to split-form and Hadamard-form two-point-flux baselines. The targeted artificial dissipation of \cite{worku2026targeted} is added separately in the under-resolved tests. The analysis and experiments
are restricted to periodic, conforming inviscid problems; physical
boundaries, nonconforming and viscous extensions, and adaptive mesh
transfer remain future work. 

The remainder of the paper is organized
as follows. \Cref{sec:preliminaries} reviews entropy pairs and SBP notation,
and \cref{sec:correction_coefficients} defines the element-local correction
distribution. \Cref{sec:hybrid_cdsbp} constructs and analyzes the hybrid
method for a fixed C/D partition, while \cref{sec:dynamic_partition}
treats dynamic C/D repartitioning.
\Cref{sec:consistency_residual_correction} establishes design-order
consistency, \cref{sec:relation_existing_corrections} relates the formulation
to existing residual-correction methods, and
\cref{sec:numerical_experiments} presents the numerical results.

\section{Preliminaries}\label{sec:preliminaries}
\subsection{Entropy pairs and residual moments}
Let \(d\) denote the spatial dimension, and let \(n_c\) denote the number of
component variables in a system of partial differential equations (PDEs). On a compact and connected domain
\(\Omega\subset\mathbb{R}^{d}\), we consider
\begin{equation}\label{eq:hyperbolic_pde}
    \pder[\fnb{U}]{t}
    +
    \sum_{i=1}^{d}
    \pder[\fnb{F}^{(i)}(\fnb{U})]{x_i}
    =
    \bm{0},
\end{equation}
with periodic boundary conditions and with an admissible initial condition. The vector or scalar function,
\(\fnb{U}:\Omega\times[0,T]\to\mathbb{R}^{n_c}\), denotes the conservative state with \(n_c\) components, and
\(\fnb{F}^{(i)}:\mathbb{R}^{n_c}\to\mathbb{R}^{n_c}\) is the flux in the
\(x_i\)-direction.

For a strictly convex entropy \(\fn{S}(\fnb{U})\), the entropy variables are
\begin{equation}\label{eq:entropy_variables}
    \fnb{W}^{T}
    \coloneqq
    \pder[\fn{S}]{\fnb{U}}.
\end{equation}
We assume that entropy fluxes \(\fn{G}^{(i)}\) exist such that
\begin{equation}\label{eq:entropy_flux_compatibility}
    \fnb{W}^{T}
    \pder[\fnb{F}^{(i)}]{\fnb{U}}
    =
    \pder[\fn{G}^{(i)}]{\fnb{U}},
    \qquad i=1,\dots,d.
\end{equation}
Thus, smooth solutions satisfy
\begin{equation}\label{eq:entropy_balance_general}
    \pder[\fn{S}]{t}
    +
    \sum_{i=1}^{d}
    \pder[\fn{G}^{(i)}]{x_i}
    =
    0,
\end{equation}
whereas entropy solutions containing shocks satisfy the corresponding inequality in the sense of distributions \cite{dafermos2005hyperbolic}. We also use the entropy potential defined as
\begin{equation}\label{eq:entropy_potential}
    \psi^{(i)}
    \coloneqq
    \fnb{W}^{T}\fnb{F}^{(i)}
    -
    \fn{G}^{(i)},
\end{equation}
for which \(\partial\psi^{(i)}/\partial\fnb{W}=[\fnb{F}^{(i)}]^T\). These continuous identities provide the moments that the residual corrections enforce at the discrete level.

\subsection{Element notation and SBP identities}\label{sec:element_notation}
Let \(\mathcal{T}_{h} \coloneqq \{\{\Omega_k\}_{k=1}^{n_e}:\Omega=\cup_{k=1}^{n_e}\Omega_k\}\) be a conforming, non-overlapping tessellation of \(\Omega\) with \(n_e\) elements. We consider a family \(\{\mathcal{T}_h\}_{h\to0}\) of such tessellations and define the nominal mesh size by
\(
        h
        \coloneqq
        ({|\Omega|}/{n_e})^{1/d},
\)
where \(|\Omega|\) is the volume of the domain. For each element, let
\(h_k\coloneqq\operatorname{diam}(\Omega_k)\).
The boundary of \(\Omega_k\) is denoted by \(\Gamma_k\), and \(\Gamma^I\) denotes the set of interior facets, including periodic facet pairs. Every element \(\Omega_k\) contains \(n_p\) solution nodes, and each facet
\(\gamma\subset\Gamma_k\) contains \(n_\gamma\) facet nodes.  Whenever
element nodes are identified by an assembly map, \(n_g\) denotes the number
of resulting global solution nodes. We use
\(\bm{1}_k\in\mathbb{R}^{n_p}\), \(\bm{1}_\gamma\in
\mathbb{R}^{n_\gamma}\), \(\bm{1}_g\in\mathbb{R}^{n_g}\), and \(\I_m\in\mathbb{R}^{m\times m}\) for the
corresponding vectors of ones and identity matrix. The symbol
\(\bm{0}\) denotes a vector or matrix consisting of all zeros, with its size
clear from context. A nodal state vector on
\(\Omega_k\) is denoted by a bold small letter, \eg, \(\bm u_k\in\mathbb{R}^{n_c n_p}\).
A collection of element-local state vectors is called a broken state
when separate nodal copies are retained on adjacent elements, thereby
permitting discontinuities across element interfaces. If the corresponding
traces agree across every interface, the state will be referred to as
continuous. If \(\mathsf{X}\) acts on scalar nodal values, then
\(\overline{\mathsf{X}}\coloneqq\mathsf{X}\otimes\I_{n_c}\) denotes its componentwise extension to systems, where \(\otimes\) denotes the Kronecker product. Constant states are injected by
\begin{equation}\label{eq:identity_matrices}
    \overline{\I}_k
    \coloneqq
    \bm{1}_{k}\otimes\I_{n_c},
    \qquad
    \overline{\I}_g
    \coloneqq
    \bm{1}_{g}\otimes\I_{n_c},
\end{equation}
that is, \(\overline{\I}_k\bm c\) repeats a constant state
\(\bm c\in\mathbb{R}^{n_c}\) at every node of \(\Omega_k\). On a facet, we write \(\overline{\I}_{\gamma}\coloneqq\bm{1}_{\gamma}\otimes\I_{n_c}\). The vector \(\overline{\I}_g\bm c\) is the corresponding constant state in any assembled hybrid space.

On each physical element, the degree-\(p\) SBP derivative operator in the \(x_i\)-direction, denoted by \(\D_{x_i k}\), satisfies the identities
\begin{equation}\label{eq:local_sbp_operator}
    \D_{x_i k}
    =
    \H_k^{-1}\Q_{x_i k},
    \quad
    \Q_{x_i k}
    =
    \S_{x_i k}
    +
    \frac{1}{2}\E_{x_i k},
    \quad
    \S_{x_i k}
    =
    -\S_{x_i k}^{T},
    \quad
    \E_{x_i k}
    =
    \E_{x_i k}^{T}.
\end{equation}
Equivalently, we have \(\Q_{x_i k}+\Q_{x_i k}^{T}=\E_{x_i k}\), which is referred to as the SBP property. The matrix \(\H_k\) is a symmetric positive definite norm matrix, and in this paper, we only consider operators with diagonal-norm matrices. Writing
\[
        \H_k
        =
        \operatorname{diag}(w_{k,1},\ldots,w_{k,n_p}),
        \qquad
        w_{k,m}>0,
\]
we impose two assumptions on the mesh family. First, we assume shape
regularity. Specifically, let \(\fn{M}_k:\hat\Omega\to\Omega_k\) be a smooth invertible mapping from a fixed reference element, \(\hat{\Omega}\), let \(\mathsf{J}_k(\bm\xi)\coloneqq \partial\fn{M}_k(\bm\xi)/\partial\bm\xi\) denote its Jacobian matrix, and let \(\sigma_{\min}(\mathsf{J}_k(\bm\xi))\) and \(\sigma_{\max}(\mathsf{J}_k(\bm\xi))\) denote its smallest and largest singular values, respectively. Then, there exist constants \(c_J,C_J>0\), independent of \(k\) and \(h\), such that
\[
        c_Jh_k
        \le
        \sigma_{\min}\!\left(\mathsf{J}_k(\bm\xi)\right)
        \le
        \sigma_{\max}\!\left(\mathsf{J}_k(\bm\xi)\right)
        \le
        C_Jh_k,
        \qquad
        \bm\xi\in\hat\Omega.
\]
For the mapped diagonal-norm SBP operators considered here, the
physical and reference quadrature weights are related by
\(w_{k,m}=\hat w_m|\det(\mathsf{J}_k(\bm\xi_m))|\), where the reference
weights \(\hat w_m>0\) are fixed. The shape-regularity assumption therefore
gives constants \(c_s,C_s>0\), independent of \(k\), \(h\), and \(m\), such
that
\[
        c_s h_k^d
        \le
        w_{k,m}
        \le
        C_s h_k^d.
\]
Second, we assume that the mesh family is quasi-uniform in the sense
that there exist constants \(c_H,C_H>0\), independent of \(k\), \(h\), and
\(m\), such that
\begin{equation}\label{eq:quasi_uniformity}
        c_Hh^d
        \le
        w_{k,m}
        \le
        C_Hh^d,
        \qquad
        m\in\{1,\ldots,n_p\}.
\end{equation}
Comparing the two weight estimates gives
\begin{equation}\label{eq:local_global_mesh_size}
        c_h h
        \le
        h_k
        \le
        C_h h,
\end{equation}
where \(c_h,C_h>0\) are independent of \(k\) and \(h\).

The boundary matrix can be decomposed as
\begin{equation}\label{eq:local_sbp_boundary}
    \E_{x_i k}
    =
    \sum_{\gamma\subset\Gamma_k}
    \R_{\gamma k}^{T}
    \B_{\gamma}
    \N_{x_i\gamma k}
    \R_{\gamma k},
\end{equation}
where \(\R_{\gamma k}\) extrapolates the solution from the volume nodes to
the facet nodes, \(\B_\gamma\) contains the reference facet quadrature
weights, and \(\N_{x_i\gamma k}\) contains the surface-scaled outward normal
component in the \(x_i\)-direction. We assume exact restriction of constants, \ie,
\begin{equation}\label{eq:facet_constant_restriction}
    \R_{\gamma k}\bm1_k=\bm1_\gamma.
\end{equation}
In this work, we exclusively consider diagonal-norm, diagonal-\(\E\) SBP operators, for which \(\R_{\gamma k}\) is a Boolean nodal extraction because each facet node is collocated with a volume node. Consequently, restriction commutes with evaluation of every pointwise state function. Other C-SBP-compatible operators include those with dense boundary matrices on shared face nodes \cite{hicken2016multidimensional}. An entropy-stable extension to nonmatching or noncollocated traces requires compatible projection or mortar operators and is outside the present analysis.

A hybrid C/D-SBP discretization is specified by a partition of the interior facets,
\begin{equation}\label{eq:cdsbp_interface_partition}
    \Gamma^I
    =
    \Gamma_{\rm C}^I
    \cup
    \Gamma_{\rm D}^I,
\end{equation}
where facets in \(\Gamma_{\rm C}^I\) are coupled by strong assembly and
facets in \(\Gamma_{\rm D}^I\) are coupled weakly using SATs. Periodic facet pairs are included in \(\Gamma^I\). For a partition \(\chi\), let
\[
    \Gamma_{{\rm C},k}^{\chi}
    \coloneqq
    \{\gamma\subset\Gamma_k:\gamma\in\Gamma_{\rm C}^I\},
    \qquad
    \Gamma_{{\rm D},k}^{\chi}
    \coloneqq
    \{\gamma\subset\Gamma_k:\gamma\in\Gamma_{\rm D}^I\}.
\]
Matching element-local nodal copies are treated as one on C-SBP facets, while they are treated as separate entities on D-SBP facets. All copies connected by one or
more C-SBP facets are assigned the same global solution node. Let \(n_g^\chi\)
denote the number of resulting global solution nodes and let
\(\P_k^\chi\in\mathbb R^{n_p\times n_g^\chi}\) restrict a partially assembled
vector to \(\Omega_k\). Every row of \(\P_k^\chi\) has one nonzero entry equal
to one, and
\begin{equation}\label{eq:cdsbp_partial_assembly_map}
    \P_k^\chi\bm1_g^\chi=\bm1_k,
    \qquad
    \bm u_k=\overline{\P}_k^\chi\bm u_g^\chi.
\end{equation}
For every \(\gamma\in\Gamma^I\), the surface metrics of the adjacent elements
\(\Omega_k\) and \(\Omega_v\) satisfy
\begin{equation}\label{eq:cdsbp_paired_metric_compatibility}
    \N_{x_i\gamma v}=-\N_{x_i\gamma k},
    \qquad i=1,\ldots,d.
\end{equation}
This paired discrete-normal identity includes the periodic facet pairs. On a C-SBP facet, the traces also satisfy
\begin{equation}\label{eq:cdsbp_continuous_trace}
    \overline{\R}_{\gamma k}\overline{\P}_k^\chi\bm u_g^\chi
    =
    \overline{\R}_{\gamma v}\overline{\P}_v^\chi\bm u_g^\chi,
    \qquad
    \gamma=\Gamma_k\cap\Gamma_v\in\Gamma_{\rm C}^I.
\end{equation}
No such equality is imposed on a D-SBP facet. The partially assembled norm is
\begin{equation}\label{eq:cdsbp_assembled_norm}
    \H_g^\chi
    \coloneqq
    \sum_{\Omega_k\in\mathcal T_h}
    (\P_k^\chi)^T\H_k\P_k^\chi.
\end{equation}
It is diagonal and positive definite.  The all-continuous partition gives the
usual C-SBP assembly, while the all-discontinuous partition assigns a separate
global solution node to every element-local nodal copy and therefore gives the
block-diagonal broken norm.

For sufficiently smooth functions, the derivative operator, \(\D_{x_i k}\), satisfies
\begin{equation}\label{eq:D_accuracy}
    \left[\D_{x_i k}\bm v_k\right]_m
    =
    \pder[\fn{V}]{x_i}(\bm{x}_{k,m})
    +
    \fn{O}(h^p), \qquad \forall \,\fn{V}\in C^{p+1},
\end{equation}
where \(\bm v_k\) denotes the nodal restriction of \(\fn{V}\) on \(\Omega_k\) \cite{crean2018entropy}. The notation \(\fn{O}(h^p)\) is interpreted componentwise for vector-valued quantities. The reference volume
and facet quadrature rules underlying \(\H_k\) and \(\B_\gamma\) are exact
for polynomials of degrees at least \(2p-1\) and \(2p\), respectively. For
a prescribed set of volume nodes, the existence of a positive-weight
quadrature rule of degree at least \(2p-1\) is necessary for
the existence of a degree-\(p\) diagonal-norm SBP operator
\cite{hicken2013summation}. For the collocated operators considered here, \(\R_{\gamma k}\)
extracts the facet nodal values exactly.

For the all-continuous partition, we suppress the superscript \(\chi\) and
define the assembled operators
\begin{equation}\label{eq:csbp_oper_def}
    \Q_{x_i}
    \coloneqq
    \sum_{\Omega_k\in\mathcal{T}_h}\P_k^T\Q_{x_i k}\P_k,
    \qquad
    \D_{x_i}
    \coloneqq
    \H_g^{-1}\Q_{x_i},
\end{equation}
where \(\H_g\) is the all-continuous specialization of
\cref{eq:cdsbp_assembled_norm}. Further details on C-SBP assembly and general
background on multidimensional SBP operators can be found in
\cite{hicken2020entropy,hicken2016multidimensional,fernandez2014review,svard2014review,crean2018entropy}.

\section{Correction distribution vector}\label{sec:correction_coefficients}
We seek a distribution vector whose conservative moment vanishes and whose
entropy moment is one before regularization. To construct it, we choose an
element-local vector \(\bm a_k\) such that
\[
    \overline{\I}_k^T\bm a_k=\bm0,
    \qquad
    b_k\coloneqq\bm w_k^T\bm a_k>0
\]
for every nonconstant admissible nodal state. Then, we define the unregularized distribution vector as
\begin{equation}
    \bm\alpha_k \coloneqq \frac{\bm a_k}{b_k},
\end{equation}
which satisfies the required moments. The choice of
\(\bm a_k\) determines how the correction is distributed among the nodes of element \(\Omega_k\).

Let \(\bm u_k\) and \(\bm w_k\) contain the nodal conservative and entropy
variables on \(\Omega_k\), respectively, and define the element volume by
\begin{equation}\label{eq:V_k}
    V_k
    \coloneqq
    \bm{1}_k^T\H_k\bm{1}_k.
\end{equation}
Several existing choices motivate our distribution. Abgrall
\cite{abgrall2018general} used the arithmetic mean of the entropy variables;
Abgrall, {\"O}ffner, and Ranocha (AOR)
\cite{abgrall2022reinterpretation} used a mass-weighted mean. For diagonal
\(\H_k\), their nodal and mass-matrix-multiplied distribution matrices,
respectively, are
\begin{equation}\label{eq:mean_subtracted_distribution_matrices}
    \mathsf L^{\rm A}
    \coloneqq
    \I_{n_p}-\frac{\bm1_k\bm1_k^T}{n_p},
    \qquad
    \mathsf L_k^{\rm AOR}
    \coloneqq
    \H_k-\frac{\H_k\bm1_k\bm1_k^T\H_k}{V_k}.
\end{equation}
Both operators act on the nodal entropy variables and are complete-graph
Laplacians: every pair of element nodes is coupled, even if the SBP
derivative matrices are sparse. These distributions can nevertheless be applied in
\(\fn{O}(n_p)\) by computing the element mean once and performing
nodal operations, without forming the dense matrices. The ECAV distribution mechanism of
\cite{chan2025artificial} instead uses viscous volume and SAT terms that
couple neighboring elements. For comparison, Chan \cite[Sec.~5]{chan2025artificial}
also studies two element-local corrections weighted by
\(\partial\fnb U/\partial\fnb W\) evaluated at the element-mean state.
The mean-subtracted option couples every pair of element nodes, as in
\cref{eq:mean_subtracted_distribution_matrices}, and produced nonphysical
artifacts in a high-order two-dimensional Riemann problem; the derivative-based
option did not stabilize that test. 

These considerations motivate an element-local distribution acting on conservative variables. The construction proceeds in three steps:
\begin{enumerate}
    \item form a precomputable matrix from reference-element SBP derivatives;
    \item use this matrix to construct a graph Laplacian acting on
        conservative variables, avoiding differentiation of entropy variables
        and ensuring a positive entropy moment for nonconstant admissible
        states;
    \item regularize the volume-normalized entropy moment to handle
        constant and nearly constant states, while preserving the zero
        conservative moment and explicitly accounting for the resulting
        regularization defect.
\end{enumerate}

To form the precomputable matrix, let \(\D_{\xi_i}\) and \(\H_\xi\) be the scalar SBP derivative and norm matrices on the reference element. With
\(\hat V\coloneqq\bm1_k^T\H_\xi\bm1_k\), define
\begin{equation}\label{eq:scaled_reference_distribution_matrix}
    \hat{\M}
    \coloneqq
    \sum_{i=1}^{d}
    \D_{\xi_i}^{T}\H_{\xi}\D_{\xi_i},
    \qquad
    \mu_k
    \coloneqq
    \left(\frac{V_k}{\hat V}\right)^{\frac{d-2}{d}},
\end{equation}
where \(\hat{\M}\) can be precomputed as all its components are reference matrices. Applying it directly to conservative variables gives an element-local \(\bm a_k\) vector, but its positive semidefiniteness does not ensure a positive mixed moment \(b_k\). Applying it to entropy variables gives
a nonnegative quadratic moment, but differentiates those variables. For the
Euler equations, entropy variables can become large near low pressure, which
may produce large nodal corrections; related difficulties with
entropy-variable-jump dissipation have been reported in
\cite{derigs2017novel}. A conservative-variable diffusion based on entropy
variables may also require the state-dependent matrix
\(\partial\fnb U/\partial\fnb W\), whose nodal evaluation adds work. We
therefore use \(\hat{\M}\) to define nonnegative symmetric edge weights
\(\ell_{mn}\coloneqq|[\hat{\M}]_{mn}|\) for \(m\ne n\). The resulting graph
Laplacian is
\begin{equation}\label{eq:reference_distribution_graph}
    [\hat{\mathsf L}]_{mn}
    \coloneqq-\ell_{mn}\quad(m\ne n),
    \qquad
    [\hat{\mathsf L}]_{mm}
    \coloneqq\sum_{n\ne m}\ell_{mn}.
\end{equation}
Thus, \(\hat{\mathsf L}\bm1_k=\bm0\), and its off-diagonal sparsity pattern
is that of \(\hat{\M}\). We use reference operators for which the graph
with edges \(\ell_{mn}>0\) is connected; connectivity is checked when
\(\hat{\mathsf L}\) is constructed. The same fixed reference operator is used
on every element. By \cref{eq:quasi_uniformity,eq:V_k},
\(n_pc_Hh^d\le V_k\le n_pC_Hh^d\), so
\begin{equation}\label{eq:mu_scaling}
        c_\mu h^{d-2}
        \le
        \mu_k
        \le
        C_\mu h^{d-2}.
\end{equation}
The factor \(\mu_k\) gives the graph distribution the same mesh-size
scaling as \(\hat{\M}\) mapped to a physical element. We define
\begin{equation}\label{eq:coefficient_ab}
    \bm{a}_k
    \coloneqq
    \mu_k(\hat{\mathsf L}\otimes\I_{n_c})\bm{u}_k,
    \qquad
    b_k
    \coloneqq
    \bm{w}_k^T\bm{a}_k.
\end{equation}
The graph Laplacian is symmetric with zero row sums, so
\(\overline{\I}_k^T\bm a_k=\bm0\). Its edge weights also give the
following important property that \(b_k\) must satisfy.

\begin{proposition}\label{lem:graph_distribution_positive}
Suppose the admissible nodal states lie in a convex set on which the entropy
is twice differentiable and strictly convex. For the connected graph in
\cref{eq:reference_distribution_graph},
\(b_k>0\) if \(\bm u_k\) is nonconstant, and \(b_k=0\) if
\(\bm u_k\) is constant.
\end{proposition}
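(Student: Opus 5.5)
The plan is to write \(b_k\) as a sum over the edges of the graph, with each edge contributing a nonnegative term. Write \(\bm u_{k,m}\in\mathbb R^{n_c}\) and \(\bm w_{k,m}=\fnb W(\bm u_{k,m})\) for the nodal blocks. Since \(\hat{\mathsf L}\) is symmetric with zero row sums, the standard graph-Laplacian identity gives
\[
    b_k
    =
    \mu_k\sum_{m}\bm w_{k,m}^T\sum_{n}[\hat{\mathsf L}]_{mn}\bm u_{k,n}
    =
    \mu_k\sum_{m<n}\ell_{mn}
    \left(\bm w_{k,m}-\bm w_{k,n}\right)^T
    \left(\bm u_{k,m}-\bm u_{k,n}\right).
\]
This follows by substituting \([\hat{\mathsf L}]_{mm}=\sum_{n\ne m}\ell_{mn}\), pairing the \((m,n)\) and \((n,m)\) contributions, and using \(\ell_{mn}=\ell_{nm}\). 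Note that \(\mu_k>0\) by \cref{eq:mu_scaling}.

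The second step is to show that each edge term is nonnegative, and strictly positive when \(\bm u_{k,m}\ne\bm u_{k,n}\). The admissible set is convex, so the segment \(\bm u(s)=\bm u_{k,n}+s(\bm u_{k,m}-\bm u_{k,n})\), \(s\in[0,1]\), stays admissible. Setting \(\bm\delta\coloneqq\bm u_{k,m}-\bm u_{k,n}\), the mean value theorem in integral form, with \(\fnb W=(\partial\fn S/\partial\fnb U)^T\), gives
\[
    (\bm w_{k,m}-\bm w_{k,n})^T\bm\delta
    =
    \int_0^1 \bm\delta^T\,\pder[^2\fn S]{\fnb U^2}(\bm u(s))\,\bm\delta\,{\rm d}s .
\]
This is the only delicate point. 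Strict convexity of a \(C^2\) function does not give a positive definite Hessian everywhere, only a positive semidefinite one. So I would argue directly instead: the map \(g(s)=\fn S(\bm u(s))\) is strictly convex on \([0,1]\), which makes \(g'\) strictly increasing. Hence \(g'(1)-g'(0)>0\), and this difference is exactly the edge term above. This argument needs only differentiability and strict convexity, in fact only strict monotonicity of the gradient.

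For the conclusion, suppose first that \(\bm u_k\) is constant. Then every difference vanishes, so \(b_k=0\); equivalently, \(\hat{\mathsf L}\bm1_k=\bm0\) already gives \(\bm a_k=\bm0\). Now suppose \(\bm u_k\) is nonconstant. Then there are nodes \(m_0,n_0\) with \(\bm u_{k,m_0}\ne\bm u_{k,n_0}\). Because the graph is connected, some path joins \(m_0\) to \(n_0\) through edges with \(\ell_{mn}>0\). If the states were equal across every edge of this path, they would be equal at \(m_0\) and \(n_0\), so some edge on the path with \(\ell_{mn}>0\) has unequal endpoint states. That edge contributes a strictly positive term, all other terms are nonnegative, and \(\mu_k>0\), so \(b_k>0\).
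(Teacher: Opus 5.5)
Your proposal is correct and follows the paper's proof: it uses the same edge-sum expansion of $b_k$ via the symmetric, zero-row-sum graph Laplacian, the same positivity of each edge term from strict convexity along the segment joining two nodal states, and the same constant/nonconstant conclusion. Two of your steps are tighter than the paper's terse versions: the strict monotonicity of $g'(s)$ replaces the Hessian-integral justification and needs only differentiability, and the explicit path argument through edges with $\ell_{mn}>0$ spells out the connectivity step the paper leaves implicit when it says the state is constant on every node.
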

\begin{proof}
Using symmetry of the edge weights and applying direct algebra, one can show
\begin{equation}\label{eq:b_k_expanded}
    b_k
    =
    \mu_k\sum_{m<n}\ell_{mn}
    (\bm w_{k,m}-\bm w_{k,n})^T
    (\bm u_{k,m}-\bm u_{k,n}).
\end{equation}
For \(\Delta\bm u_{mn}\coloneqq\bm u_{k,m}-\bm u_{k,n}\), the
entropy-variable difference satisfies
\[
    (\bm w_{k,m}-\bm w_{k,n})^T\Delta\bm u_{mn}
    =
    \int_0^1
    \Delta\bm u_{mn}^T
    \fn S''(\bm u_{k,n}+s\Delta\bm u_{mn})
    \Delta\bm u_{mn}\,{\rm d}s,
\]
where \(\fn{S}'' = \partial\fnb{W}/\partial{\fnb{U}}\). Strict convexity makes this expression positive when
\(\Delta\bm u_{mn}\ne\bm0\). Thus, if \(b_k=0\), the state is constant on every node; the converse is also true due to the structure of \cref{eq:b_k_expanded}. Hence, it follows that if \(\bm u_k\) is nonconstant then \(b_k>0\). 
\end{proof}

This removes exact cancellation of \(b_k\) for a nonconstant state.
Nearly constant states can still have arbitrarily small \(b_k\), so the
correction distribution vector used in the schemes is regularized as follows.

\begin{equation}\label{eq:alpha_regularized}
    \bm{\alpha}_{k,\epsilon}
    \coloneqq
    \frac{b_k\bm{a}_k}{b_k^2+\epsilon_{\rm mach}V_k^2}
    =
    \frac{(b_k/V_k)(\bm a_k/V_k)}
    {(b_k/V_k)^2+\epsilon_{\rm mach}},
    \qquad
    \epsilon_{\rm mach}>0,
\end{equation}
where \(\epsilon_{\rm mach}\) denotes machine precision. The second form
shows that the regularization is applied to the volume-normalized entropy
moment \(b_k/V_k\). Since \(b_k\) has the scaling of an element-integrated
quantity, it decreases with the element volume for smooth states; division by
\(V_k\) prevents this mesh-size dependence alone from making
\(\epsilon_{\rm mach}\) dominate the normalization. Since
\(\overline{\I}_k^T\bm a_k=\bm0\), the correction distribution vector keeps
the conservative moment exact; however, it produces an entropy moment defect,
\ie,
\begin{equation}\label{eq:alpha_moments}
    \overline{\I}_k^T\bm{\alpha}_{k,\epsilon}
    =
    \bm{0},
    \qquad
    \bm{w}_k^T\bm{\alpha}_{k,\epsilon}
    =
    1-\eta_k,
    \qquad
    \eta_k
    \coloneqq
    \frac{\epsilon_{\rm mach}V_k^2}
    {b_k^2+\epsilon_{\rm mach}V_k^2}.
\end{equation}
When \(b_k\ne0\), the unregularized vector \(\bm a_k/b_k\)
has zero conservative moment and unit entropy moment. The bounded
regularization factor approaches \(1/b_k\) when
\(b_k^2\gg\epsilon_{\rm mach}V_k^2\) and vanishes at \(b_k=0\).
Regularization preserves the conservative moment and supplies the
entropy moment \((1-\eta_k)\delta_k\), where \(1-\eta_k\in[0,1]\);
thus, \(\eta_k\) measures the fraction left uncorrected.
Its effect on the entropy balance is made explicit in
\cref{sec:hybrid_cdsbp}.

\begin{remark}
    Residual corrections may also be designed to enforce conservation, kinetic-energy preservation, or other moment constraints. Such additional corrections are not considered in this paper.
\end{remark}

\section{Hybrid continuous--discontinuous SBP residual correction}
\label{sec:hybrid_cdsbp}
\label{sec:csbp_correction}
\label{sec:dsbp_correction}

We now construct a residual-correction discretization in which strong C-SBP
assembly and weak D-SBP coupling coexist on the same conforming mesh. The
facet partition affects only the partial assembly map and the set of facets on
which SATs are evaluated. The element entropy defect and correction
distribution have no explicit dependence on the facet classification when
evaluated at a fixed broken state.

Let \(\bm f_k^{(i)}\), \(\bm g_k^{(i)}\), and \(\bm\psi_k^{(i)}\) collect the
nodal values of \(\fnb F^{(i)}\), \(\fn G^{(i)}\), and \(\psi^{(i)}\) on
\(\Omega_k\), respectively. For an element facet, define the physical normal
flux, entropy flux, and entropy potential by
\begin{align}
    \bm f_{\gamma k}^{n}
    &\coloneqq
    \sum_{i=1}^{d}
    \overline{\N}_{x_i\gamma k}
    \overline{\R}_{\gamma k}\bm f_k^{(i)},
    \quad
    \bm g_{\gamma k}^{n}
    \coloneqq
    \sum_{i=1}^{d}
    \N_{x_i\gamma k}\R_{\gamma k}\bm g_k^{(i)},
    \quad
    \bm\psi_{\gamma k}^{n}
    \coloneqq
    \sum_{i=1}^{d}
    \N_{x_i\gamma k}\R_{\gamma k}\bm\psi_k^{(i)}.
    \label{eq:cdsbp_normal_fluxes}
\end{align}
For a facet \(\gamma=\Gamma_k\cap\Gamma_v\in\Gamma_{\rm D}^I\), we set
\(\bm u_{\gamma s}\coloneqq\overline{\R}_{\gamma s}\bm u_s\),
\(\bm f_{\gamma s}^{(i)}\coloneqq
\overline{\R}_{\gamma s}\bm f_s^{(i)}\), and
\(\bm w_{\gamma s}\coloneqq\overline{\R}_{\gamma s}\bm w_s\).  A
conservative numerical flux, written using the outward normal of
\(\Omega_k\), satisfies
\begin{equation}\label{eq:cdsbp_conservative_flux}
    \bm f_{\gamma v}^{*}=-\bm f_{\gamma k}^{*}.
\end{equation}
The local Lax--Friedrichs (LLF) flux used in the numerical experiments is
\begin{equation}\label{eq:new_lf_flux}
    \bm f_{\gamma k}^{\mathrm{LF}}
    \coloneqq
    \frac{1}{2}
    \sum_{i=1}^{d}
    \overline{\N}_{x_i\gamma k}
    \left(\bm f_{\gamma k}^{(i)}+\bm f_{\gamma v}^{(i)}\right)
    +
    \frac{1}{2}\Lambda_{\gamma k}
    \left(\bm u_{\gamma k}-\bm u_{\gamma v}\right),
\end{equation}
where \(\Lambda_{\gamma k}=\Lambda_{\gamma v}\) is block diagonal.  With
\[
    \bm n_{\gamma k}^{(m)}
    \coloneqq
    \left(
        [\N_{x_1\gamma k}]_{mm},\ldots,
        [\N_{x_d\gamma k}]_{mm}
    \right)^T,
\]
its \(m\)-th diagonal block is
\[
    [\Lambda_{\gamma k}]_{mm}
    =
    \|\bm n_{\gamma k}^{(m)}\|_2
    \lambda_{\max,\gamma}^{(m)}\I_{n_c}.
\]
For the Euler equations, \(\lambda_{\max,\gamma}^{(m)}\) is the
Guermond--Popov non-iterative upper bound for the corresponding unit-normal
Riemann problem \cite{guermond2016fast}.  Unlike the less expensive Davis
estimate \cite{davis1988simplified}, this choice is guaranteed to bound the
maximum absolute wave speed for ideal gases with
\(1<\gamma_{\rm gas}\le5/3\). More generally, we assume that the selected numerical flux satisfies the facetwise Tadmor condition 
\begin{equation}\label{eq:cdsbp_entropy_condition}
    \varepsilon_*^\gamma
    \coloneqq
    (\bm w_{\gamma k}-\bm w_{\gamma v})^T
    \overline{\B}_\gamma\bm f_{\gamma k}^{*}
    -
    \bm1_\gamma^T\B_\gamma
    (\bm\psi_{\gamma k}^{n}+\bm\psi_{\gamma v}^{n})
    \ge0.
\end{equation}
For LLF, this condition follows when the wave-speed estimate is an upper
bound \cite{chen2017entropy}.

For a fixed partition \(\chi\), let \(\bm u_g^\chi\) and
\(\bm w_g^\chi\) denote the partially assembled conservative and entropy
variable vectors, respectively, and let
\(\bm s_g^\chi\in\mathbb R^{n_g^\chi}\) collect the corresponding nodal
entropy values, \([\bm s_g^\chi]_j\coloneqq
\fn S([\bm u_g^\chi]_j)\). We set
\(\bm u_k=\overline{\P}_k^\chi\bm u_g^\chi\) and
\(\bm w_k=\overline{\P}_k^\chi\bm w_g^\chi\). The hybrid
residual-correction discretization is
\begin{align}
    \der[\bm u_g^\chi]{t}
    & =
    (\overline{\H}_g^\chi)^{-1}
    \sum_{\Omega_k\in\mathcal T_h}
    (\overline{\P}_k^\chi)^T
    \widetilde{\bm r}_{k,\epsilon}^{\chi},
    \label{eq:cdsbp_scheme}\\
    \widetilde{\bm r}_{k,\epsilon}^{\chi}
    &\coloneqq
    \bm q_k^\chi+\bm c_{k,\epsilon}-\bm d_{k,\epsilon},
    \label{eq:cdsbp_corrected_residual}\\
    \bm q_k^\chi
    &\coloneqq
    -\sum_{i=1}^{d}\overline{\Q}_{x_i k}\bm f_k^{(i)}
    +
    \sum_{\gamma\in\Gamma_{{\rm D},k}^{\chi}}
    \overline{\R}_{\gamma k}^T\overline{\B}_\gamma
    (\bm f_{\gamma k}^{n}-\bm f_{\gamma k}^{*}),
    \label{eq:cdsbp_base_residual}\\
    \delta_k
    &\coloneqq
    -\sum_{\gamma\subset\Gamma_k}
    \bm1_\gamma^T\B_\gamma\bm g_{\gamma k}^{n}
    +
    \sum_{i=1}^{d}\bm w_k^T\overline{\Q}_{x_i k}\bm f_k^{(i)},
    \label{eq:cdsbp_entropy_defect}\\
    \sigma_k^{\rm ec}
    &\coloneqq1,
    \qquad
    \sigma_k^{\rm ed}
    \coloneqq(1-\operatorname{sgn}(\delta_k))/2,
    \qquad
    \sigma_k\in\{\sigma_k^{\rm ec},\sigma_k^{\rm ed}\},
    \label{eq:cdsbp_correction_mode}\\
    \bm c_{k,\epsilon}
    &\coloneqq
    \sigma_k\bm\alpha_{k,\epsilon}\delta_k,
    \label{eq:cdsbp_correction}\\
    \bm d_{k,\epsilon}
    &\coloneqq
    \bm\alpha_{k,\epsilon}\varepsilon_k,
    \qquad \varepsilon_k\ge0,
    \label{eq:cdsbp_dissipation}
\end{align}
where \(\bm q_k^\chi\), \(\bm c_{k,\epsilon}\), and
\(\bm d_{k,\epsilon}\) are, respectively, the mass-matrix-multiplied base
residual, correction, and dissipation vectors. The entropy defect in
\cref{eq:cdsbp_entropy_defect} and the vector \(\bm\alpha_{k,\epsilon}\) in
\cref{eq:alpha_regularized} are evaluated from element-local data and have no
explicit dependence on \(\chi\), although a D-to-C transfer may change their
values through the transferred state \(\bm u_k\). To establish conservation and entropy
stability, we use the following properties of the correction and dissipation
vectors:
\begin{align}
    \overline{\I}_k^T\bm c_{k,\epsilon}
    &=\bm0,
    &
    \overline{\I}_k^T\bm d_{k,\epsilon}
    &=\bm0,
    \label{eq:cdsbp_conservative_moments}\\
    \bm w_k^T\bm c_{k,\epsilon}
    &=\sigma_k(1-\eta_k)\delta_k,
    &
    \bm w_k^T\bm d_{k,\epsilon}
    &=(1-\eta_k)\varepsilon_k\ge0,
    \label{eq:cdsbp_entropy_moments}
\end{align}
which follow directly from \cref{eq:alpha_moments}.

\begin{remark}\label{rem:cdsbp_independent_ad}
For simplicity, the formulation and exact entropy identities below use
the same distribution vector for the correction and dissipation.
Alternatively, the dissipation distribution may be constructed by
replacing \(\hat{\mathsf L}\) with \(\hat{\M}\) in
\cref{eq:coefficient_ab} and evaluating its entropy moment and regularized
normalization in \cref{eq:alpha_regularized} independently. The correction
and its regularization factor \(\eta_k\) remain unchanged. This
derivative-based choice is the default in \cite{worku2026targeted}
and is used for AD in the numerical experiments. Both choices satisfy
\(\overline{\I}_k^T\bm d_{k,\epsilon}=\bm0\) and
\(\bm w_k^T\bm d_{k,\epsilon}\ge0\), so the conservation statements and
entropy upper bounds established below hold for either choice.
The two choices exhibit a problem-dependent trade-off between resolution
and robustness \cite{worku2026targeted}. Under the stated convexity,
admissibility, and graph-connectivity assumptions, the graph distribution
additionally ensures that its entropy moment vanishes only for constant states.
An optional safeguard combines the two choices by using the
derivative-based distribution unless
\((b_k^{\rm d})^2<C_b\epsilon_{\rm mach}V_k^2\), where
\(b_k^{\rm d}\) denotes its entropy moment and \(C_b>1\).
In that case, the graph-Laplacian distribution is used, with its
corresponding entropy moment and regularized normalization.
This safeguard is not used in the numerical experiments reported here.
\end{remark}

\begin{theorem}\label{thm:cdsbp_balance}
Consider the hybrid discretization in \cref{eq:cdsbp_scheme} on a periodic
conforming mesh. Assume that the numerical flux is conservative and satisfies
\cref{eq:cdsbp_entropy_condition} on every D-SBP facet. For either correction
mode, the regularized element residual satisfies the conservation identity
\begin{equation}\label{eq:cdsbp_element_conservation}
    \overline{\I}_k^T\widetilde{\bm r}_{k,\epsilon}^{\chi}
    =
    -\sum_{\gamma\in\Gamma_{{\rm C},k}^{\chi}}
    \overline{\I}_\gamma^T\overline{\B}_\gamma\bm f_{\gamma k}^{n}
    -\sum_{\gamma\in\Gamma_{{\rm D},k}^{\chi}}
    \overline{\I}_\gamma^T\overline{\B}_\gamma\bm f_{\gamma k}^{*},
\end{equation}
and the entropy identity
\begin{align}
    \bm w_k^T\widetilde{\bm r}_{k,\epsilon}^{\chi}
    ={}&
    -\sum_{\gamma\in\Gamma_{{\rm C},k}^{\chi}}
    \bm1_\gamma^T\B_\gamma\bm g_{\gamma k}^{n}
    +
    \sum_{\gamma\in\Gamma_{{\rm D},k}^{\chi}}
    \left(
        \bm1_\gamma^T\B_\gamma\bm\psi_{\gamma k}^{n}
        -\bm w_{\gamma k}^T\overline{\B}_\gamma
         \bm f_{\gamma k}^{*}
    \right)
    \nonumber\\
    &-
    [1-\sigma_k(1-\eta_k)]\delta_k
    -(1-\eta_k)\varepsilon_k.
    \label{eq:cdsbp_element_entropy}
\end{align}
Defining \(\overline{\I}_g^\chi\coloneqq\bm1_g^\chi\otimes\I_{n_c}\), the hybrid discretization is globally conservative,
\begin{equation}\label{eq:cdsbp_global_conservation}
    (\overline{\I}_g^\chi)^T\overline{\H}_g^\chi
    \der[\bm u_g^\chi]{t}=\bm0,
\end{equation}
and its entropy rate satisfies
\begin{align}
    (\bm1_g^\chi)^T\H_g^\chi
    \der[\bm s_g^\chi]{t}
    ={}&
    -\sum_{\gamma\in\Gamma_{\rm D}^I}\varepsilon_*^\gamma
    -\sum_{\Omega_k\in\mathcal T_h}
    [1-\sigma_k(1-\eta_k)]\delta_k
    -
    \sum_{\Omega_k\in\mathcal T_h}
    (1-\eta_k)\varepsilon_k
    \nonumber\\
    \le{}&
    -\!\!\sum_{\Omega_k:\,\delta_k<0}\eta_k\delta_k.
    \label{eq:cdsbp_global_entropy}
\end{align}
Thus, for every fixed partition, the method is exactly conservative and
entropy stable up to the same explicit regularization defect as its pure
C-SBP and D-SBP endpoints.
\end{theorem}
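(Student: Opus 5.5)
The plan is to prove the two element identities first, purely algebraically from the SBP structure and the moment properties \cref{eq:cdsbp_conservative_moments,eq:cdsbp_entropy_moments}. Then I sum over elements, using the partial assembly map and the paired facet relations to obtain the global statements.

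\emph{Element conservation.} I apply \(\overline{\I}_k^T\) to \cref{eq:cdsbp_corrected_residual}. The correction and dissipation drop out by \cref{eq:cdsbp_conservative_moments}. For the volume term, I use \(\Q_{x_ik}^T\bm1_k=\E_{x_ik}\bm1_k-\Q_{x_ik}\bm1_k\). Since \(\D_{x_ik}\) is exact for constants, \(\Q_{x_ik}\bm1_k=\bm0\). Together with \cref{eq:local_sbp_boundary} and \cref{eq:facet_constant_restriction}, this gives
\[-\sum_i\overline{\I}_k^T\overline{\Q}_{x_ik}\bm f_k^{(i)}=-\sum_{\gamma\subset\Gamma_k}\overline{\I}_\gamma^T\overline{\B}_\gamma\bm f^n_{\gamma k}.\]
The SAT term replaces \(\bm f^n_{\gamma k}\) by \(\bm f^*_{\gamma k}\) on D-facets, which yields \cref{eq:cdsbp_element_conservation}.

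\emph{Element entropy.} By \cref{eq:cdsbp_entropy_defect}, \(-\sum_i\bm w_k^T\overline{\Q}_{x_ik}\bm f_k^{(i)}=-\delta_k-\sum_{\gamma\subset\Gamma_k}\bm1_\gamma^T\B_\gamma\bm g^n_{\gamma k}\). For the SAT term, \(\R_{\gamma k}\) is a Boolean extraction, so \(\bm w_{\gamma k}^T\overline{\B}_\gamma\bm f^n_{\gamma k}=\bm1_\gamma^T\B_\gamma\sum_i\N_{x_i\gamma k}\R_{\gamma k}(\bm w^T\bm f^{(i)})\). This equals \(\bm1_\gamma^T\B_\gamma(\bm\psi^n_{\gamma k}+\bm g^n_{\gamma k})\) by \cref{eq:entropy_potential}, using that \(\B_\gamma\) and \(\N\) are diagonal. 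The \(\bm g^n\) terms on D-facets therefore cancel, leaving \(\bm\psi^n-\bm w^T\bm f^*\) there. Adding the correction and dissipation moments \(\sigma_k(1-\eta_k)\delta_k-(1-\eta_k)\varepsilon_k\) from \cref{eq:cdsbp_entropy_moments} gives \cref{eq:cdsbp_element_entropy}. The only subtlety in this step is the pointwise commutation of restriction with \(\bm w^T\bm f\), which is exactly where collocation is used.

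\emph{Global statements.} Multiplying \cref{eq:cdsbp_scheme} by \((\overline{\I}_g^\chi)^T\overline{\H}_g^\chi\) and using \(\overline{\P}_k^\chi\overline{\I}_g^\chi=\overline{\I}_k\) reduces global conservation to \(\sum_k\overline{\I}_k^T\widetilde{\bm r}_{k,\epsilon}^\chi\). For entropy, the chain rule and diagonality of \(\H_g^\chi\) give \((\bm1_g^\chi)^T\H_g^\chi\,\mathrm d\bm s_g^\chi/\mathrm dt=(\bm w_g^\chi)^T\overline{\H}_g^\chi\,\mathrm d\bm u_g^\chi/\mathrm dt=\sum_k\bm w_k^T\widetilde{\bm r}_{k,\epsilon}^\chi\). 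I then sum the element identities facet by facet.

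On a C-facet, the traces agree by \cref{eq:cdsbp_continuous_trace}, so the pointwise fluxes agree. The normals are opposite by \cref{eq:cdsbp_paired_metric_compatibility}, so \(\bm f^n\) and \(\bm g^n\) cancel pairwise. On a D-facet, \(\bm f^*\) cancels by \cref{eq:cdsbp_conservative_flux}. For the entropy, I rewrite \(-\bm w_{\gamma v}^T\overline{\B}_\gamma\bm f^*_{\gamma v}=+\bm w_{\gamma v}^T\overline{\B}_\gamma\bm f^*_{\gamma k}\). The pair's contribution is then \(\bm1_\gamma^T\B_\gamma(\bm\psi^n_{\gamma k}+\bm\psi^n_{\gamma v})-(\bm w_{\gamma k}-\bm w_{\gamma v})^T\overline{\B}_\gamma\bm f^*_{\gamma k}=-\varepsilon_*^\gamma\). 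Periodic pairs are handled identically. This bookkeeping is the main obstacle, since each facet has to be counted once with consistent orientation.

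\emph{Inequality.} The facet terms satisfy \(-\varepsilon_*^\gamma\le0\) by \cref{eq:cdsbp_entropy_condition}, and \(-(1-\eta_k)\varepsilon_k\le0\). For the remaining term I split by mode and sign of \(\delta_k\):
\begin{itemize}
\item if \(\sigma_k=1\), the term is \(-\eta_k\delta_k\), which is at most \(0\) when \(\delta_k\ge0\);
\item for the entropy-dissipative mode with \(\delta_k>0\), \(\sigma_k=0\) and the term is \(-\delta_k<0\);
\item with \(\delta_k<0\), \(\sigma_k=1\) and the term is \(-\eta_k\delta_k\);
\item with \(\delta_k=0\), the term vanishes.
\end{itemize}
Hence every element contributes at most \(-\eta_k\delta_k\) when \(\delta_k<0\) and at most \(0\) otherwise, which gives \cref{eq:cdsbp_global_entropy}.
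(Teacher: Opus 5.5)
Your proposal is correct and follows the paper's proof essentially step for step. It uses the same ingredients in the same order: the SBP/constant-exactness reduction for the conservative moment, the definition of $\delta_k$ together with the nodal entropy-potential identity for the entropy moment, pairwise cancellation on C- and D-facets after assembly, and a sign analysis of the coefficient of $\delta_k$. Your explicit treatment of $\delta_k=0$ (where $\sigma_k^{\rm ed}=1/2$ but the term vanishes) covers a case the paper's proof leaves implicit.
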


\begin{proof}
Since constants are differentiated exactly and \(\Q_{x_i k}+\Q_{x_i k}^T=\E_{x_i k}\), we have
\[
    \overline{\I}_k^T
    \left(-\sum_{i=1}^{d}\overline{\Q}_{x_i k}\bm f_k^{(i)}\right)
    =
    -\sum_{\gamma\subset\Gamma_k}
    \overline{\I}_\gamma^T\overline{\B}_\gamma
    \bm f_{\gamma k}^{n}.
\]
On each D-SBP facet, the physical-flux term in the SAT in
\cref{eq:cdsbp_base_residual} cancels the corresponding term in this sum,
leaving the physical flux on the C-SBP facets and the numerical flux on the
D-SBP facets. The zero conservative moments in \cref{eq:cdsbp_conservative_moments} then prove \cref{eq:cdsbp_element_conservation}. To analyze entropy stability, we use \cref{eq:cdsbp_base_residual,eq:cdsbp_entropy_defect} to write
\begin{align}
    \bm w_k^T\bm q_k^\chi
    ={}&
    -\delta_k
    -\sum_{\gamma\subset\Gamma_k}
    \bm1_\gamma^T\B_\gamma\bm g_{\gamma k}^{n}
    +\sum_{\gamma\in\Gamma_{{\rm D},k}^{\chi}}
    \bm w_{\gamma k}^T\overline{\B}_\gamma
    (\bm f_{\gamma k}^{n}-\bm f_{\gamma k}^{*})
    \nonumber\\
    ={}&
    -\delta_k
    -\sum_{\gamma\in\Gamma_{{\rm C},k}^{\chi}}
    \bm1_\gamma^T\B_\gamma\bm g_{\gamma k}^{n}
    +\sum_{\gamma\in\Gamma_{{\rm D},k}^{\chi}}
    \left(
        \bm1_\gamma^T\B_\gamma\bm\psi_{\gamma k}^{n}
        -\bm w_{\gamma k}^T\overline{\B}_\gamma
        \bm f_{\gamma k}^{*}
    \right),
    \label{eq:cdsbp_base_entropy_moment}
\end{align}
where the second equality follows from the nodal entropy-potential identity
\(\bm w_{\gamma k}^T\overline{\B}_\gamma\bm f_{\gamma k}^{n}
-\bm1_\gamma^T\B_\gamma\bm g_{\gamma k}^{n}
=\bm1_\gamma^T\B_\gamma\bm\psi_{\gamma k}^{n}\).
Adding the entropy moment of \(\bm c_{k,\epsilon}\) and subtracting that of
\(\bm d_{k,\epsilon}\) using \cref{eq:cdsbp_entropy_moments} proves
\cref{eq:cdsbp_element_entropy}.

To show global conservation, we use the definition of \(\overline{\I}_g^\chi\) and the constant-injection property in \cref{eq:cdsbp_partial_assembly_map} and write
\[
    (\overline{\I}_g^\chi)^T\overline{\H}_g^\chi
    \der[\bm u_g^\chi]{t}
    =
    \sum_{\Omega_k\in\mathcal T_h}\overline{\I}_k^T
    \widetilde{\bm r}_{k,\epsilon}^{\chi}.
\]
On a C-SBP facet, the physical flux is single valued and the two outward
normals are opposite. On a D-SBP facet,
\cref{eq:cdsbp_conservative_flux} gives the corresponding cancellation.
Combining these results proves the global conservation statement in
\cref{eq:cdsbp_global_conservation}. For the global entropy-stability analysis, we multiply \cref{eq:cdsbp_scheme} by \((\bm w_g^\chi)^T\overline{\H}_g^\chi\), use \(\bm w_k=\overline{\P}_k^\chi\bm w_g^\chi\), and apply the chain rule to the temporal term to find
\[
    (\bm w_g^\chi)^T\overline{\H}_g^\chi
    \der[\bm u_g^\chi]{t}
    =
    (\bm1_g^\chi)^T\H_g^\chi
    \der[\bm s_g^\chi]{t}
    =
    \sum_{\Omega_k\in\mathcal T_h}
    \bm w_k^T\widetilde{\bm r}_{k,\epsilon}^{\chi}.
\]
The physical entropy fluxes cancel pairwise on C-SBP facets. For a D-SBP facet
\(\gamma=\Gamma_k\cap\Gamma_v\), numerical-flux conservation gives
\(\bm f_{\gamma v}^*=-\bm f_{\gamma k}^*\). Adding the two element-side
contributions in \cref{eq:cdsbp_element_entropy} therefore gives
\begin{align*}
    \bm1_\gamma^T\B_\gamma
    (\bm\psi_{\gamma k}^{n}+\bm\psi_{\gamma v}^{n})
    -(\bm w_{\gamma k}-\bm w_{\gamma v})^T
    \overline{\B}_\gamma\bm f_{\gamma k}^{*}
    =
    -\varepsilon_*^\gamma\le0.
\end{align*}
Using \cref{eq:cdsbp_corrected_residual,eq:cdsbp_base_entropy_moment,eq:cdsbp_entropy_moments}
and the facet cancellations above, we obtain
\begin{align*}
    \sum_{\Omega_k\in\mathcal T_h}
    \bm w_k^T\widetilde{\bm r}_{k,\epsilon}^{\chi}
    ={}&
    \sum_{\Omega_k\in\mathcal T_h}
    \left(
        \bm w_k^T\bm q_k^\chi
        +\bm w_k^T\bm c_{k,\epsilon}
        -\bm w_k^T\bm d_{k,\epsilon}
    \right)\\
    ={}&
    -\sum_{\gamma\in\Gamma_{\rm D}^I}\varepsilon_*^\gamma
    -\sum_{\Omega_k\in\mathcal T_h}
    [1-\sigma_k(1-\eta_k)]\delta_k
    -\sum_{\Omega_k\in\mathcal T_h}
    (1-\eta_k)\varepsilon_k.
\end{align*}
This proves the equality in \cref{eq:cdsbp_global_entropy}. In the
entropy-conservative correction mode the coefficient of \(\delta_k\) is
\(\eta_k\). In the entropy-dissipative mode it is one for \(\delta_k>0\)
and \(\eta_k\) for \(\delta_k<0\). Dropping all nonpositive contributions
gives the stated bound in \cref{eq:cdsbp_global_entropy}.
\end{proof}

\begin{remark}
For an element with \(\delta_k<0\), the upper bound in \cref{eq:cdsbp_global_entropy} contains the positive term \(-\eta_k\delta_k\). By \cref{eq:alpha_moments}, \(\eta_k\) approaches one when \(b_k^2\ll\epsilon_{\rm mach}V_k^2\), so the estimate may allow nearly all of \(-\delta_k\) to remain. The same issue arises when denominators are regularized in other residual-correction methods \cite{abgrall2018general,chan2025artificial}. For an exactly constant element state, \(\delta_k=0\). A branch intended to avoid division by very small \(b_k\) must also handle nearly constant states, typically using a tolerance; that tolerance may turn off the correction for a nonconstant state and leave a nonzero entropy-producing defect uncorrected. Thus, both a regularized denominator and a tolerance-based branch can leave a positive entropy contribution. Guaranteeing entropy nonincrease requires sufficient dissipation, already present or added, to offset that contribution.
\end{remark}

\begin{corollary}\label{cor:cdsbp_endpoints}
The all-C and all-D classifications recover the pure endpoint schemes
exactly. 
\end{corollary}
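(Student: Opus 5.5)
The plan is to specialize the definitions in \cref{eq:cdsbp_scheme,eq:cdsbp_corrected_residual,eq:cdsbp_base_residual,eq:cdsbp_entropy_defect,eq:cdsbp_correction_mode,eq:cdsbp_correction,eq:cdsbp_dissipation} to the two extreme partitions. In each case we check, term by term, that the assembly map, the norm, and the base residual coincide with those of the pure C-SBP or pure D-SBP residual-correction scheme. The correction and dissipation terms need no case analysis. By construction, \(\delta_k\) and \(\bm\alpha_{k,\epsilon}\) are computed from \(\bm u_k\) alone and have no explicit dependence on \(\chi\). Hence they coincide with the endpoint corrections whenever the element states coincide.

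\emph{All-C case}, \(\Gamma_{\rm D}^I=\emptyset\). First, \(\Gamma_{{\rm D},k}^{\chi}=\emptyset\) for every \(k\), so the SAT sum in \cref{eq:cdsbp_base_residual} is empty and \(\bm q_k^\chi=-\sum_i\overline{\Q}_{x_i k}\bm f_k^{(i)}\). Second, every pair of matching copies is identified, so \(\P_k^\chi=\P_k\) and \(\H_g^\chi=\H_g\), the all-continuous specialization of \cref{eq:cdsbp_assembled_norm}. Third, because \(\R_{\gamma k}\) is Boolean and collocated, we have \(\bm f_k^{(i)}=\overline{\P}_k\bm f_g^{(i)}\), where \(\bm f_g^{(i)}\) is the nodal flux of the assembled state. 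Substituting into \cref{eq:cdsbp_scheme} gives
\[
\der[\bm u_g]{t}=-\sum_i\overline{\D}_{x_i}\bm f_g^{(i)}+\overline{\H}_g^{-1}\sum_k\overline{\P}_k^T(\bm c_{k,\epsilon}-\bm d_{k,\epsilon}),
\]
with \(\D_{x_i}\) as in \cref{eq:csbp_oper_def}. This is the pure C-SBP residual-correction scheme.

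\emph{All-D case}, \(\Gamma_{\rm C}^I=\emptyset\). Each element-local copy is its own global node. After reordering, \(\P_k^\chi\) is the block-row selection of element \(k\), and \(\H_g^\chi\) is block diagonal with blocks \(\H_k\). The system \cref{eq:cdsbp_scheme} therefore decouples into \(\H_k\,\der[\bm u_k]{t}=\widetilde{\bm r}_{k,\epsilon}\). Here the SATs act on all of \(\Gamma_k\), which is the standard D-SBP divergence-form residual with the same correction.

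The identities of \cref{thm:cdsbp_balance} also specialize: the C-facet or D-facet sums vanish, and \(\varepsilon_*^\gamma\) is absent in the all-C case. The main obstacle is making precise what the ``pure endpoint scheme'' means, that is, fixing the reference C-SBP and D-SBP correction schemes so that the comparison is an identity. The most delicate point is the all-C identification \(\bm f_k^{(i)}=\overline{\P}_k\bm f_g^{(i)}\), since it relies on collocation. Once it holds, each remaining step is a direct substitution.
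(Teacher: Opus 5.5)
Your proposal is correct and follows the paper's argument. In each endpoint you specialize the SAT sum in \(\bm q_k^\chi\) and the assembly map and norm, and you use that \(\bm c_{k,\epsilon}\) and \(\bm d_{k,\epsilon}\) have no explicit dependence on \(\chi\). The differences are cosmetic. The paper also applies \(\Q_{x_i k}+\Q_{x_i k}^T=\E_{x_i k}\) to rewrite the all-D residual in the weak form \(\sum_i\overline{\Q}_{x_i k}^T\bm f_k^{(i)}-\sum_{\gamma\subset\Gamma_k}\overline{\R}_{\gamma k}^T\overline{\B}_\gamma\bm f_{\gamma k}^{*}\). You instead write out the all-C assembly to \(-\sum_i\overline{\D}_{x_i}\bm f_g^{(i)}\) explicitly. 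There, the identity \(\bm f_k^{(i)}=\overline{\P}_k\bm f_g^{(i)}\) is not delicate: it follows immediately from each row of \(\P_k\) having a single unit entry, not from collocation of \(\R_{\gamma k}\).
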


\begin{proof}
If \(\Gamma_{\rm D}^I=\varnothing\), then
\(\bm q_k^\chi=-\sum_i\overline{\Q}_{x_i k}\bm f_k^{(i)}\) and
\cref{eq:cdsbp_scheme} is the pure C-SBP method. If
\(\Gamma_{\rm C}^I=\varnothing\), the assembled norm is the broken norm and
\begin{align*}
    \bm q_k^\chi
    &=
    -\sum_i\overline{\Q}_{x_i k}\bm f_k^{(i)}
    +\sum_{\gamma\subset\Gamma_k}
    \overline{\R}_{\gamma k}^T\overline{\B}_\gamma
    (\bm f_{\gamma k}^{n}-\bm f_{\gamma k}^{*})
    =
    \sum_i\overline{\Q}_{x_i k}^T\bm f_k^{(i)}
    -\sum_{\gamma\subset\Gamma_k}
    \overline{\R}_{\gamma k}^T\overline{\B}_\gamma
    \bm f_{\gamma k}^{*},
\end{align*}
which is the mass-matrix-multiplied D-SBP base residual. Since
\(\bm c_{k,\epsilon}\) and \(\bm d_{k,\epsilon}\) have no explicit dependence
on \(\chi\), the complete all-C and all-D discretizations recover the
corresponding pure residual-correction schemes.
\end{proof}

\section{Conservative and entropy-stable dynamic repartitioning}
\label{sec:dynamic_partition}

The C/D facet partition may change between accepted time steps, for example in
response to an element sensor that tracks solution features. A C-to-D change
removes a continuity constraint but does not alter the element-local nodal
values, since any newly independent copies inherit their common pre-change
value. A D-to-C change imposes a new continuity constraint and
therefore requires the matched copies to be assigned a common state. For a partition \(\chi\), define
\begin{equation}\label{eq:cdsbp_broken_operators}
    \H_b
    \coloneqq
    \operatorname{diag}(\H_1,\ldots,\H_{n_e}),
    \qquad
    \P^\chi
    \coloneqq
    \begin{bmatrix}
        \P_1^\chi\\[-0.2em]
        \vdots\\[-0.2em]
        \P_{n_e}^\chi
    \end{bmatrix}.
\end{equation}
The partially assembled norm satisfies
\begin{equation}\label{eq:cdsbp_broken_assembled_norm}
    \H_g^\chi
    =
    (\P^\chi)^T\H_b\P^\chi.
\end{equation}
For a time-dependent partition, let
\(\bm u_b(t)\coloneqq\overline{\P}^{\chi(t)}\bm u_g(t)\). At a
repartitioning time \(t_j\), we use the shorthand
\[
    \chi^\pm\coloneqq\chi(t_j^\pm),
    \qquad
    \bm u_g^\pm\coloneqq\bm u_g(t_j^\pm),
    \qquad
    \bm u_b^\pm\coloneqq\bm u_b(t_j^\pm),
\]
where the superscripts \(-\) and \(+\) denote the values immediately before
and after the transfer, respectively.

The transfer applies the same norm-weighted accumulate-and-invert operation
used in the C-SBP spatial discretization to \(\bm u_b^-\), but with the
assembly map of the new partition, \ie,
\begin{align}
    \bm u_g^+
    &\coloneqq
    (\overline{\H}_g^{\chi^+})^{-1}
    (\overline{\P}^{\chi^+})^T
    \overline{\H}_b\bm u_b^-,
    \label{eq:cdsbp_state_transfer}\\
    \bm u_b^+
    &=
    \Pi^{\chi^+}\bm u_b^-,
    \label{eq:cdsbp_projected_state}
\end{align}
where 
\begin{equation}\label{eq:cdsbp_broken_projection}
    \Pi^{\chi^+}
    \coloneqq
    \overline{\P}^{\chi^+}
    (\overline{\H}_g^{\chi^+})^{-1}
    (\overline{\P}^{\chi^+})^T
    \overline{\H}_b.
\end{equation}
The identities
\[
    (\Pi^{\chi^+})^2=\Pi^{\chi^+},
    \qquad
    \operatorname{range}(\Pi^{\chi^+})
    =\operatorname{range}(\overline{\P}^{\chi^+}),
    \qquad
    (\Pi^{\chi^+})^T\overline{\H}_b
    =\overline{\H}_b\Pi^{\chi^+}
\]
show that \(\Pi^{\chi^+}\) is an orthogonal projection with respect to
\(\overline{\H}_b\) onto
\(\operatorname{range}(\overline{\P}^{\chi^+})\), the space of broken states
satisfying the new C-SBP continuity constraints. It has the same algebraic
form as the direct-stiffness-summation projection analyzed by Kopera and
Giraldo \cite[Appendix~B]{kopera2015mass}.

To make the transfer more transparent, we introduce the nodal form of the
operation in \cref{eq:cdsbp_broken_projection}. For each new global solution
node \(j\), the entries \([\P_k^{\chi^+}]_{mj}=1\) select all local nodal
copies \((k,m)\) assigned to \(j\). Their assembled weight is
\begin{align}
    \omega_j^+
    &\coloneqq
    \sum_{\Omega_k\in\mathcal T_h}
    \sum_{m=1}^{n_p}
    [\P_k^{\chi^+}]_{mj}w_{k,m}
    =
    [\H_g^{\chi^+}]_{jj}.
    \label{eq:cdsbp_transferred_nodal_weight}
\end{align}
The transferred value is their mass-weighted average,
\begin{equation}\label{eq:cdsbp_transferred_nodal_state}
    [\bm u_g^+]_j
    =
    \frac{1}{\omega_j^+}
    \sum_{\Omega_k\in\mathcal T_h}
    \sum_{m=1}^{n_p}
    [\P_k^{\chi^+}]_{mj}
    w_{k,m}[\bm u_b^-]_{k,m}.
\end{equation}
These matrices in \cref{eq:cdsbp_state_transfer,eq:cdsbp_projected_state} are used only for analysis and need not be formed explicitly.
In the implementation, \(\P_k^{\chi^+}\) is represented by the new
local-to-global map, and the sums above are evaluated directly over the copies
sharing each global index. A C-to-D change removes the corresponding
connection when this map is constructed; any copies thereby separated receive
different indices but retain their common pre-transfer value. All D-to-C
connections are added before the sums are evaluated, so matched copies and any
copies connected to them through existing or simultaneously added C-SBP
facets receive the same index. A constraint between copies that are already
connected is redundant, and copies not belonging to an affected index remain
unchanged.

To assess the entropy change caused by repartitioning, we compare the discrete
entropy immediately before and after each transfer. For a broken state
\(\bm u_b\), we define the total entropy by
\begin{equation}\label{eq:cdsbp_discrete_entropy}
    S_h(\bm u_b)
    \coloneqq
    \sum_{\Omega_k\in\mathcal T_h}
    \sum_{m=1}^{n_p}
    w_{k,m}
    \fn S([\bm u_b]_{k,m}).
\end{equation}
When \(\bm u_b=\overline{\P}^\chi\bm u_g^\chi\), this definition is
equivalent to
\begin{equation}\label{eq:cdsbp_entropy_representations}
    S_h(\bm u_b)
    =
    (\bm1_g^\chi)^T\H_g^\chi\bm s_g^\chi.
\end{equation}

\begin{theorem}\label{thm:cdsbp_state_transfer}
Let \(\chi^-\) and \(\chi^+\) be any two C/D partitions of the same
conforming mesh. The transfer in
\cref{eq:cdsbp_state_transfer} satisfies the following properties.

\begin{enumerate}
\item[(i)] \emph{Conservation.} For each new global index \(j\), the associated
conservative moment is preserved:
\begin{equation}\label{eq:cdsbp_transfer_index_conservation}
    \omega_j^+[\bm u_g^+]_j
    =
    \sum_{\Omega_k\in\mathcal T_h}
    \sum_{m=1}^{n_p}
    [\P_k^{\chi^+}]_{mj}w_{k,m}[\bm u_b^-]_{k,m},
    \qquad
    j=1,\ldots,n_g^{\chi^+}.
\end{equation}
Summing over all new global indices shows that every globally conserved
component is preserved:
\begin{equation}\label{eq:cdsbp_transfer_conservation}
    \sum_{\Omega_k\in\mathcal T_h}
    \sum_{m=1}^{n_p}
    w_{k,m}[\bm u_b^+]_{k,m}
    =
    \sum_{\Omega_k\in\mathcal T_h}
    \sum_{m=1}^{n_p}
    w_{k,m}[\bm u_b^-]_{k,m}.
\end{equation}

\item[(ii)] \emph{Entropy stability.} The transfer does not increase the
discrete entropy:
\begin{equation}\label{eq:cdsbp_transfer_entropy}
    S_h(\bm u_b^+)\le S_h(\bm u_b^-).
\end{equation}

\item[(iii)] \emph{Admissibility preservation.} If the admissible set is
convex, admissible pre-transfer nodal states remain admissible after transfer.

\item[(iv)] \emph{Error non-amplification.} For every reference state
\(\bm z_b\in\operatorname{range}(\overline{\P}^{\chi^+})\), the transfer
leaves \(\bm z_b\) unchanged and does not increase the error measured in the
\(\overline{\H}_b\)-norm:
\begin{equation}\label{eq:cdsbp_transfer_error_nonamplification}
    \Pi^{\chi^+}\bm z_b=\bm z_b,
    \quad
    \|\bm u_b^+-\bm z_b\|_{\overline{\H}_b}
    \le
    \|\bm u_b^--\bm z_b\|_{\overline{\H}_b},
    \quad
    \|\bm v\|_{\overline{\H}_b}^2
    \coloneqq\bm v^T\overline{\H}_b\bm v.
\end{equation}
In particular, a pure C-to-D change leaves the element-local nodal values
unchanged: \(\bm u_b^+=\bm u_b^-\).
\end{enumerate}
\end{theorem}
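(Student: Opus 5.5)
The plan is to reduce everything to the nodal form \cref{eq:cdsbp_transferred_nodal_state}. The transfer replaces all local copies $(k,m)$ assigned to a new global index $j$ by a single value: their convex combination with weights $w_{k,m}/\omega_j^+$. These weights are positive and sum to one by \cref{eq:cdsbp_transferred_nodal_weight}. Since $\bm u_b^+=\overline{\P}^{\chi^+}\bm u_g^+$ and every row of $\P_k^{\chi^+}$ has exactly one unit entry, each local copy after transfer equals $[\bm u_g^+]_j$ for its unique index $j$. Thus the copy sets $\{(k,m):[\P_k^{\chi^+}]_{mj}=1\}$, $j=1,\ldots,n_g^{\chi^+}$, partition all local nodal copies. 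This partition is the organizing device for all four parts.

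For (i), multiply \cref{eq:cdsbp_transferred_nodal_state} by $\omega_j^+$ to obtain \cref{eq:cdsbp_transfer_index_conservation} directly. Then observe that
\[
\sum_{k,m} w_{k,m}[\bm u_b^+]_{k,m}=\sum_j \omega_j^+[\bm u_g^+]_j,
\]
because the copy sets partition the copies. Summing \cref{eq:cdsbp_transfer_index_conservation} over $j$ and again using the partition gives \cref{eq:cdsbp_transfer_conservation}. Equivalently, in matrix form, $(\overline{\I}_g^{\chi^+})^T(\overline{\P}^{\chi^+})^T=\overline{\I}_b^T$ by \cref{eq:cdsbp_partial_assembly_map}, which yields $\overline{\I}_b^T\overline{\H}_b\Pi^{\chi^+}=\overline{\I}_b^T\overline{\H}_b$.

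For (iii), each new nodal value is a convex combination of admissible pre-transfer values, so it is admissible when the admissible set is convex. For (ii), this same fact ensures that $\fn S$ is defined at the transferred values. Jensen's inequality for the convex $\fn S$ on each copy set then gives
\[
\omega_j^+\fn S([\bm u_g^+]_j)\le\sum_{(k,m)\in j}w_{k,m}\fn S([\bm u_b^-]_{k,m}).
\]
Summing over $j$ and using \cref{eq:cdsbp_discrete_entropy} together with the partition yields \cref{eq:cdsbp_transfer_entropy}. The main care point is the order of the argument: admissibility must be established first so that Jensen is applied on the convex set where $\fn S$ is convex.

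For (iv), write $\bm z_b=\overline{\P}^{\chi^+}\bm y$. Then
\[
\Pi^{\chi^+}\bm z_b=\overline{\P}^{\chi^+}(\overline{\H}_g^{\chi^+})^{-1}(\overline{\P}^{\chi^+})^T\overline{\H}_b\overline{\P}^{\chi^+}\bm y=\bm z_b,
\]
where the last step uses \cref{eq:cdsbp_broken_assembled_norm} in Kronecker form. Hence $\bm u_b^+-\bm z_b=\Pi^{\chi^+}(\bm u_b^--\bm z_b)$. Since $\Pi^{\chi^+}$ is idempotent and $\overline{\H}_b$-self-adjoint, which are the identities listed before the theorem, we get $\|\Pi\bm v\|^2+\|(\I-\Pi)\bm v\|^2=\|\bm v\|^2$ in the $\overline{\H}_b$-norm, and the bound follows. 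For a pure C-to-D change, $\operatorname{range}(\overline{\P}^{\chi^-})\subseteq\operatorname{range}(\overline{\P}^{\chi^+})$, because removing continuity constraints enlarges the space. Therefore $\bm u_b^-=\overline{\P}^{\chi^-}\bm u_g^-$ lies in the new range, and the fixed-point property gives $\bm u_b^+=\bm u_b^-$. Nodally, each new copy set is contained in an old one on which all values agree, so the weighted average of equal values returns that value.
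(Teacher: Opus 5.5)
Your proposal is correct and follows the paper's proof essentially step by step. The convex-combination form of \cref{eq:cdsbp_transferred_nodal_state}, together with the row property \(\sum_j[\P_k^{\chi^+}]_{mj}=1\), gives (i)--(iii) through summation and Jensen's inequality, and the \(\overline{\H}_b\)-orthogonality of \(\Pi^{\chi^+}\) gives (iv) through the Pythagorean identity; your range-inclusion argument \(\operatorname{range}(\overline{\P}^{\chi^-})\subseteq\operatorname{range}(\overline{\P}^{\chi^+})\) for a pure C-to-D change is a useful addition, since the paper only asserts \(\bm u_b^+=\bm u_b^-\) there.
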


\begin{proof}
For each new global index \(j\), the coefficients
\(
    [\P_k^{\chi^+}]_{mj}w_{k,m}/\omega_j^+
\)
are nonnegative, and by \cref{eq:cdsbp_transferred_nodal_weight}, we have
\[
    \sum_{\Omega_k\in\mathcal T_h}
    \sum_{m=1}^{n_p}
    \frac{[\P_k^{\chi^+}]_{mj}w_{k,m}}{\omega_j^+}
    =
    \frac{\omega_j^+}{\omega_j^+}
    =1.
\]
Thus, \cref{eq:cdsbp_transferred_nodal_state} expresses
\([\bm u_g^+]_j\) as a convex combination of the pre-transfer nodal states.
Multiplying \cref{eq:cdsbp_transferred_nodal_state} by \(\omega_j^+\) proves
\cref{eq:cdsbp_transfer_index_conservation}. Summing this identity over
\(j=1,\ldots,n_g^{\chi^+}\) gives
\begin{align*}
    \sum_{j=1}^{n_g^{\chi^+}}
    \omega_j^+[\bm u_g^+]_j
    &=
    \sum_{j=1}^{n_g^{\chi^+}}
    \sum_{\Omega_k\in\mathcal T_h}
    \sum_{m=1}^{n_p}
    [\P_k^{\chi^+}]_{mj}
    w_{k,m}[\bm u_b^-]_{k,m}\\
    &=
    \sum_{\Omega_k\in\mathcal T_h}
    \sum_{m=1}^{n_p}
    w_{k,m}[\bm u_b^-]_{k,m}
    \underbrace{\sum_{j=1}^{n_g^{\chi^+}}
    [\P_k^{\chi^+}]_{mj}}_{=1}\\
    &=
    \sum_{\Omega_k\in\mathcal T_h}
    \sum_{m=1}^{n_p}
    w_{k,m}[\bm u_b^-]_{k,m}.
\end{align*}
The left-hand side is the post-transfer conservative quantity, which, using
\(\bm u_b^+=\overline{\P}^{\chi^+}\bm u_g^+\) and
\cref{eq:cdsbp_transferred_nodal_weight}, can be written as
\begin{align*}
    \sum_{\Omega_k\in\mathcal T_h}
    \sum_{m=1}^{n_p}
    w_{k,m}[\bm u_b^+]_{k,m}
    &=
    \sum_{\Omega_k\in\mathcal T_h}
    \sum_{m=1}^{n_p}
    w_{k,m}
    \sum_{j=1}^{n_g^{\chi^+}}
    [\P_k^{\chi^+}]_{mj}[\bm u_g^+]_j\\
    &=
    \sum_{j=1}^{n_g^{\chi^+}}
    \left(
        \sum_{\Omega_k\in\mathcal T_h}
        \sum_{m=1}^{n_p}
        [\P_k^{\chi^+}]_{mj}w_{k,m}
    \right)[\bm u_g^+]_j\\
    &=
    \sum_{j=1}^{n_g^{\chi^+}}
    \omega_j^+[\bm u_g^+]_j.
\end{align*}
Combining the last two identities proves
\cref{eq:cdsbp_transfer_conservation}.

To prove \((ii)\), we apply the convexity of \(\fn S\) to the convex
combination in \cref{eq:cdsbp_transferred_nodal_state} and use Jensen's inequality to write
\begin{align*}
    \fn S([\bm u_g^+]_j)
    &=
    \fn S\!\left(
        \sum_{\Omega_k\in\mathcal T_h}
        \sum_{m=1}^{n_p}
        \frac{[\P_k^{\chi^+}]_{mj}w_{k,m}}{\omega_j^+}
        [\bm u_b^-]_{k,m}
    \right)
    \le
    \sum_{\Omega_k\in\mathcal T_h}
    \sum_{m=1}^{n_p}
    \frac{[\P_k^{\chi^+}]_{mj}w_{k,m}}{\omega_j^+}
    \fn S([\bm u_b^-]_{k,m}).
\end{align*}
Multiplying by \(\omega_j^+\) and summing over
\(j=1,\ldots,n_g^{\chi^+}\) therefore gives
\begin{align*}
    \sum_{j=1}^{n_g^{\chi^+}}
    \omega_j^+\fn S([\bm u_g^+]_j)
    &\le
    \sum_{j=1}^{n_g^{\chi^+}}
    \sum_{\Omega_k\in\mathcal T_h}
    \sum_{m=1}^{n_p}
    [\P_k^{\chi^+}]_{mj}w_{k,m}
    \fn S([\bm u_b^-]_{k,m})\\
    &=
    \sum_{\Omega_k\in\mathcal T_h}
    \sum_{m=1}^{n_p}
    w_{k,m}\fn S([\bm u_b^-]_{k,m})
    \sum_{j=1}^{n_g^{\chi^+}}
    [\P_k^{\chi^+}]_{mj}\\
    &=S_h(\bm u_b^-).
\end{align*}
The left-hand side is the post-transfer discrete entropy. Since
\(\bm u_b^+=\overline{\P}^{\chi^+}\bm u_g^+\), each post-transfer local
copy takes the value assigned to its new global index, and hence
\begin{align*}
    S_h(\bm u_b^+)
    &=
    \sum_{\Omega_k\in\mathcal T_h}
    \sum_{m=1}^{n_p}
    w_{k,m}\fn S([\bm u_b^+]_{k,m})
    =
    \sum_{\Omega_k\in\mathcal T_h}
    \sum_{m=1}^{n_p}
    w_{k,m}
    \sum_{j=1}^{n_g^{\chi^+}}
    [\P_k^{\chi^+}]_{mj}\fn S([\bm u_g^+]_j)\\
    &=
    \sum_{j=1}^{n_g^{\chi^+}}
    \left(
        \sum_{\Omega_k\in\mathcal T_h}
        \sum_{m=1}^{n_p}
        [\P_k^{\chi^+}]_{mj}w_{k,m}
    \right)\fn S([\bm u_g^+]_j)\\
    &=
    \sum_{j=1}^{n_g^{\chi^+}}
    \omega_j^+\fn S([\bm u_g^+]_j).
\end{align*}
Combining the last two identities proves
\cref{eq:cdsbp_transfer_entropy}. If all pre-transfer states
belong to a convex admissible set, each convex combination
\([\bm u_g^+]_j\) belongs to the same set. Scattering these values to the
element-local copies therefore proves the admissibility preservation statement in \((iii)\).

Finally, since \(\Pi^{\chi^+}\) is an orthogonal projection with respect to
\(\overline{\H}_b\), we have \(\Pi^{\chi^+}\bm z_b=\bm z_b\) for all \(\bm z_b \in \operatorname{range}(\overline{\P}^{\chi^{+}})\). Moreover, since \(\bm u_{b}^+ = \Pi^{\chi^+} \bm u_b^{-}\), \(\bm u_b^{-} - \bm u_b^{+}\) is orthogonal to this column space, while \(\bm u_b^+-\bm z_b\) lies in it. Therefore, we have
\begin{align*}
    \|\bm u_b^--\bm z_b\|_{\overline{\H}_b}^2
    &=
    \|\bm u_b^+-\bm z_b\|_{\overline{\H}_b}^2
    +
    \|\bm u_b^--\bm u_b^+\|_{\overline{\H}_b}^2
    \ge
    \|\bm u_b^+-\bm z_b\|_{\overline{\H}_b}^2,
\end{align*}
which proves \cref{eq:cdsbp_transfer_error_nonamplification}. A pure C-to-D
change gives \(\bm u_b^+=\Pi^{\chi^+}\bm u_b^-=\bm u_b^-\). Consequently,
the error non-amplification estimate holds with equality.
\end{proof}

\begin{remark}\label{rem:cdsbp_switch_accuracy}
At a repartitioning time, let \(\bm z_b\) be the nodal
sampling of a smooth exact solution. Because the exact solution has matching
traces across every C-SBP facet in the new partition,
\(\bm z_b\in\operatorname{range}(\overline{\P}^{\chi^+})\). Hence,
\cref{eq:cdsbp_transfer_error_nonamplification} gives
\[
    \Pi^{\chi^+}\bm z_b=\bm z_b,
    \qquad
    \|\bm u_b^+-\bm z_b\|_{\overline{\H}_b}
    \le
    \|\bm u_b^--\bm z_b\|_{\overline{\H}_b}.
\]
Thus, a single transfer preserves the sampled exact solution and does not
increase the spatial error.
\end{remark}

\begin{remark}\label{rem:cdsbp_switch_locality}
Consider a D-to-C transfer that replaces two nodal states
\(\bm u_{\rm L}^-\) and \(\bm u_{\rm R}^-\), with weights
\(w_{\rm L},w_{\rm R}>0\), by the common state
\[
    \overline{\bm u}
    =
    \frac{w_{\rm L}\bm u_{\rm L}^-+w_{\rm R}\bm u_{\rm R}^-}
         {w_{\rm L}+w_{\rm R}}.
\]
The corresponding changes in the element conservative moments are
\[
    \Delta_{\rm L}
    \coloneqq
    w_{\rm L}(\overline{\bm u}-\bm u_{\rm L}^-)
    =
    \frac{w_{\rm L}w_{\rm R}}{w_{\rm L}+w_{\rm R}}
    (\bm u_{\rm R}^--\bm u_{\rm L}^-),
    \qquad
    \Delta_{\rm R}
    \coloneqq
    w_{\rm R}(\overline{\bm u}-\bm u_{\rm R}^-)
    =-\Delta_{\rm L}.
\]
Thus, the transfer produces an instantaneous conservative exchange between
the two elements: their individual moments need not remain unchanged, but the
equal-and-opposite changes give an exact local balance and preserve the total
quantity. For a merge of more than two copies, the corresponding changes
likewise sum to zero. The entropy inequality applies to the combined weighted
entropy of the merged copies and need not hold for each element separately.
After the transfer, the fixed-partition residual identities in
\cref{thm:cdsbp_balance} apply under the new partition.
\end{remark}

\begin{corollary}\label{cor:cdsbp_dynamic_balance}
Let \(0<t_1<\cdots<t_{N_s}<T\) be the only repartitioning times. Suppose that
\cref{eq:cdsbp_scheme} is evolved with a fixed C/D partition between
successive repartitioning times, that \cref{eq:cdsbp_state_transfer} is
applied at each \(t_j\), and that all nodal states remain admissible throughout
\([0,T]\). Define the entropy change at the \(j\)-th transfer by
\begin{equation}\label{eq:cdsbp_transfer_entropy_change}
    \Delta S_{{\rm tr},j}
    \coloneqq
    S_h(\bm u_b^+)
    -S_h(\bm u_b^-).
\end{equation}
Defining \( \Delta S_h^{0,T}\coloneqq S_h(\bm u_b(T))-S_h(\bm u_b(0))\), the piecewise semidiscrete entropy balance satisfies
\begin{align}
    \Delta S_h^{0,T}
    &=
    \sum_{j=1}^{N_s}\Delta S_{{\rm tr},j}
    -\int_0^T
    \sum_{\gamma\in\Gamma_{\rm D}^I(t)}
    \varepsilon_*^\gamma(t)\,\dd t
    \nonumber\\
    &\quad -\int_0^T\sum_{\Omega_k\in\mathcal T_h}
    \left(
        [1-\sigma_k(1-\eta_k)]\delta_k
        +(1-\eta_k)\varepsilon_k
    \right)\dd t
    \label{eq:cdsbp_dynamic_entropy_equality}
    \\
    & \le
    \sum_{j=1}^{N_s}\Delta S_{{\rm tr},j}
    -\int_0^T\sum_{\Omega_k:\,\delta_k<0}
      \eta_k\delta_k\,\dd t
    \le
    -\int_0^T\sum_{\Omega_k:\,\delta_k<0}
      \eta_k\delta_k\,\dd t.
    \label{eq:cdsbp_dynamic_entropy_inequality}
\end{align}
Thus, repartitioning adds no positive entropy contribution. 
\end{corollary}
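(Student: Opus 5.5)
The plan is to split $[0,T]$ at the repartitioning times, setting $t_0\coloneqq 0$ and $t_{N_s+1}\coloneqq T$. On each open interval $(t_j,t_{j+1})$ the partition $\chi(t)$ is fixed, so \cref{thm:cdsbp_balance} applies. Along the semidiscrete trajectory we have $\bm u_b=\overline{\P}^{\chi}\bm u_g^\chi$, so \cref{eq:cdsbp_entropy_representations} gives $S_h(\bm u_b(t))=(\bm1_g^\chi)^T\H_g^\chi\bm s_g^\chi(t)$. The equality in \cref{eq:cdsbp_global_entropy} therefore provides $\dd S_h/\dd t$ on each interval. This step needs differentiability of $\fn S$ along the trajectory, which follows from the assumption that all nodal states remain admissible.

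Integrating this rate over $(t_j,t_{j+1})$ gives
\[
S_h(\bm u_b(t_{j+1}^-))-S_h(\bm u_b(t_j^+)) = -\int_{t_j}^{t_{j+1}}\Bigl(\sum_{\gamma\in\Gamma_{\rm D}^I(t)}\varepsilon_*^\gamma+\sum_{\Omega_k}\bigl([1-\sigma_k(1-\eta_k)]\delta_k+(1-\eta_k)\varepsilon_k\bigr)\Bigr)\dd t .
\]
Here the interval endpoints are understood as one-sided limits. The integrand is continuous in time except possibly at sign changes of $\delta_k$ in the entropy-dissipative mode, where $\sigma_k$ jumps. It is nonetheless bounded and piecewise continuous along the solution, so it is integrable.

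Next, sum over $j=0,\ldots,N_s$ and add and subtract the jumps $S_h(\bm u_b^+)-S_h(\bm u_b^-)$ at each $t_j$. The sum telescopes to $\Delta S_h^{0,T}=\sum_j\Delta S_{{\rm tr},j}+\int_0^T(\cdots)\,\dd t$, which is \cref{eq:cdsbp_dynamic_entropy_equality}.

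The first inequality in \cref{eq:cdsbp_dynamic_entropy_inequality} follows pointwise in time from the same sign argument as in \cref{thm:cdsbp_balance}. That argument uses three facts: $\varepsilon_*^\gamma\ge0$ by \cref{eq:cdsbp_entropy_condition}; $(1-\eta_k)\varepsilon_k\ge0$; and the coefficient of $\delta_k$ is $\eta_k$, or equals one when $\delta_k>0$ in the dissipative mode. Together these show the integrand is bounded below by $\sum_{\delta_k<0}\eta_k\delta_k$. The second inequality uses $\Delta S_{{\rm tr},j}\le0$ from \cref{thm:cdsbp_state_transfer}(ii), applied with $\chi^-=\chi(t_j^-)$ and $\chi^+=\chi(t_j^+)$.

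The main obstacle is bookkeeping rather than estimates. One must check that the entropy $S_h$ defined on broken states matches the assembled entropy on each side of every $t_j$; this holds because $\bm u_b^\pm=\overline{\P}^{\chi^\pm}\bm u_g^\pm$. One must also note that, between switches, the state is continuous in time, so the one-sided limits exist. The remaining concern is integrability of the possibly discontinuous $\sigma_k$ term, and I would handle it with the boundedness observation above.
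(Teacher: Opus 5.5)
Your proposal is correct and follows essentially the same route as the paper: split $[0,T]$ at the repartitioning times, apply the fixed-partition equality of \cref{thm:cdsbp_balance} via \cref{eq:cdsbp_entropy_representations} on each interval, telescope with the transfer jumps, and finish with the sign argument and $\Delta S_{{\rm tr},j}\le0$ from \cref{thm:cdsbp_state_transfer}(ii). Your integrability concern is unnecessary. In the dissipative mode $\sigma_k^{\rm ed}\delta_k=\min(\delta_k,0)$, so $[1-\sigma_k(1-\eta_k)]\delta_k=\max(\delta_k,0)+\eta_k\min(\delta_k,0)$ is continuous in $\delta_k$, and the integrand does not jump at sign changes of $\delta_k$.
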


\begin{proof}
Set \(t_0=0\) and \(t_{N_s+1}=T\), and let \(\chi_j\) denote the fixed
partition on \([t_j,t_{j+1})\), with the final interval understood to include
\(T\). Using \cref{eq:cdsbp_entropy_representations}, multiplying
\cref{eq:cdsbp_scheme} by
\((\bm w_g^{\chi_j})^T\overline{\H}_g^{\chi_j}\) and applying the chain rule
in time gives the fixed-partition entropy identity in
\cref{eq:cdsbp_global_entropy} for \(t\in(t_j,t_{j+1})\):
\begin{align*}
    \der[S_h(\bm u_b(t))]{t}
    ={}&
    -\sum_{\gamma\in\Gamma_{\rm D}^I(t)}
    \varepsilon_*^\gamma(t)
    -\sum_{\Omega_k\in\mathcal T_h}
    \left(
        [1-\sigma_k(1-\eta_k)]\delta_k
        +(1-\eta_k)\varepsilon_k
    \right).
\end{align*}
At each repartitioning time \(t_j\),
\cref{thm:cdsbp_state_transfer}, specifically
\cref{eq:cdsbp_transfer_entropy}, ensures that the entropy jump satisfies
\[
    \Delta S_{{\rm tr},j}
    =
    S_h(\bm u_b(t_j^+))-S_h(\bm u_b(t_j^-))
    \le0,
\]
where \(-\) and \(+\) denote states immediately before and after the
transfer, respectively. Adding the entropy changes over the
fixed-partition intervals and the instantaneous transfer jumps gives
\[
    \Delta S_h^{0,T}
    =
    \sum_{j=0}^{N_s}\int_{t_j}^{t_{j+1}}
    \der[S_h(\bm u_b(t))]{t}\,\dd t
    +
    \sum_{j=1}^{N_s}\Delta S_{{\rm tr},j},
\]
where the time derivative is taken within each fixed-partition interval
and excludes the instantaneous jumps. Substituting the equality in \cref{eq:cdsbp_global_entropy} into these integrals and combining them over \([0,T]\) proves \cref{eq:cdsbp_dynamic_entropy_equality}; using the upper bound in the same equation and
\(\Delta S_{{\rm tr},j}\le0\) proves \cref{eq:cdsbp_dynamic_entropy_inequality}.
\end{proof}

\section{Consistency}\label{sec:consistency_residual_correction}
The purpose of this section is to establish the consistency of the algebraic
correction for an arbitrary fixed C/D partition. We first determine how the
correction distribution vector scales with mesh refinement. We then combine
this result with an estimate of the entropy defect to bound the correction
contribution after assembly and application of the inverse norm.

\subsection{Scaling of the correction distribution vector}
The regularized distribution vector \(\bm\alpha_{k,\epsilon}\) in
\cref{eq:alpha_regularized} depends on the vector \(\bm a_k\)
and its entropy moment \(b_k=\bm w_k^T\bm a_k\). The next two results first bound these quantities
for a smooth state and then establish the resulting mesh scaling of
\(\bm\alpha_{k,\epsilon}\).

\begin{lemma}\label{lem:correction_terms_scaling}
    Let \(\bm u_k\) and \(\bm w_k\) be the nodal restrictions of a smooth
    admissible conservative state and its entropy variables. Then, there are
    constants \(C_a,C_b>0\), independent of \(k\) and \(h\), such that
    \[
        \|\bm a_k\|_2
        \le
        C_a h^{d-1},
        \qquad
        |b_k|
        \le
        C_b h^d.
    \]
    Consequently, \(\|\bm a_k\|_2=\fn{O}(h^{d-1})\), \(|b_k|=\fn{O}(h^d)\), and \({b_k}/{V_k}=\fn{O}(1)\). 
\end{lemma}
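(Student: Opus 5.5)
The plan is to exploit the zero-row-sum structure of \(\hat{\mathsf L}\): each component of \(\bm a_k\) is a weighted sum of nodal differences \(\bm u_{k,m}-\bm u_{k,n}\). These differences are \(\fn O(h)\) for a smooth state because the nodes of \(\Omega_k\) lie within distance \(h_k\le C_hh\) of each other, by \cref{eq:local_global_mesh_size}. Combined with \(\mu_k=\fn O(h^{d-2})\) from \cref{eq:mu_scaling}, this gives the first bound. The bound on \(b_k\) then follows from the edge expansion \cref{eq:b_k_expanded} in the proof of \cref{lem:graph_distribution_positive}, which involves products of two such differences.

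\emph{Step 1 (bound on \(\bm a_k\)).} Since \(\hat{\mathsf L}\bm1_k=\bm0\), the \(m\)-th nodal block is
\[
    [\bm a_k]_m=\mu_k\sum_{n\ne m}\ell_{mn}(\bm u_{k,m}-\bm u_{k,n}).
\]
The weights \(\ell_{mn}\) are fixed reference quantities independent of \(k\) and \(h\), and \(n_p\) is fixed. I will apply the mean value theorem along the segment joining \(\bm x_{k,m}\) and \(\bm x_{k,n}\), or along the mapped segment \(\fn M_k\) of the reference segment if \(\Omega_k\) is curved. Using \(\sigma_{\max}(\mathsf J_k)\le C_Jh_k\), this gives \(\|\bm u_{k,m}-\bm u_{k,n}\|_2\le C\|\nabla\fnb U\|_{L^\infty}h\). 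Multiplying by \(\mu_k\le C_\mu h^{d-2}\) and summing over the finitely many nodes yields \(\|\bm a_k\|_2\le C_ah^{d-1}\).

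\emph{Step 2 (bound on \(b_k\)).} From \cref{eq:b_k_expanded},
\[
    |b_k|\le\mu_k\sum_{m<n}\ell_{mn}\|\bm w_{k,m}-\bm w_{k,n}\|_2\|\bm u_{k,m}-\bm u_{k,n}\|_2 .
\]
The state \(\fnb W(\fnb U)\) is smooth on the admissible set because the entropy is twice differentiable there. Hence the same mean-value argument bounds the entropy-variable difference by \(Ch\), so \(|b_k|\le C_\mu h^{d-2}\cdot Ch^2=C_bh^d\). Finally, \(V_k\ge n_pc_Hh^d\) by \cref{eq:quasi_uniformity,eq:V_k}, so \(|b_k|/V_k\le C_b/(n_pc_H)=\fn O(1)\).

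\emph{Main obstacle.} The delicate point is making the constants uniform in \(k\) and \(h\). For \(\bm U\), this needs a global bound on \(\nabla\fnb U\) over \(\Omega\). For \(\bm W\), it needs a bound on \(\partial\fnb W/\partial\fnb U=\fn S''\) over the range of the smooth state, which I will take to be a compact subset of the admissible set. Such a compact range exists because \(\fnb U\) is continuous on the compact domain \(\Omega\) (and on \([0,T]\) if time is included), so the continuous \(\fn S''\) is bounded there. A secondary point is justifying the mean-value estimate on mapped elements. For this I will write \(\bm u_{k,m}-\bm u_{k,n}=\fnb U(\fn M_k(\bm\xi_m))-\fnb U(\fn M_k(\bm\xi_n))\), integrate along the straight reference segment, and use that \(\hat\Omega\) is convex (or that reference nodes are joined by paths of bounded length). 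The chain rule together with the singular-value bound on \(\mathsf J_k\) then gives the factor \(h_k\lesssim h\).
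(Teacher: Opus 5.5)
Your proposal is correct and follows essentially the same route as the paper. Both arguments use three facts: $\hat{\mathsf L}$ annihilates constants (it is symmetric with zero row sums), smooth fields vary by $\fn O(h)$ over an element of diameter $h_k\le C_h h$, and $\mu_k\le C_\mu h^{d-2}$. The only difference is bookkeeping: the paper subtracts a single point value, writing $\bm a_k=\mu_k(\hat{\mathsf L}\otimes\I_{n_c})(\bm u_k-\overline{\I}_k\bm u_{0,k})$ and $b_k=(\bm w_k-\overline{\I}_k\bm w_{0,k})^T\bm a_k$, whereas you use pairwise nodal differences and the edge expansion \cref{eq:b_k_expanded}. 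These are equivalent, and your handling of uniform constants and mapped elements is somewhat more explicit than the paper's.
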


\begin{proof}
    Choose a point in \(\Omega_k\), and let \(\bm u_{0,k}\) and
    \(\bm w_{0,k}\) be the corresponding values of the smooth
    conservative and entropy-variable fields. Smoothness and
    \cref{eq:local_global_mesh_size} give
    \[
        \|\bm u_k-\overline{\I}_k\bm u_{0,k}\|_2
        =
        \fn{O}(h),
        \qquad
        \|\bm w_k-\overline{\I}_k\bm w_{0,k}\|_2
        =
        \fn{O}(h).
    \]
    Since \(\hat{\mathsf L}\bm1_k=\bm0\), the upper bound for
    \(\mu_k\) in \cref{eq:mu_scaling} and the fixed reference matrix
    \(\hat{\mathsf L}\) give
    \[
        \bm a_k
        =
        \mu_k(\hat{\mathsf L}\otimes\I_{n_c})
        (\bm u_k-\overline{\I}_k\bm u_{0,k})
        =
        \fn{O}(h^{d-1}).
    \]
    Furthermore, \(\overline{\I}_k^T\bm a_k=\bm0\), and hence
    \[
        b_k
        =
        (\bm w_k-\overline{\I}_k\bm w_{0,k})^T\bm a_k
        =
        \fn{O}(h^d).
    \]
    Finally, \(V_k\ge n_pc_Hh^d\) by
    \cref{eq:quasi_uniformity,eq:V_k}, so \(b_k/V_k=\fn{O}(1)\).
\end{proof}

\begin{theorem}\label{thm:reference_normalization}
    Under the assumptions of \(\cref{lem:correction_terms_scaling}\), for fixed \(\epsilon_{\rm mach}>0\), independent of \(k\) and \(h\), we
    have
    \[
        \|\bm\alpha_{k,\epsilon}\|_2
        \le
        \frac{C_a}
        {2n_pc_H\sqrt{\epsilon_{\rm mach}}}h^{-1},
    \]
    and hence \(\bm\alpha_{k,\epsilon}=\fn{O}(h^{-1})\).
\end{theorem}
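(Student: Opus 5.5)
The plan is to bound the scalar regularization factor in \cref{eq:alpha_regularized} uniformly in \(b_k\), and then combine that bound with \cref{lem:correction_terms_scaling}. Write the second form of \cref{eq:alpha_regularized} as
\[
    \bm\alpha_{k,\epsilon}
    =
    \frac{1}{V_k}\,
    \frac{x_k}{x_k^2+\epsilon_{\rm mach}}\,\bm a_k,
    \qquad
    x_k\coloneqq \frac{b_k}{V_k}.
\]
With this form the whole question reduces to a scalar function of \(x_k\), a volume factor, and the norm of \(\bm a_k\).

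The first and key step is the elementary AM--GM bound \(x^2+\epsilon_{\rm mach}\ge 2\sqrt{\epsilon_{\rm mach}}\,|x|\), valid for every real \(x\). It gives
\[
    \left|\frac{x_k}{x_k^2+\epsilon_{\rm mach}}\right|
    \le
    \frac{1}{2\sqrt{\epsilon_{\rm mach}}}.
\]
This bound holds independently of the sign and size of \(b_k\), including \(b_k=0\). Because of it, we do not need any lower bound on \(b_k\); that is the only potentially delicate point, since \(b_k\) may be arbitrarily small for nearly constant states. It also explains the factor \(2\) and the \(\sqrt{\epsilon_{\rm mach}}\) appearing in the constant.

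Next, take the Euclidean norm, which gives \(\|\bm\alpha_{k,\epsilon}\|_2\le \|\bm a_k\|_2/(2\sqrt{\epsilon_{\rm mach}}\,V_k)\). Two earlier estimates then finish the bound. The lower volume estimate \(V_k\ge n_pc_Hh^d\) follows from \cref{eq:quasi_uniformity,eq:V_k}. The bound \(\|\bm a_k\|_2\le C_a h^{d-1}\) is \cref{lem:correction_terms_scaling}. Substituting both yields
\[
    \|\bm\alpha_{k,\epsilon}\|_2
    \le
    \frac{C_a h^{d-1}}{2\sqrt{\epsilon_{\rm mach}}\,n_pc_Hh^d}
    =
    \frac{C_a}{2n_pc_H\sqrt{\epsilon_{\rm mach}}}\,h^{-1},
\]
which is exactly the stated constant.

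Since \(C_a\), \(n_p\), \(c_H\), and \(\epsilon_{\rm mach}\) are independent of \(k\) and \(h\), the estimate \(\bm\alpha_{k,\epsilon}=\fn{O}(h^{-1})\) follows. No step presents a real obstacle beyond choosing the AM--GM bound rather than trying to bound \(1/b_k\).
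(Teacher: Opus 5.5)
Your proposal is correct and follows the paper's proof: the same AM--GM bound on the denominator, followed by $\|\bm a_k\|_2\le C_a h^{d-1}$ from the lemma and $V_k\ge n_pc_Hh^d$. The one cosmetic difference is that you state the bound in the volume-normalized variable $x_k=b_k/V_k$, which covers $b_k=0$ directly instead of treating it as a separate case.
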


\begin{proof}
    If \(b_k=0\), then \(\bm\alpha_{k,\epsilon}=\bm0\), and the result
    follows immediately. If \(b_k\ne0\), the arithmetic--geometric mean
    inequality gives
    \[
        b_k^2+\epsilon_{\rm mach}V_k^2
        \ge
        2|b_k|V_k\sqrt{\epsilon_{\rm mach}}.
    \]
    Therefore, using \cref{lem:correction_terms_scaling} and
    \(V_k\ge n_pc_Hh^d\), we obtain
    \[
        \begin{aligned}
        \|\bm\alpha_{k,\epsilon}\|_2
        &=
        \frac{|b_k|}{b_k^2+\epsilon_{\rm mach}V_k^2}
        \|\bm a_k\|_2
        \le
        \frac{\|\bm a_k\|_2}
        {2V_k\sqrt{\epsilon_{\rm mach}}}
        \le
        \frac{C_a}
        {2n_pc_H\sqrt{\epsilon_{\rm mach}}}h^{-1}.
        \end{aligned} \qedhere
    \]
\end{proof}

\subsection{Consistency of the corrected discretizations}
We next combine the distribution bound with a high-order estimate
of the entropy defect to establish consistency of the corrected
discretization. The optional
dissipation vector \(\bm d_{k,\epsilon}\) is a separate modeling term and is
not included in the consistency analysis below. The targeted
artificial-dissipation operator used in the numerical experiments is
design-order accurate for smooth solutions under the parameter scaling in \cite{worku2026targeted}.

\begin{lemma}\label{lem:entropy_defect_scaling}
    Consider a shape-regular, quasi-uniform family of affine elements and
    degree-\(p\), diagonal-\(\E\) SBP operators, with \(p\ge1\), whose volume
    and facet quadrature rules are exact for polynomials of total degrees at
    least \(2p-1\) and \(2p\), respectively. Let \(\bm u_k\) be the nodal
    restriction of a smooth admissible solution, and assume that the
    corresponding entropy variables, physical fluxes, and entropy fluxes have
    bounded derivatives through order \(2p+1\) on the computational domain.
    Then, the entropy defect in \cref{eq:cdsbp_entropy_defect} satisfies
    \begin{equation}\label{eq:entropy_defect_superconvergence}
        \delta_k
        =
        \fn{O}\!\left(h^{2p+d}\right).
    \end{equation}
\end{lemma}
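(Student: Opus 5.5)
The plan is to rewrite \(\delta_k\) as a quadrature error and then use exactness of the volume and facet rules. The first step removes the boundary matrix from the volume term. Using \(\Q_{x_ik}=\H_k\D_{x_ik}\) and \(\Q_{x_ik}+\Q_{x_ik}^T=\E_{x_ik}\) together with \cref{eq:local_sbp_boundary}, we write
\[
    \bm w_k^T\overline{\Q}_{x_ik}\bm f_k^{(i)}
    =
    \bm w_k^T\overline{\E}_{x_ik}\bm f_k^{(i)}
    -\bm f_k^{(i)T}\overline{\Q}_{x_ik}\bm w_k .
\]
Since \(\R_{\gamma k}\) is a Boolean extraction and the \(\E\) matrices are diagonal, the first term equals \(\sum_\gamma \bm1_\gamma^T\B_\gamma \N_{x_i\gamma k}\R_{\gamma k}(\bm w_k\circ\bm f_k^{(i)})\). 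Combining it with the entropy-flux facet term, and using \(\fnb W^T\fnb F^{(i)}-\fn G^{(i)}=\psi^{(i)}\) pointwise, gives
\[
    \delta_k
    =
    \sum_{\gamma\subset\Gamma_k}\bm1_\gamma^T\B_\gamma\bm\psi_{\gamma k}^n
    -\sum_i \bm1_k^T\H_k\bigl(\bm f_k^{(i)}\circ\overline{\D}_{x_ik}\bm w_k\bigr),
\]
where \(\circ\) denotes the nodal (componentwise) inner product over the \(n_c\) components. The continuous analogue vanishes exactly, because \(\nabla\cdot\psi = \fnb F^{(i)T}\partial_{x_i}\fnb W\) by \(\partial\psi^{(i)}/\partial\fnb W=[\fnb F^{(i)}]^T\). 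Hence \(\int_{\Gamma_k}\psi\cdot n=\int_{\Omega_k}\fnb F^{(i)T}\partial_{x_i}\fnb W\), and \(\delta_k\) equals this zero minus its discrete approximation.

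The obvious estimate is too crude. It would bound the facet quadrature error by \(\fn O(h^{2p+1}h^{d-1})\), the volume quadrature error by \(\fn O(h^{2p}h^d)\), and the derivative error \(\D_{x_ik}\bm w_k-\partial_{x_i}\fnb W=\fn O(h^p)\) weighted by \(\H_k\) by only \(\fn O(h^{p+d})\). The derivative error is the obstacle, so the plan is to exploit polynomial exactness instead of pointwise accuracy. On each affine element, let \(\fnb W_p\) be the degree-\(p\) Taylor (or interpolating) polynomial of \(\fnb W\), and \(\fnb F_{p-1}^{(i)}\) the degree-\(p-1\) Taylor polynomial of \(\fnb F^{(i)}\), about a point of \(\Omega_k\). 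Writing \(\bm w_k=\bm w_{p}+\bm e_w\) with \(\bm e_w=\fn O(h^{p+1})\), we have \(\D\bm w_p=\partial\fnb W_p\) exactly at the nodes. The SBP identity also gives \(\bm1^T\H_k(\bm v\circ \D\bm e_w)=\bm v^T\E\bm e_w-(\D\bm v)^T\H\bm e_w\). For \(\bm v\) the nodal restriction of a smooth \(\fnb F\), this is \(\fn O(h^{d-1}h^{p+1})+\fn O(h^d h^{p+1})\), which is still short of \(h^{2p+d}\).

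To close the gap, I would split both factors. First, I would replace \(\bm f^{(i)}\) by \(\bm f_{p-1}+\bm e_f\) with \(\bm e_f=\fn O(h^p)\). The pairing of \(\bm e_f\) with \(\D\bm e_w\) is then \(\fn O(h^d\,h^p\,h^p)\), and only the polynomial-weighted pieces remain. The SBP identity turns \(\bm1^T\H(\bm f_{p-1}\circ\D\bm e_w)\) into facet and volume sums of \(\bm e_w\) against \(\fnb F_{p-1}\) and \(\partial\fnb F_{p-1}\). These are polynomial-weighted quadratures of \(\bm e_w\), which I would compare with exact integrals of \(\fnb W-\fnb W_p\) using exactness of degrees \(2p-1\) and \(2p\). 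Where the weighted-polynomial quadratures are exact, the discrete entropy identity should collapse to the vanishing continuous one, leaving only \(\fn O(h^{2p})\) products times the measure \(h^d\). I would also treat the residual \(\bm e_w\circ\bm e_f\)-type terms appearing on the facets via \(\psi\), checking that each one has total degree excess \(\ge 2p\).

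I expect this step to be the main obstacle. It requires organizing the Taylor remainders so that every surviving term is either a product of two \(\fn O(h^p)\) errors or a quadrature error for an integrand whose polynomial part has degree at most \(2p-1\) (volume) or \(2p\) (facet). Tracking the facet measure \(h^{d-1}\) against the one extra power of \(h\) from the facet rule's higher exactness is delicate. The assumption of bounded derivatives through order \(2p+1\) is what makes the remainders of the degree-\(2p\) and \(2p+1\) Taylor expansions of \(\psi\) and \(\fnb F^{T}\partial\fnb W\) uniformly controlled, with \cref{eq:local_global_mesh_size} converting \(h_k\) to \(h\).
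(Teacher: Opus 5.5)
Your route is correct and closes, but it is organized differently from the paper's. The paper works with the first form in \cref{eq:common_entropy_defect} and expands $\fnb W$, $\fnb F^{(i)}$, and $\fn G^{(i)}$ into homogeneous Taylor pieces through degree $2p$. It then shows that every bilinear term $\bm w_{k,m}^T\overline{\Q}_{x_ik}\bm f_{k,n}^{(i)}$ with $m+n\le 2p$ equals its exact integral. It differentiates $\fnb F_{k,n}^{(i)}$ if $n\le p$, and otherwise moves the derivative onto $\fnb W_{k,m}$ with $m\le p-1$ by SBP. Everything else is bounded by size counting, and the truncated integrand is compared with the vanishing continuous one. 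You instead use the second form, split each factor only once (at degree $p$ for $\fnb W$ and $p-1$ for $\fnb F^{(i)}$), and rely on standard quadrature-error bounds for smooth integrands.

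The step you flag as the main obstacle is in fact routine. Set $\fnb E_w\coloneqq\fnb W-\fnb W_p$ and $\fnb E_f^{(i)}\coloneqq\fnb F^{(i)}-\fnb F_{p-1}^{(i)}$. Under the stated derivative bounds, each of the following quadratures has error $\fn O(h^{2p+d})$: the facet quadrature of $\psi$ (exact to degree $2p$, facet measure $\fn O(h^{d-1})$); the volume quadrature of $\fnb F^{(i)T}\partial_{x_i}\fnb W_p$ (exact to degree $2p-1$); and, after your SBP step with $\overline{\D}_{x_ik}\bm f_{p-1}^{(i)}$ exact, the facet and volume quadratures of $\fnb F_{p-1}^{(i)T}\fnb E_w$ and $(\partial_{x_i}\fnb F_{p-1}^{(i)})^T\fnb E_w$. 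Hence
\[
    \delta_k
    =
    \int_{\Omega_k}\sum_{i=1}^{d}\Bigl[\fnb F^{(i)T}\partial_{x_i}\fnb W-\fnb F^{(i)T}\partial_{x_i}\fnb W_p-\fnb F_{p-1}^{(i)T}\partial_{x_i}\fnb E_w\Bigr]\dd\Omega
    -\sum_{i=1}^{d}\bm 1_k^T\H_k\bigl(\bm e_f^{(i)}\circ\overline{\D}_{x_ik}\bm e_w\bigr)
    +\fn O(h^{2p+d}).
\]
The integrand collapses to $\sum_i\fnb E_f^{(i)T}\partial_{x_i}\fnb E_w=\fn O(h^{2p})$, and the discrete product term is $\fn O(h^{2p+d})$, as you noted. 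Because $\psi$ is never split, no $\bm e_w\circ\bm e_f$ terms arise on the facets, and no ``polynomial part'' bookkeeping is needed.

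Your version makes the mechanism transparent: the $h^{2p}$ is the product of the $\fn O(h^p)$ errors in $\fnb F^{(i)}$ and $\partial_{x_i}\fnb W$. It also shows why the facet rule needs one more degree of exactness than the volume rule. The paper's version is purely algebraic---polynomial exactness plus size counting. It avoids invoking a Bramble--Hilbert-type quadrature estimate for non-polynomial integrands, at the cost of heavier bookkeeping over all index pairs.
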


\begin{proof}
    Chan \cite[Sec.~4.4, Eq.~(27)]{chan2025artificial} obtains
    \(\fn{O}(h^{2p+2+d})\) under exact integration for a related DG volume
    entropy residual. Using the SBP properties and the stated quadrature
    exactness, we obtain the discrete estimate \(\fn{O}(h^{2p+d})\); its proof is provided in
    \ref{app:entropy_defect_scaling}.
\end{proof}

The estimate in \cref{lem:entropy_defect_scaling} is an elementwise
functional superconvergence result. Indeed, \(\delta_k\) is the difference
between discrete entropy-production and facet entropy-flux functionals that
approximate the same exact element entropy balance. Since a quasi-uniform mesh
contains \(\fn{O}(h^{-d})\) elements, the local rate
\(\fn{O}(h^{2p+d})\) corresponds to the \(\fn{O}(h^{2p})\) global rate
expected for superconvergent functionals of degree-\(p\) SBP discretizations
\cite{hicken2011superconvergent,worku2021simultaneous,worku2023entropy}.

\begin{theorem}\label{thm:cdsbp_consistency}
On a shape-regular, quasi-uniform family of affine meshes, consider the hybrid
discretization in \cref{eq:cdsbp_scheme} with degree-\(p\), diagonal-\(\E\) SBP
operators, any fixed C/D facet partition, and the LLF flux in
\cref{eq:new_lf_flux} on D-SBP facets. Let the exact solution be smooth and
admissible. Then the base operator after partial assembly is degree-\(p\)
consistent at every global solution node. Moreover, the assembled correction
term satisfies, componentwise,
\begin{equation}\label{eq:cdsbp_correction_scaling}
    (\overline{\H}_g^\chi)^{-1}
    \sum_{\Omega_k\in\mathcal T_h}
    (\overline{\P}_k^\chi)^T\bm c_{k,\epsilon}
    =\fn{O}(h^{2p-1}).
\end{equation}
Consequently, the assembled regularized correction term does not reduce the
\(\fn{O}(h^p)\) consistency order of the base discretization.
\end{theorem}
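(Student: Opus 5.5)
The plan has two parts: consistency of the base operator at each global node, and a bound on the assembled correction.

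\emph{Base operator.} Fix a global node \(j\) of the partition \(\chi\), and let \(\bm u_k\) be the nodal restriction of the smooth exact solution. The \(j\)-th entry of \((\overline{\H}_g^\chi)^{-1}\sum_k(\overline{\P}_k^\chi)^T\bm q_k^\chi\) is a weighted average of the element contributions \(\bm q_k^\chi/w_{k,m}\) over the local copies \((k,m)\) assigned to \(j\), with weights \(w_{k,m}/\omega_j\). Every copy carries the same smooth trace. On a D-SBP facet the exact traces agree, so the LLF dissipation vanishes and \(\bm f_{\gamma k}^*=\bm f_{\gamma k}^n\); hence the SAT term in \cref{eq:cdsbp_base_residual} is zero. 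Each element then contributes \(-\H_k^{-1}\overline{\Q}_{x_ik}\bm f_k^{(i)}=-\overline{\D}_{x_ik}\bm f_k^{(i)}\). By \cref{eq:D_accuracy} this equals \(-\partial_{x_i}\fnb F^{(i)}+\fn O(h^p)\) at every copy. A convex combination of these values is therefore also \(-\sum_i\partial_{x_i}\fnb F^{(i)}(\bm x_j)+\fn O(h^p)\). I expect this averaging argument to be the main point needing care, since the C-SBP assembled operator is not \(\D_{x_ik}\) itself. Nodes whose copies are all interior to a single element reduce trivially to the same statement.

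\emph{Correction term.} Each element contributes \(\bm c_{k,\epsilon}=\sigma_k\bm\alpha_{k,\epsilon}\delta_k\) with \(\sigma_k\in\{0,1\}\). \Cref{thm:reference_normalization} gives \(\|\bm\alpha_{k,\epsilon}\|_2\le Ch^{-1}\), and \cref{lem:entropy_defect_scaling} gives \(|\delta_k|\le Ch^{2p+d}\). Hence every nodal block satisfies \(\|\bm c_{k,\epsilon}\|_\infty\le Ch^{2p+d-1}\), with constants independent of \(k\). Assembly sums over the copies of node \(j\). Their number is at most the number of elements sharing a node, which is bounded by shape regularity, but it can also be bounded without counting: \(\omega_j\ge c_Hh^d\) for any single copy by \cref{eq:quasi_uniformity}. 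Since \(\H_g^\chi\) is diagonal, each entry is bounded by \(\omega_j^{-1}\sum_{(k,m)\mapsto j}|[\bm c_{k,\epsilon}]_m|\). This is at most \(N_{\max}Ch^{2p+d-1}/(c_Hh^d)=\fn O(h^{2p-1})\), which gives \cref{eq:cdsbp_correction_scaling}.

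\emph{Conclusion.} Since \(p\ge1\), we have \(2p-1\ge p\), so the correction is \(\fn O(h^p)\). It therefore does not reduce the degree-\(p\) consistency of the base operator. The dissipation term \(\bm d_{k,\epsilon}\) is excluded, as the paper states.
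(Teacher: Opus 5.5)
Your proposal is correct and follows essentially the same route as the paper's proof. Collocated exact traces make the LLF SAT vanish, partial assembly forms an $\H$-weighted convex average of the degree-$p$ accurate values $-\sum_i\overline{\D}_{x_ik}\bm f_k^{(i)}$, and the correction bound combines $\bm c_{k,\epsilon}=\fn{O}(h^{2p+d-1})$ with $(\overline{\H}_g^\chi)^{-1}=\fn{O}(h^{-d})$ and a bounded number of copies per global node. Two small points, neither a gap: your ``without counting'' remark is only realized if you use $\omega_j\ge N_jc_Hh^d$ for a node with $N_j$ copies, which cancels the copy count, although the bound you actually wrote with $N_{\max}$ is valid; and $\sigma_k^{\rm ed}$ equals $1/2$ when $\delta_k=0$ rather than lying in $\{0,1\}$, which is harmless since $\bm c_{k,\epsilon}=\bm 0$ then.
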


\begin{proof}
The volume term in the element base residual \(\bm q_k^\chi\) is
\[
    -\sum_{i=1}^{d}\overline{\Q}_{x_i k}\bm f_k^{(i)}
    =
    -\overline{\H}_k
    \sum_{i=1}^{d}\overline{\D}_{x_i k}\bm f_k^{(i)},
\]
whose pointwise consistency follows from \cref{eq:D_accuracy}. On a D-SBP
facet, the diagonal-\(\E\) restriction extracts the values at matching facet
nodes. For the nodal sampling of the exact solution,
\[
    \bm u_{\gamma k}=\bm u_{\gamma v},
    \qquad
    \bm f_{\gamma k}^{*}=\bm f_{\gamma k}^{n},
\]
so the D-SBP SAT vanishes. For an SBP extrapolation operator to the facet
nodes (not shared with the volume nodes) with componentwise trace error \(\fn{O}(h^{p+1})\), the two traces would instead differ by \(\fn{O}(h^{p+1})\); the resulting flux difference is
\(\fn{O}(h^{p+d})\) and contributes \(\fn{O}(h^p)\) after application of the
inverse norm. On a conforming, shape-regular mesh, the number of
element-local nodal copies assigned to any global solution node is bounded
independently of \(h\). Partial assembly therefore forms an \(\H\)-weighted
average of a bounded number of degree-\(p\) approximations and retains the
same order. Applying the entropy-defect estimate in \cref{lem:entropy_defect_scaling} and using \cref{thm:reference_normalization}, we obtain
\[
    \bm c_{k,\epsilon}
    =\sigma_k\bm\alpha_{k,\epsilon}\delta_k
    =\fn{O}(h^{2p+d-1}).
\]
Each diagonal entry of \(\H_g^\chi\) scales as \(h^d\), and hence
\((\overline{\H}_g^\chi)^{-1}=\fn{O}(h^{-d})\). Partial assembly sums only a
bounded number of element-local corrections and therefore preserves the
\(\fn{O}(h^{2p+d-1})\) scaling of \(\bm c_{k,\epsilon}\). Thus,
\cref{eq:cdsbp_correction_scaling} follows, and since
\(2p-1\ge p\) for \(p\ge1\), the correction retains the design order.
\end{proof}

\section{Relation to existing entropy-correction constructions}
\label{sec:relation_existing_corrections}

The residual-correction construction of AOR
\cite{abgrall2022reinterpretation}, the ECAV method of Chan
\cite{chan2025artificial}, and the present method can be compared by
separately analyzing the scalar entropy defect and distribution vector. In particular, in the diagonal-\(\E\) SBP setting considered in this work, and with the compatible AOR facet coupling described below, all three constructions use the same element-local scalar defect, as shown below in \cref{eq:shared_correction_defect}. Their correction mechanisms differ in how the defect is distributed, activated, and regularized.

\subsection{Element-local form of the compatible AOR correction}

Let \(\bm q_k^{\rm D}\) denote the all-D specialization of
\(\bm q_k^\chi\) in \cref{eq:cdsbp_base_residual}. A SAT-dependent correction may instead be formed from the defect as
\begin{equation}\label{eq:sat_dependent_defect}
    \delta_k^{\rm SAT}
    \coloneqq
    -\bm w_k^T\bm q_k^{\rm D}
    -\sum_{\gamma\subset\Gamma_k}
    \bm1_\gamma^T\B_\gamma\bm g_{\gamma k}^{\star},
\end{equation}
where \(\bm g_{\gamma k}^{\star}\) is a consistent numerical entropy flux,
written using the outward normal of \(\Omega_k\). Applying
\cref{eq:entropy_potential} at the facet nodes and using the normal-flux
definitions in \cref{eq:cdsbp_normal_fluxes} we can write
\begin{equation}\label{eq:entropy_potential_identity}
    \bm w_{\gamma k}^T\overline{\B}_\gamma
    \bm f_{\gamma k}^{n}
    =
    \bm1_\gamma^T\B_\gamma
    \left(\bm g_{\gamma k}^{n}+\bm\psi_{\gamma k}^{n}\right).
\end{equation}
Expanding \(-\bm w_k^T\bm q_k^{\rm D}\) in
\cref{eq:sat_dependent_defect} and using the element-local defect in
\cref{eq:cdsbp_entropy_defect} gives
\begin{align}
    \delta_k^{\rm SAT}
    &=
    \delta_k
    +
    \sum_{\gamma\subset\Gamma_k}\mathcal E_{\gamma k}^{\star},
    \label{eq:sat_defect_decomposition}
    \\
    \mathcal E_{\gamma k}^{\star}
    &\coloneqq
    \bm w_{\gamma k}^T\overline{\B}_\gamma\bm f_{\gamma k}^{*}
    -\bm1_\gamma^T\B_\gamma\bm g_{\gamma k}^{\star}
    -\bm1_\gamma^T\B_\gamma\bm\psi_{\gamma k}^{n}. \label{eq:sat_defect_decomposition_leftover}
\end{align}
Thus, all explicit dependence of \(\delta_k^{\rm SAT}\) on the
neighboring trace is contained in \(\mathcal E_{\gamma k}^{\star}\). Now, suppose that \(\bm f_{\gamma k}^{*}\) is a two-point
entropy-conservative facet flux, so equality holds in
\cref{eq:cdsbp_entropy_condition}. Following AOR
\cite[Eq.~(30)]{abgrall2022reinterpretation}, choose the compatible numerical
entropy flux satisfying
\begin{equation}\label{eq:sat_ec_entropy_flux}
    \bm1_\gamma^T\B_\gamma\bm g_{\gamma k}^{\star,\rm EC}
    \coloneqq
    \frac12(\bm w_{\gamma k}+\bm w_{\gamma v})^T
    \overline{\B}_\gamma\bm f_{\gamma k}^{*}
    -\frac12\bm1_\gamma^T\B_\gamma
    (\bm\psi_{\gamma k}^{n}-\bm\psi_{\gamma v}^{n}),
\end{equation}
where the sign before \(\bm\psi_{\gamma v}^{n}\) accounts for its definition
with the outward normal of \(\Omega_v\). For this compatible flux pair, define
\begin{equation}\label{eq:aor_entropy_defect}
    \delta_k^{\rm AOR}
    \coloneqq
    -\bm w_k^T\bm q_k^{\rm D}
    -\sum_{\gamma\subset\Gamma_k}
    \bm1_\gamma^T\B_\gamma
    \bm g_{\gamma k}^{\star,\rm EC}
    =
    \delta_k + \sum_{\gamma\subset \Gamma_k}\mathcal E_{\gamma k}^{\star,\rm EC},
\end{equation}
Substituting
\cref{eq:sat_ec_entropy_flux} into
\cref{eq:sat_defect_decomposition_leftover}, simplifying, and applying
\cref{eq:cdsbp_entropy_condition} gives
\begin{align}
    \fn{E}_{\gamma k}^{\star,\rm EC} &=
    \frac12(\bm w_{\gamma k}-\bm w_{\gamma v})^T
    \overline{\B}_\gamma\bm f_{\gamma k}^{*}
    -\frac12\bm1_\gamma^T\B_\gamma
    (\bm\psi_{\gamma k}^{n}+\bm\psi_{\gamma v}^{n})
    =
    0,
    \label{eq:sat_ec_cancellation}
\end{align}
which after substituting in \cref{eq:sat_defect_decomposition} yields
\begin{equation}\label{eq:sat_ec_local_defect}
    \delta_k^{\rm AOR}=\delta_k.
\end{equation}

To identify the remaining difference, we use the AOR distribution matrix in
\cref{eq:mean_subtracted_distribution_matrices} to define
\begin{align}
    \bm a_k^{\rm AOR}
    &\coloneqq
    (\mathsf L_k^{\rm AOR}\otimes\I_{n_c})\bm w_k,
    \qquad
    b_k^{\rm AOR}
    \coloneqq
    \bm w_k^T\bm a_k^{\rm AOR}.
    \label{eq:aor_distribution}
\end{align}
When \(b_k^{\rm AOR}\ne0\), the AOR correction vector corresponding to the correction in \cref{eq:cdsbp_correction} is
\[
    \bm c_k^{\rm AOR}
    =
    \frac{\bm a_k^{\rm AOR}}{b_k^{\rm AOR}} \delta_k^{\rm AOR}.
\]
The present method instead uses the conservative-variable distribution vector,
\(\bm\alpha_{k,\epsilon}\), given in \cref{eq:alpha_regularized}.

\begin{remark}\label{rem:aor_element_local_correction}
For collocated facet and volume nodes and the compatible
entropy-conservative flux pair in \cref{eq:sat_ec_entropy_flux},
\cref{eq:sat_ec_local_defect} gives an element-local form of the
AOR correction, \ie, both \(\delta_k\) and \(\bm a_k^{\rm AOR}\) use only the
state on \(\Omega_k\); no numerical entropy flux is needed to compute the
correction, and \(\bm f_{\gamma k}^{*}\) remains in the base D-SBP
residual.
\end{remark}

\subsection{Relation to entropy-correction artificial viscosity}

The ECAV construction of Chan \cite{chan2025artificial} begins with a volume entropy residual written using the discrete entropy projection. Specializing that construction to a diagonal-\(\E\) SBP discretization, the projected entropy variables are represented by \(\bm w_k\), and the corresponding entropy-projected conservative state is represented by \(\bm u_k\). In the present notation, the volume entropy residual of \cite{chan2025artificial} then becomes
\begin{equation}\label{eq:chan_entropy_defect}
    \delta_k^{\rm ECAV}
    \coloneqq
    -\sum_{i=1}^{d}
    (\bm f_k^{(i)})^T\overline{\Q}_{x_i k}\bm w_k
    +\sum_{\gamma\subset\Gamma_k}
    \bm1_\gamma^T\B_\gamma\bm\psi_{\gamma k}^{n}.
\end{equation}
For each coordinate direction, the SBP identity gives
\[
    -(\bm f_k^{(i)})^T\overline{\Q}_{x_i k}\bm w_k
    =
    \bm w_k^T\overline{\Q}_{x_i k}\bm f_k^{(i)}
    -
    \bm w_k^T\overline{\E}_{x_i k}\bm f_k^{(i)}.
\]
Summing over the coordinate directions and using the entropy-potential identity \cref{eq:entropy_potential_identity} gives
\begin{align}
    \delta_k^{\rm ECAV}
    &=
    \sum_{i=1}^{d}
    \bm w_k^T\overline{\Q}_{x_i k}\bm f_k^{(i)}
    -\sum_{\gamma\subset\Gamma_k}
    \bm w_{\gamma k}^T\overline{\B}_\gamma
    \bm f_{\gamma k}^{n}
    +\sum_{\gamma\subset\Gamma_k}
    \bm1_\gamma^T\B_\gamma\bm\psi_{\gamma k}^{n}
    \notag\\
    &=
    \sum_{i=1}^{d}
    \bm w_k^T\overline{\Q}_{x_i k}\bm f_k^{(i)}
    -\sum_{\gamma\subset\Gamma_k}
    \bm1_\gamma^T\B_\gamma\bm g_{\gamma k}^{n}
    =
    \delta_k.
    \label{eq:chan_local_defect_equivalence}
\end{align}
Therefore, we can write
\begin{equation}\label{eq:shared_correction_defect}
    \delta_k^{\rm AOR}
    =
    \delta_k^{\rm ECAV}
    =
    \delta_k,
\end{equation}
establishing the equivalence of the scalar entropy defect between the three methods.

As a result, the methods differ mainly due to the fact that AOR uses the distribution in \cref{eq:aor_distribution}; ECAV applies the entropy-producing part of \(\delta_k\) with a distribution vector constructed using viscous volume and SAT terms; and the present method distributes that part through \(\sigma_k^{\rm ed}\bm\alpha_{k,\epsilon}\). The regularization also differs in that AOR sets its
correction to zero for constant entropy variables, ECAV uses a fixed tolerance in its viscosity coefficient \cite{chan2025artificial}, and \cref{eq:alpha_moments} carries the present regularization factor \(1-\eta_k\) explicitly. In the present formulation, an
entropy-conservative facet flux gives \(\varepsilon_*^\gamma=0\) in
\cref{eq:cdsbp_entropy_condition}, while an entropy-stable flux such as LLF
gives \(\varepsilon_*^\gamma\ge0\), without changing \(\delta_k\) or
\(\bm\alpha_{k,\epsilon}\). Both remain independent of neighboring traces and
the C/D facet partition. The sensitivity associated with noncompatible SATs, \ie, SATs for which the numerical flux \(\bm f_{\gamma k}^{*}\) and the chosen numerical entropy flux \(\bm g_{\gamma k}^{\star}\) do not yield the cancellation in \cref{eq:sat_ec_cancellation}, is analyzed in \cref{app:sat_dependent_correction}.

\section{Numerical experiments}\label{sec:numerical_experiments}
The numerical experiments assess accuracy, long-time behavior, and robustness
of the hybrid method.  We use ``C-RCM'', ``D-RCM'', and ``CD-RCM'' for the
all-C, all-D, and hybrid C/D residual-correction discretizations,
respectively.  Unless stated otherwise, the RCM calculations use the
entropy-dissipative correction mode, \(\sigma_k=\sigma_k^{\rm ed}\).  The Euler
comparison baseline is the Hadamard-form discretization with the
entropy-conservative two-point flux of Ranocha \cite{ranocha2018comparison}.
We denote its C-SBP form by
``C-2pt'' and its D-SBP form with local Lax--Friedrichs (LLF) interface
dissipation by ``D-2pt''.  The LLF wave speed is the Guermond--Popov bound
\cite{guermond2016fast}, unless stated otherwise. All RCM calculations use the correction distribution in \cref{eq:alpha_regularized} with the components given in \cref{eq:coefficient_ab}. Schemes with the suffix ``AD'' use the volume artificial dissipation of
\cite{worku2026targeted} with its default parameters.

The prescribed regions and sensors used below act
elementwise and label each element C or D, denoting continuous or discontinuous
coupling, respectively. In this setup, an interior facet is D-SBP if either adjacent element is labeled D and is C-SBP otherwise. The labels are updated only after an accepted time step and are held fixed through all stages and rejected retries of the following time step.

All calculations use diagonal-norm, diagonal-\(\E\) SBP operators.  The
one-dimensional meshes use Legendre--Gauss--Lobatto operators, while the
Euler calculations use diagonal-\(\E\) operators constructed from the
degree-\(2p\) triangular and tetrahedral quadrature rules in
\cite{worku2024quadrature}.  Each coordinate direction is divided into
\(n_{\rm edge}\) segments; splitting each Cartesian cell into two triangles
in 2D or six tetrahedra in 3D gives \(n_{\rm edge}\),
\(2n_{\rm edge}^2\), and \(6n_{\rm edge}^3\) elements in 1D, 2D, and 3D,
respectively.  The vortex convergence calculations use standard fourth-order Runge-Kutta (RK4) method with a CFL number of \(0.1\), while the Burgers, Kelvin--Helmholtz, and Riemann
calculations use the adaptive third-order, four-stages strong-stability-preserving Runge-Kutta (SSPRK43) method \cite{fekete2022embedded,kraaijevanger1991contractivity,ranocha2022optimized}
from \texttt{OrdinaryDiffEq.jl} \cite{rackauckas2017differentialequations}
with absolute and relative tolerances \(10^{-7}\).

For an exact solution, the error is measured in the broken norm
\begin{equation}\label{eq:cdsbp_error_norm}
    \|\bm e\|_{\H}^{2}
    =
    \sum_{\Omega_k\in\mathcal T_h}
    \sum_{j=1}^{n_c}
    \bm e_{k,j}^{T}\H_k\bm e_{k,j}.
\end{equation}
At output times, we also monitor the conservation defect
\begin{equation}\label{eq:cdsbp_conservation_error}
    \mathcal E_{\rm con}(t)
    =
    \sum_{j=1}^{n_c}
    \left|
    \sum_{\Omega_k\in\mathcal T_h}
    \bm1_k^T\H_k\der[\bm u_{k,j}]{t}
    \right|
\end{equation}
and the normalized change in discrete mathematical entropy,
\begin{equation}\label{eq:cdsbp_entropy_change}
    \Delta S_h(t)
    =
    \frac{S_h(\bm u_b(t))-S_h(\bm u_b(0))}
    {\left|S_h(\bm u_b(0))\right|}.
\end{equation}
This normalization is used when
\(\left|S_h(\bm u_b(0))\right|\) is nonzero; otherwise, we report only the
numerator. In the numerical experiments, we use
\(\fn S(\fn U)=\fn U^2/2\) and \(\fn G(\fn U)=\fn U^3/3\)
for the Burgers equation. For all Euler experiments, we set
\(\gamma_{\rm gas}=1.4\) and use
\(\fn S=-\rho\log(p/\rho^{\gamma_{\rm gas}})/(\gamma_{\rm gas}-1)\)
and \(\fn G^{(i)}=v_i\fn S\), where \(v_i\) denotes the
\(i\)-th velocity component.
Since interface SATs are evaluated only on D-SBP facets, we
use the D-facet fraction
\(\phi_{\rm D}(t)=|\Gamma_{\rm D}^{I}(t)|/|\Gamma^{I}|\), with
\(0\le\phi_{\rm D}\le1\), as a proxy for the associated interface
computation. Smaller values indicate fewer SAT evaluations. When a single
value is reported for a complete run, \(\overline\phi_{\rm D}\) denotes the
average over all right-hand-side evaluations.

\subsection{Two- and three-dimensional isentropic vortex}
\label{sec:numerics_isentropic_vortex}
The isentropic-vortex problems
\cite{erlebacher1997interaction,williams2013nodal} are used to assess smooth-solution
accuracy.  In two dimensions, \(\Omega=[0,20]\times[-5,5]\), and the exact
solution is
\begin{equation}\label{eq:euler2d_vortex}
\begin{aligned}
    \rho
    &=\left(1-
    \frac{\alpha^2(\gamma_{\rm gas}-1)}{16\gamma_{\rm gas}\pi^2}
    \exp\!\left(2(1-r^2)\right)\right)^{1/(\gamma_{\rm gas}-1)},
    \qquad 
    p=\rho^{\gamma_{\rm gas}},\\
    u&=1-\frac{\alpha}{2\pi}(y-y_c)\exp(1-r^2),
    \qquad
    v=\frac{\alpha}{2\pi}(x-x_c-t)\exp(1-r^2),
\end{aligned}
\end{equation}
where
\(r^2=(x-x_c-t)^2+(y-y_c)^2\), \(\alpha=3\), and
\((x_c,y_c)=(5,0)\).  In three dimensions,
\(\Omega=[-10,10]^3\), and
\begin{equation}\label{eq:euler3d_vortex}
\begin{aligned}
    \rho
    &=\left(1-\frac{2}{25}(\gamma_{\rm gas}-1)
    \exp\!\left(1-(y-t)^2-x^2\right)\right)^{1/(\gamma_{\rm gas}-1)},\\
    E
    &=\frac{\rho^{\gamma_{\rm gas}}}{\gamma_{\rm gas}(\gamma_{\rm gas}-1)}
    +\frac{\rho}{2}(u^2+v^2+w^2),\\
    u&=-\frac{2}{5}(y-t)
    \exp\!\left(\frac{1-(y-t)^2-x^2}{2}\right),\\
    v&=1+\frac{2}{5}x
    \exp\!\left(\frac{1-(y-t)^2-x^2}{2}\right),
    \qquad 
    w=0.
\end{aligned}
\end{equation}
Periodic boundary conditions are imposed in every coordinate direction.

Let \(\bm x_k^c\) and \(r_k\) be the centroid and bounding radius of
\(\Omega_k\).  In 2D, an element is labeled D when
\begin{equation}\label{eq:vortex_2d_d_region}
    \operatorname{dist}_{\rm per}
    \left(\bm x_k^c,(x_c+t,y_c)\right)
    \le R_{\rm D}+r_k,
    \qquad R_{\rm D}=2.
\end{equation}
Thus, the D region is a circle slightly larger than the vortex
core.  The 3D solution in \cref{eq:euler3d_vortex} is independent of \(z\)
and is therefore a vortex tube.  Its D region is the corresponding moving
cylinder,
\begin{equation}\label{eq:vortex_3d_d_region}
    \operatorname{dist}^{xy}_{\rm per}
    \left((x_k^c,y_k^c),(0,t)\right)
    \le R_{\rm D}+r_k,
    \qquad R_{\rm D}=2.
\end{equation}

The convergence study tests whether the moving C/D partition and associated
state transfers retain smooth-solution accuracy. The long-time study examines
the effects of repeated repartitioning on conservation, entropy, and solution
error. No volume artificial dissipation is added. The convergence runs use the
entropy-dissipative correction mode, \(\sigma_k=\sigma_k^{\rm ed}\), whereas the
long-time runs use the entropy-conservative mode,
\(\sigma_k=\sigma_k^{\rm ec}\). The long-time runs use relaxation RK4
\cite{ketcheson2019relaxation,ranocha2020relaxation} to reduce temporal
contamination of the entropy histories.

\begin{table}[!t]
\centering
\scriptsize
\setlength{\tabcolsep}{2.2pt}
\renewcommand{\arraystretch}{1.14}
\caption{Isentropic-vortex convergence in 2D at \(t_f=1\).}
\label{tab:isentropic_vortex_cdsbp_convergence_2d}
\begin{tabular}{c r cc@{\hspace{8pt}}cc@{\hspace{8pt}}cc@{\hspace{8pt}}cc@{\hspace{8pt}}cc}
\toprule
& & \multicolumn{2}{c}{C-2pt} & \multicolumn{2}{c}{C-RCM} & \multicolumn{2}{c}{D-2pt} & \multicolumn{2}{c}{D-RCM} & \multicolumn{2}{c}{CD-RCM} \\
\cmidrule(lr){3-4}\cmidrule(lr){5-6}\cmidrule(lr){7-8}\cmidrule(lr){9-10}\cmidrule(lr){11-12}
\(p\) & \(n_{\rm edge}\) & error & rate & error & rate & error & rate & error & rate & error & rate \\
\midrule
1 & 32 & 3.553e-01 & -- & 3.291e-01 & -- & 2.118e-01 & -- & 2.177e-01 & -- & 2.183e-01 & -- \\
 & 64 & 7.981e-02 & 2.15 & 7.906e-02 & 2.06 & 5.876e-02 & 1.85 & 6.109e-02 & 1.83 & 6.132e-02 & 1.83 \\
 & 128 & 1.465e-02 & 2.45 & 1.630e-02 & 2.28 & 1.568e-02 & 1.91 & 1.653e-02 & 1.89 & 1.660e-02 & 1.88 \\
 & 256 & 3.351e-03 & 2.13 & 3.707e-03 & 2.14 & 4.133e-03 & 1.92 & 4.423e-03 & 1.90 & 4.435e-03 & 1.90 \\
\addlinespace[0.25em]
2 & 32 & 8.497e-02 & -- & 9.070e-02 & -- & 3.371e-02 & -- & 3.813e-02 & -- & 3.835e-02 & -- \\
 & 64 & 1.273e-02 & 2.74 & 1.562e-02 & 2.54 & 5.116e-03 & 2.72 & 6.161e-03 & 2.63 & 6.243e-03 & 2.62 \\
 & 128 & 2.475e-03 & 2.36 & 3.073e-03 & 2.35 & 7.303e-04 & 2.81 & 9.017e-04 & 2.77 & 9.304e-04 & 2.75 \\
 & 256 & 6.547e-04 & 1.92 & 7.145e-04 & 2.10 & 9.590e-05 & 2.93 & 1.218e-04 & 2.89 & 1.329e-04 & 2.81 \\
\addlinespace[0.25em]
3 & 32 & 1.226e-02 & -- & 1.758e-02 & -- & 5.436e-03 & -- & 7.793e-03 & -- & 7.807e-03 & -- \\
 & 64 & 1.059e-03 & 3.53 & 1.542e-03 & 3.51 & 4.419e-04 & 3.62 & 6.229e-04 & 3.65 & 6.254e-04 & 3.64 \\
 & 128 & 1.060e-04 & 3.32 & 1.456e-04 & 3.40 & 3.502e-05 & 3.66 & 4.938e-05 & 3.66 & 4.990e-05 & 3.65 \\
 & 256 & 8.228e-06 & 3.69 & 1.272e-05 & 3.52 & 2.589e-06 & 3.76 & 3.658e-06 & 3.75 & 3.737e-06 & 3.74 \\
\addlinespace[0.25em]
4 & 32 & 1.790e-03 & -- & 2.936e-03 & -- & 7.904e-04 & -- & 1.336e-03 & -- & 1.337e-03 & -- \\
 & 64 & 9.617e-05 & 4.22 & 1.429e-04 & 4.36 & 3.601e-05 & 4.46 & 5.553e-05 & 4.59 & 5.573e-05 & 4.58 \\
 & 128 & 5.387e-06 & 4.16 & 8.064e-06 & 4.15 & 1.547e-06 & 4.54 & 2.337e-06 & 4.57 & 2.366e-06 & 4.56 \\
 & 256 & 2.442e-07 & 4.46 & 3.629e-07 & 4.47 & 6.033e-08 & 4.68 & 9.053e-08 & 4.69 & 9.277e-08 & 4.67 \\
\bottomrule
\end{tabular}
\end{table}

\begin{table}[!t]
\centering
\scriptsize
\setlength{\tabcolsep}{2.2pt}
\renewcommand{\arraystretch}{1.14}
\caption{Isentropic-vortex convergence in 3D at \(t_f=1\).}
\label{tab:isentropic_vortex_cdsbp_convergence_3d}
\begin{tabular}{c r cc@{\hspace{8pt}}cc@{\hspace{8pt}}cc@{\hspace{8pt}}cc@{\hspace{8pt}}cc}
\toprule
& & \multicolumn{2}{c}{C-2pt} & \multicolumn{2}{c}{C-RCM} & \multicolumn{2}{c}{D-2pt} & \multicolumn{2}{c}{D-RCM} & \multicolumn{2}{c}{CD-RCM} \\
\cmidrule(lr){3-4}\cmidrule(lr){5-6}\cmidrule(lr){7-8}\cmidrule(lr){9-10}\cmidrule(lr){11-12}
\(p\) & \(n_{\rm edge}\) & error & rate & error & rate & error & rate & error & rate & error & rate \\
\midrule
1 & 10 & 3.613e+00 & -- & 3.572e+00 & -- & 2.394e+00 & -- & 2.429e+00 & -- & 2.481e+00 & -- \\
 & 15 & 1.875e+00 & 1.62 & 1.861e+00 & 1.61 & 1.286e+00 & 1.53 & 1.352e+00 & 1.45 & 1.384e+00 & 1.44 \\
 & 20 & 1.081e+00 & 1.91 & 1.046e+00 & 2.00 & 6.709e-01 & 2.26 & 7.051e-01 & 2.26 & 7.263e-01 & 2.24 \\
 & 25 & 6.896e-01 & 2.01 & 6.921e-01 & 1.85 & 4.235e-01 & 2.06 & 4.598e-01 & 1.92 & 4.739e-01 & 1.91 \\
\addlinespace[0.25em]
2 & 10 & 2.311e+00 & -- & 2.295e+00 & -- & 1.259e+00 & -- & 1.282e+00 & -- & 1.289e+00 & -- \\
 & 15 & 1.114e+00 & 1.80 & 1.100e+00 & 1.81 & 5.611e-01 & 1.99 & 5.968e-01 & 1.89 & 6.038e-01 & 1.87 \\
 & 20 & 6.132e-01 & 2.07 & 6.224e-01 & 1.98 & 2.863e-01 & 2.34 & 3.033e-01 & 2.35 & 3.114e-01 & 2.30 \\
 & 25 & 3.933e-01 & 1.99 & 4.036e-01 & 1.94 & 1.720e-01 & 2.28 & 1.817e-01 & 2.30 & 1.892e-01 & 2.23 \\
\addlinespace[0.25em]
3 & 10 & 7.382e-01 & -- & 8.124e-01 & -- & 4.674e-01 & -- & 5.041e-01 & -- & 5.051e-01 & -- \\
 & 15 & 2.752e-01 & 2.43 & 3.129e-01 & 2.35 & 1.368e-01 & 3.03 & 1.521e-01 & 2.96 & 1.534e-01 & 2.94 \\
 & 20 & 1.238e-01 & 2.78 & 1.481e-01 & 2.60 & 5.683e-02 & 3.05 & 6.846e-02 & 2.77 & 6.920e-02 & 2.77 \\
 & 25 & 6.110e-02 & 3.16 & 7.484e-02 & 3.06 & 2.631e-02 & 3.45 & 3.245e-02 & 3.35 & 3.300e-02 & 3.32 \\
\addlinespace[0.25em]
4 & 10 & 3.011e-01 & -- & 3.778e-01 & -- & 1.662e-01 & -- & 2.181e-01 & -- & 2.183e-01 & -- \\
 & 15 & 7.186e-02 & 3.53 & 1.024e-01 & 3.22 & 4.036e-02 & 3.49 & 5.808e-02 & 3.26 & 5.819e-02 & 3.26 \\
 & 20 & 2.156e-02 & 4.19 & 3.133e-02 & 4.12 & 1.192e-02 & 4.24 & 1.708e-02 & 4.26 & 1.716e-02 & 4.25 \\
 & 25 & 8.944e-03 & 3.94 & 1.285e-02 & 3.99 & 4.690e-03 & 4.18 & 6.666e-03 & 4.22 & 6.719e-03 & 4.20 \\
\bottomrule
\end{tabular}
\end{table}

\Cref{tab:isentropic_vortex_cdsbp_convergence_2d,tab:isentropic_vortex_cdsbp_convergence_3d}
show decreasing errors under mesh refinement for all five methods. On the
finest meshes, the observed rates are approximately in the \(p\)- to
\((p+1)\)-order range. CD-RCM and D-RCM have closely matching errors and rates at every degree
and resolution, despite the moving C/D partition. Thus, the hybrid
coupling and state transfer retain the smooth-solution accuracy of D-RCM
in these tests. C-RCM also achieves errors and rates comparable to C-2pt,
without using two-point volume fluxes.

\begin{figure}[!t]
\centering
\begin{subfigure}[t]{0.485\textwidth}
\centering
\includegraphics[width=0.99\linewidth]{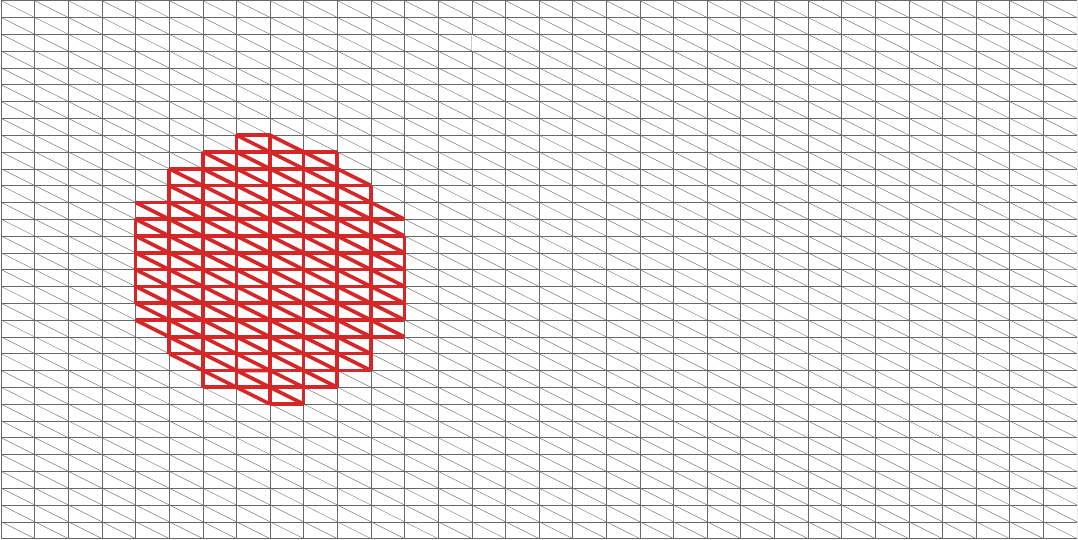}
\caption{Facet partition, \(t=0\)}
\label{fig:vortex_2d_mesh_t0}
\end{subfigure}
\hfill
\begin{subfigure}[t]{0.485\textwidth}
\centering
\includegraphics[width=0.99\linewidth]{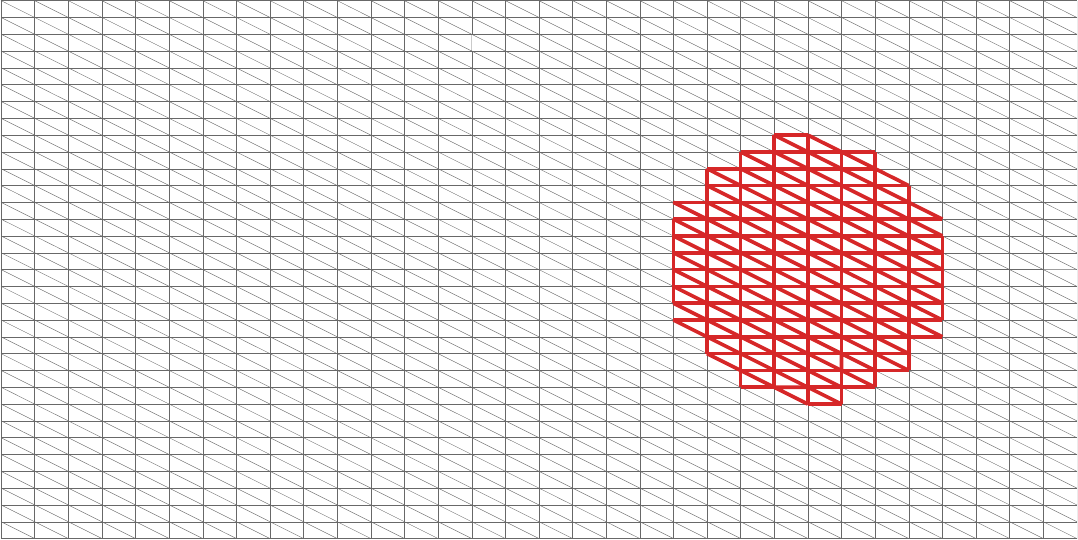}
\caption{Facet partition, \(t=10\)}
\label{fig:vortex_2d_mesh_t10}
\end{subfigure}

\begin{subfigure}[t]{0.485\textwidth}
\centering
\includegraphics[width=\linewidth]{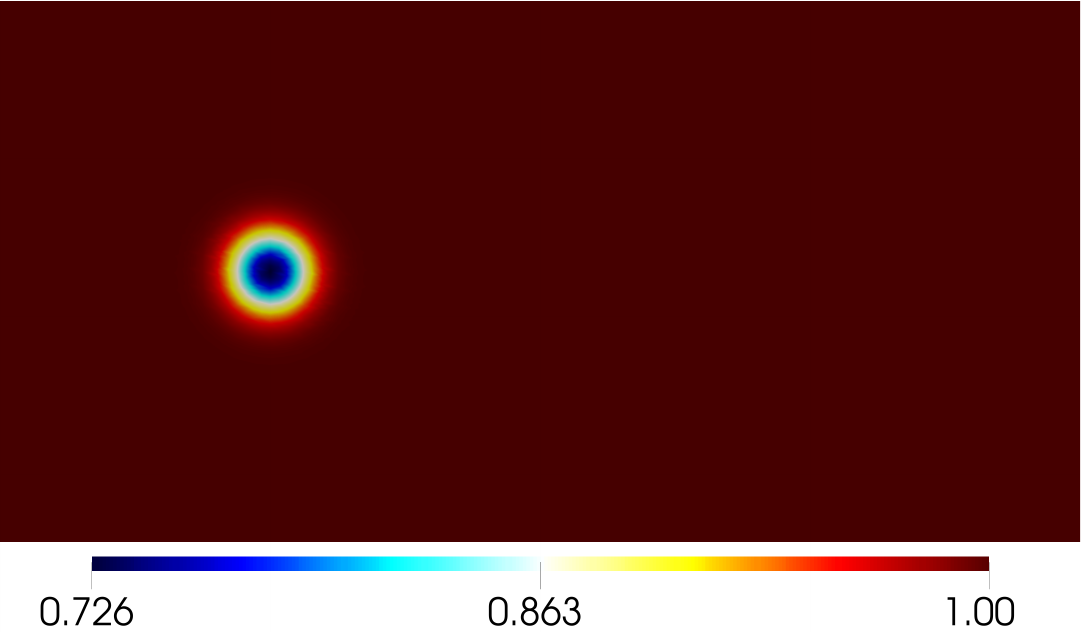}
\caption{Density, \(t=0\)}
\label{fig:vortex_2d_density_t0}
\end{subfigure}
\hfill
\begin{subfigure}[t]{0.485\textwidth}
\centering
\includegraphics[width=\linewidth]{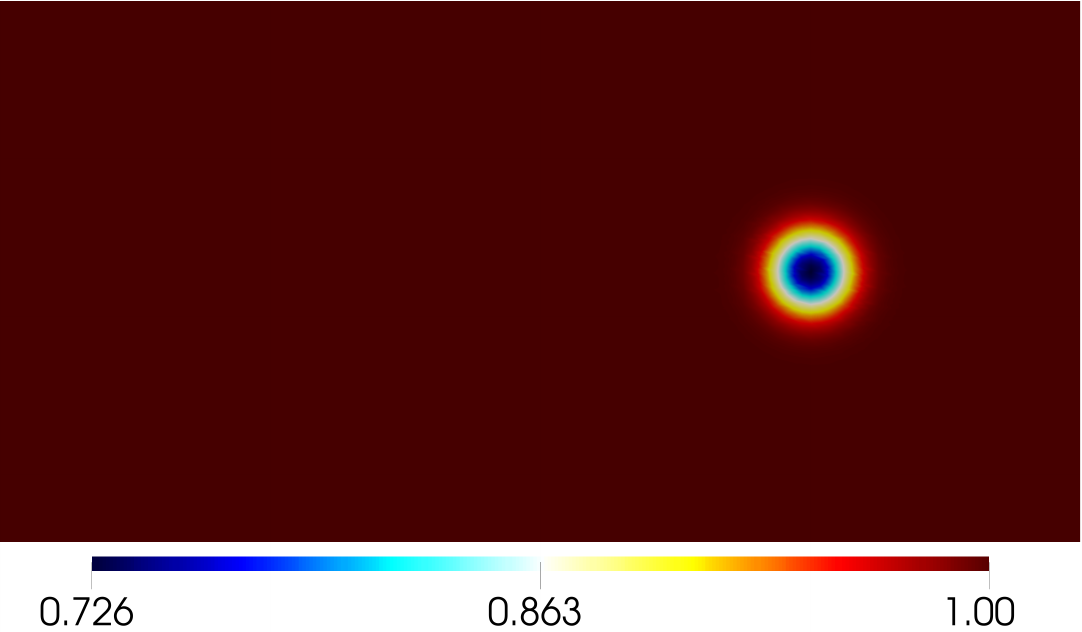}
\caption{Density, \(t=10\)}
\label{fig:vortex_2d_density_t10}
\end{subfigure}
\caption{Hybrid facet partition and density for the two-dimensional
isentropic vortex with \(p=4\) and \(n_{\rm edge}=32\). D-SBP facets are red.}
\label{fig:isentropic_vortex_mesh_snapshots}
\end{figure}

\begin{figure}[!t]
\centering
\begin{subfigure}[t]{0.48\textwidth}
\centering
\includegraphics[width=\linewidth]{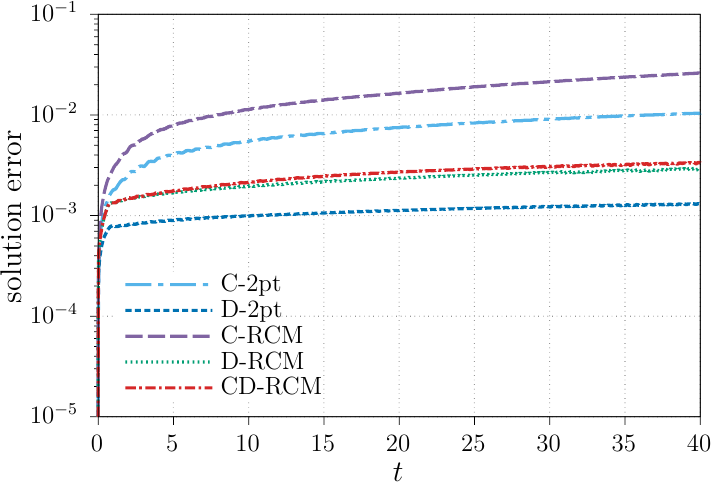}
\caption{2D, \(n_{\rm edge}=32\)}
\label{fig:vortex_2d_long_time_error}
\end{subfigure}
\hfill
\begin{subfigure}[t]{0.48\textwidth}
\centering
\includegraphics[width=\linewidth]{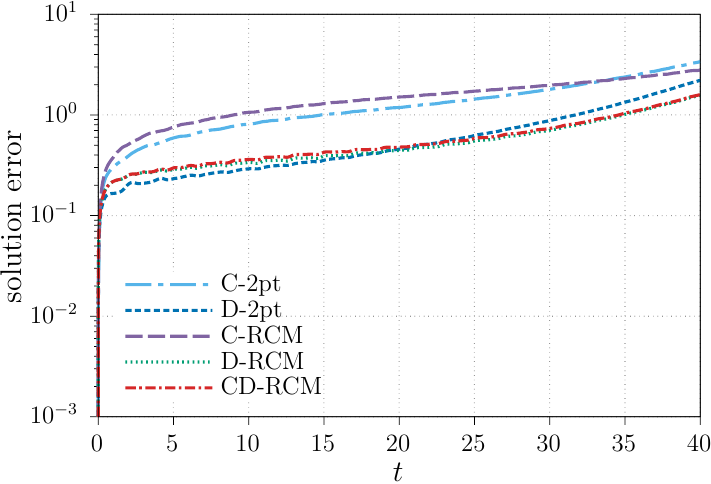}
\caption{3D, \(n_{\rm edge}=10\)}
\label{fig:vortex_3d_long_time_error}
\end{subfigure}
\caption{Long-time isentropic-vortex solution-error histories for \(p=4\).}
\label{fig:isentropic_vortex_long_time_error}
\end{figure}

\begin{figure}[!t]
\centering
\begin{subfigure}[t]{0.48\textwidth}
\centering
\includegraphics[width=\linewidth]{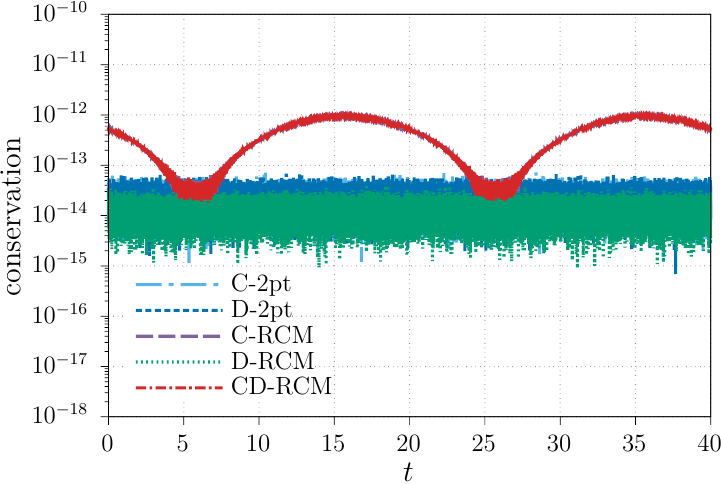}
\caption{2D, \(n_{\rm edge}=32\)}
\label{fig:vortex_2d_long_time_conservation}
\end{subfigure}
\hfill
\begin{subfigure}[t]{0.48\textwidth}
\centering
\includegraphics[width=\linewidth]{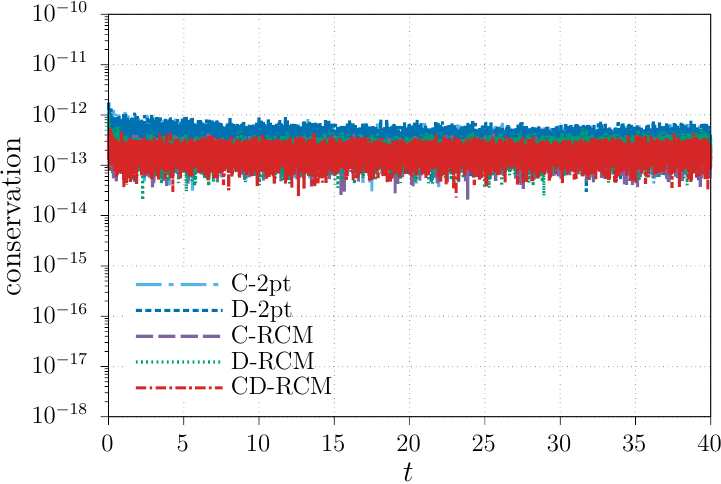}
\caption{3D, \(n_{\rm edge}=10\)}
\label{fig:vortex_3d_long_time_conservation}
\end{subfigure}
\caption{Long-time isentropic-vortex conservation defects for \(p=4\).}
\label{fig:isentropic_vortex_long_time_conservation}
\end{figure}

\begin{figure}[!t]
\centering
\begin{subfigure}[t]{0.48\textwidth}
\centering
\includegraphics[width=\linewidth]{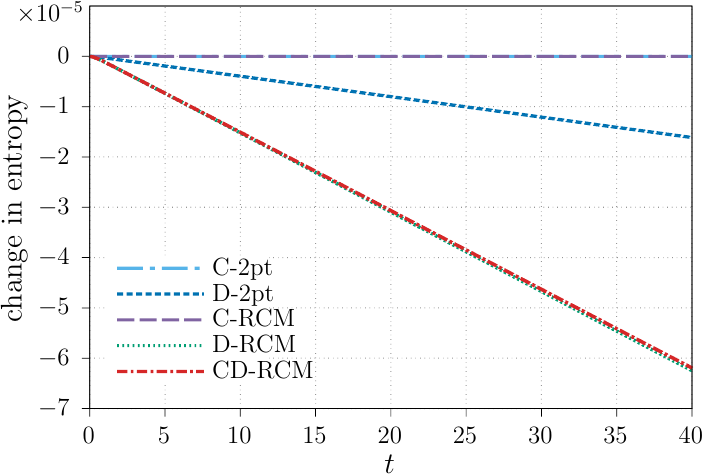}
\caption{2D, \(n_{\rm edge}=32\)}
\label{fig:vortex_2d_long_time_entropy}
\end{subfigure}
\hfill
\begin{subfigure}[t]{0.48\textwidth}
\centering
\includegraphics[width=\linewidth]{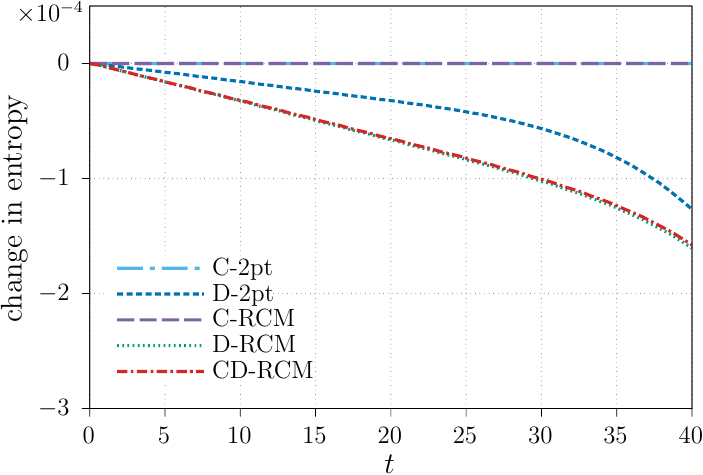}
\caption{3D, \(n_{\rm edge}=10\)}
\label{fig:vortex_3d_long_time_entropy}
\end{subfigure}
\caption{Long-time entropy changes for the isentropic-vortex problems with \(p=4\).}
\label{fig:isentropic_vortex_long_time_entropy}
\end{figure}

\Cref{fig:isentropic_vortex_mesh_snapshots} shows that the D-SBP region
follows the vortex core while the rest of the mesh retains C-SBP coupling,
without visible switching artifacts. The smooth histories in
\cref{fig:isentropic_vortex_long_time_error} show no abrupt error growth
associated with the repeated partition changes. The CD-RCM error histories closely follow those of D-RCM. Both RCM schemes with D-SBP facets have smaller long-time errors than C-RCM in
these tests. Over \(0\le t\le40\), the conservation
defects in \cref{fig:isentropic_vortex_long_time_conservation} remain below
approximately \(10^{-12}\) in both 2D and 3D. The entropy histories in
\cref{fig:isentropic_vortex_long_time_entropy} remain essentially unchanged
for C-2pt and C-RCM. Methods with D-SBP facets have a nonpositive entropy
change, consistent with LLF dissipation and, for CD-RCM, entropy-nonincreasing
D-to-C transfers. Finally, similar to the error profiles, the entropy histories of the CD-RCM method closely follow those of the D-RCM.

\subsection{One-dimensional inviscid Burgers equation}
\label{sec:numerics_burgers}
We solve
\begin{equation}\label{eq:burgers_numerical_problem}
    \pder[\fn{U}]{t}+\frac{1}{2}\pder[\fn{U}^2]{x}=0,
    \qquad \Omega=[0,1],
\end{equation}
with periodic boundary conditions and the initial condition
\begin{equation}\label{eq:burgers_numerical_initial_condition}
    \fn{U}(x,0)=2+\sin(2\pi x).
\end{equation}
The shock forms at \(t_s=1/(2\pi)\). For CD-RCM, element \(\Omega_k\) is
labeled D after an accepted step when the solution smoothness sensor, \(\theta_k\) (defined in \cite{worku2026targeted}), satisfies
\begin{equation}\label{eq:burgers_hybrid_sensor}
    \theta_k\ge 10^{-2},
\end{equation}
and relabeled C when \(\theta_k\le 10^{-3}\). We compare solutions of the C-split-AD, C-RCM-AD, D-split, D-RCM, and CD-RCM methods.
Volume AD of \cite{worku2026targeted} is used only with the C-split and C-RCM schemes, as they produce highly oscillatory solutions without it.

\begin{table}[!t]
\centering
\scriptsize
\setlength{\tabcolsep}{3.8pt}
\renewcommand{\arraystretch}{1.15}
\caption{Accepted SSPRK43 step counts for the Burgers problem at
\(t_f=1.0\). The final column is the mean D-facet fraction for CD-RCM.}
\label{tab:burgers_hybrid_timesteps}
\begin{tabular}{l l l l l l@{\hspace{12pt}}l l}
\toprule
\(p\) & \(n_{\rm edge}\) & C-split-AD & C-RCM-AD & D-split & D-RCM & CD-RCM & \(\overline{\phi}_{\rm D}\) \\
\midrule
2 & 25 & 5297 & 5193 & 5978 & 5841 & 6206 & 0.306 \\
 & 50 & 9282 & 9098 & 10449 & 10204 & 10841 & 0.166 \\
 & 100 & 16375 & 16087 & 18377 & 17942 & 19088 & 0.084 \\
\addlinespace[0.25em]
3 & 25 & 8263 & 8200 & 9192 & 8995 & 9223 & 0.215 \\
 & 50 & 14517 & 14387 & 16139 & 15804 & 16141 & 0.123 \\
 & 100 & 25741 & 25507 & 28475 & 27874 & 28479 & 0.068 \\
\addlinespace[0.25em]
4 & 25 & 11340 & 11231 & 12470 & 12247 & 12315 & 0.186 \\
 & 50 & 20059 & 19864 & 21951 & 21529 & 21604 & 0.098 \\
 & 100 & 35548 & 35168 & 38806 & 38071 & 38162 & 0.054 \\
\addlinespace[0.25em]
5 & 25 & 13854 & 13870 & 15752 & 15521 & 15591 & 0.165 \\
 & 50 & 24311 & 24356 & 27778 & 27353 & 27432 & 0.091 \\
 & 100 & 42974 & 43069 & 49185 & 48378 & 48491 & 0.049 \\
\bottomrule
\end{tabular}
\end{table}

\begin{figure}[!t]
\centering
\begin{subfigure}[t]{0.485\textwidth}
\centering
\includegraphics[width=\linewidth]{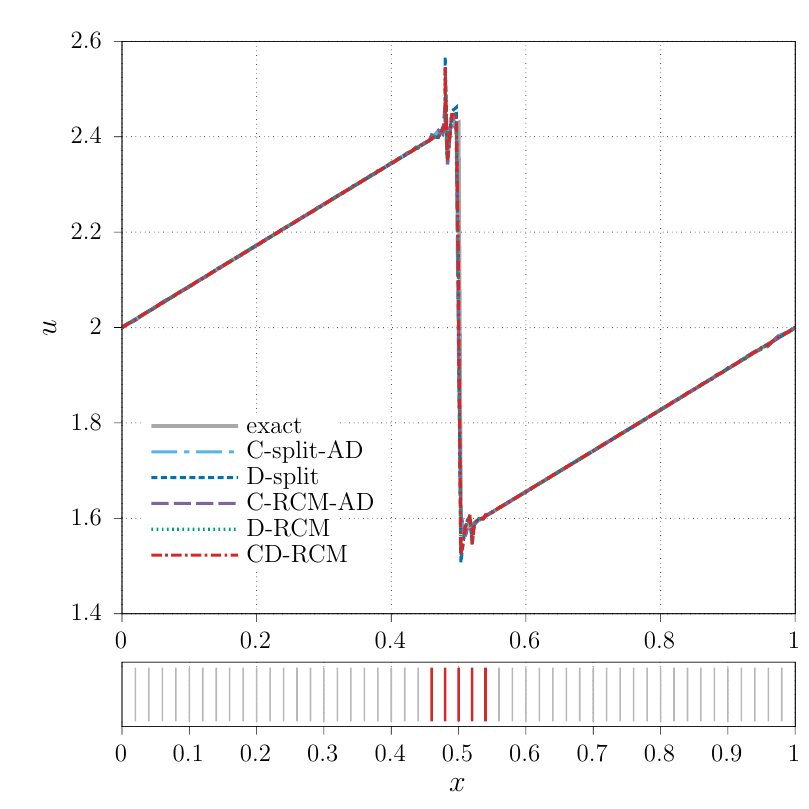}
\caption{\(t=1.0\)}
\label{fig:burgers_solution_t1p0}
\end{subfigure}
\hfill
\begin{subfigure}[t]{0.485\textwidth}
\centering
\includegraphics[width=\linewidth]{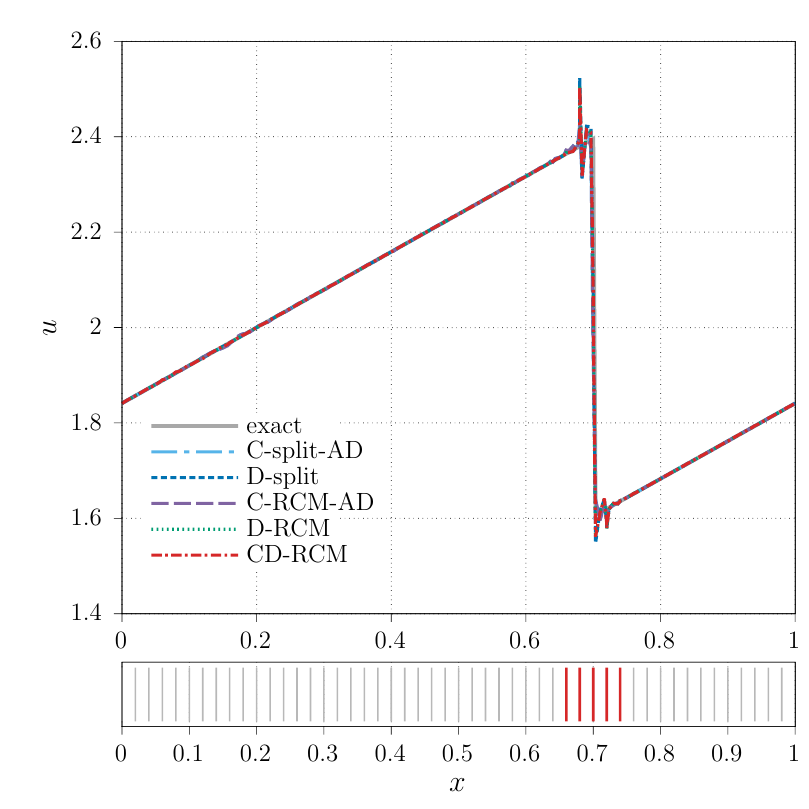}
\caption{\(t=1.1\)}
\label{fig:burgers_solution_t1p1}
\end{subfigure}
\caption{Post-shock Burgers solutions and CD-RCM facet classifications
for \(p=4\) and \(n_{\rm edge}=50\). D-SBP facets are shown in red.}
\label{fig:burgers_hybrid_solution}
\end{figure}

In \cref{fig:burgers_hybrid_solution}, all methods reproduce the
moving shock location and agree closely with the exact entropy solution
away from the front. Local overshoots and undershoots remain near the
shock. The CD-RCM profile closely follows D-RCM, while its D-SBP
facets remain confined to a narrow band that moves with the shock. Compared to the C-split and C-RCM schemes without AD (not shown), CD-RCM exhibits smaller oscillations near the shock. 

In \cref{tab:burgers_hybrid_timesteps}, CD-RCM uses accepted-step counts
within about \(7\%\) of D-RCM across the reported degrees and meshes.
Its mean D-facet fraction ranges from \(0.049\) to \(0.306\) and decreases
under mesh refinement at every degree. Thus, the hybrid scheme retains
similar shock resolution and adaptive stepping to D-RCM while restricting
SAT evaluations to a small fraction of the interfaces.

\subsection{Kelvin--Helmholtz instability}
\label{sec:numerics_khi}
The Kelvin--Helmholtz instability is a demanding robustness test for
high-order methods as vortex roll-up generates increasingly fine,
under-resolved flow structures. We solve the two-dimensional Euler equations on
\(\Omega=[-1,1]^2\) with periodic boundary conditions.  Let
\[
    B(y)=\tanh(15y+7.5)-\tanh(15y-7.5).
\]
Following \cite{rueda2022subcell}, the primitive initial condition is
\begin{equation}\label{eq:khi_initial_condition}
    \rho=\frac12+\frac34 B(y),
    \quad
    u=\frac12\left(B(y)-1\right),
    \quad
    v=\frac1{10}\sin(2\pi x),
    \quad p=1.
\end{equation}

For CD-RCM, we use the same element classification as in the Burgers
problem; \ie, element \(\Omega_k\) is labeled D when \(\theta_k\ge10^{-2}\),
relabeled C when \(\theta_k\le10^{-3}\), and left unchanged between the
thresholds. We classify a run as failed if any nodal density or pressure
becomes nonpositive or nonfinite, or if the adaptive time step satisfies
\(\Delta t\le10^{-8}\) for ten consecutive steps.
We compare C-2pt, D-2pt, C-RCM, D-RCM, and CD-RCM in two sets of
runs, without and with volume AD, using \(p=2,\ldots,8\),
\(n_{\rm edge}=32\), and \(t_f=15\). The correction mode
\(\sigma_k=\sigma_k^{\rm ed}\), LLF coupling, sensor thresholds, mesh,
time integrator, tolerances, and failure criteria are identical in the two
sets. The no-AD table reports the last accepted time \(t_{\rm last}\), with
\(t_{\rm last}=15\) for a completed run. The AD table additionally reports
the accepted-step count \(N_t\) for completed runs and the mean D-facet
fraction \(\overline{\phi}_{\rm D}\).

\begin{table}[!t]
\centering
\scriptsize
\setlength{\tabcolsep}{3.0pt}
\renewcommand{\arraystretch}{1.15}
\caption{Kelvin--Helmholtz robustness with volume AD of
\cite{worku2026targeted} on \(n_{\rm edge}=32\) mesh. Each entry reports
\((t_{\rm last}\,|\,N_t\,|\,\overline{\phi}_{\rm D})\), where
\(\overline{\phi}_{\rm D}\) is the mean D-facet fraction. Accepted-step
counts, \(N_t\), are reported for completed runs. Bold entries did not reach
\(t_f=15\).}
\label{tab:khi_hybrid_robustness}
\begin{tabular}{l l l l l l}
\toprule
\(p\) & C-2pt-AD & C-RCM-AD & D-2pt-AD & D-RCM-AD & CD-RCM-AD \\
\midrule
2 & (15 | 8186 | 0) & (15 | 8115 | 0) & (15 | 8433 | 1) & (15 | 8361 | 1) & (15 | 14791 | 0.69) \\
3 & (15 | 9808 | 0) & (15 | 10133 | 0) & (15 | 10405 | 1) & (15 | 10187 | 1) & (15 | 19643 | 0.08) \\
4 & (15 | 17899 | 0) & (15 | 18014 | 0) & (15 | 18668 | 1) & (15 | 19237 | 1) & (15 | 42147 | 0.34) \\
5 & (15 | 23070 | 0) & (15 | 25072 | 0) & (15 | 27268 | 1) & (15 | 26542 | 1) & (15 | 36808 | 0.03) \\
6 & (15 | 35878 | 0) & (15 | 38328 | 0) & (15 | 34170 | 1) & (15 | 36237 | 1) & (15 | 58376 | 0.11) \\
7 & (15 | 39309 | 0) & (15 | 41255 | 0) & (15 | 39085 | 1) & (15 | 39206 | 1) & (15 | 52209 | 0.02) \\
8 & (15 | 54985 | 0) & \textbf{(4.21 | -- | 0)} & \textbf{(10.8 | -- | 1)} & \textbf{(4.69 | -- | 1)} & \textbf{(4.19 | -- | --)} \\
\bottomrule
\end{tabular}
\end{table}  

\begin{table}[!t]
\centering
\scriptsize
\setlength{\tabcolsep}{3.0pt}
\renewcommand{\arraystretch}{1.15}
\caption{Kelvin--Helmholtz robustness without volume AD on \(n_{\rm edge}=32\) mesh. Entries report the last accepted time before failure. D-SBP facets retain LLF dissipation.}
\label{tab:khi_hybrid_robustness_noad}
\begin{tabular}{l l l l l l}
\toprule
\(p\) & C-2pt & C-RCM & D-2pt & D-RCM & CD-RCM \\
\midrule
2 & 2.89 & 2.25 & 3.31 & 2.53 & 3.29 \\
3 & 2.9 & 1.55 & 3.74 & 1.77 & 1.76 \\
4 & 3.06 & 1.65 & 3.69 & 2.04 & 1.77 \\
5 & 3.03 & 1.42 & 3.04 & 1.48 & 1.71 \\
6 & 3.14 & 1.8 & 3.61 & 1.84 & 1.59 \\
7 & 3.05 & 1.64 & 3.11 & 1.36 & 1.65 \\
8 & 2.85 & 1.43 & 3.13 & 1.76 & 1.42 \\
\bottomrule
\end{tabular}
\end{table}

The KHI results indicate that the RCM schemes can achieve robustness
comparable to the Hadamard-form two-point-flux discretizations, particularly
when coupled with AD of \cite{worku2026targeted}. In
\cref{tab:khi_hybrid_robustness}, all five schemes reach \(t_f=15\) for
\(p=2,\ldots,7\). At \(p=8\), only C-2pt-AD completes; CD-RCM-AD fails
at \(t\approx4.19\), close to C-RCM-AD and earlier than the two
discontinuous endpoints. Without volume AD, every calculation in
\cref{tab:khi_hybrid_robustness_noad} fails before \(t=4\), with D-2pt
reaching the largest \(t_{\rm last}\) at every degree.

For the completed hybrid runs,
\(\overline\phi_{\rm D}\in[0.02,0.69]\), so many interfaces avoid SAT
evaluations. However, CD-RCM-AD requires more accepted steps than D-RCM-AD
at every reported degree with a completed run. The smaller D-facet fraction therefore does not by itself establish a reduction in total computational cost. Less frequent C/D partition updates may reduce transfer overhead, but their effect on accuracy, robustness, and overall cost requires further study and is left for future work.

\Cref{fig:khi_hybrid_solution} shows CD-RCM-AD with \(p=4\) and
\(n_{\rm edge}=64\). At \(t=3.7\), D-SBP facets follow the rolled-up
shear layers while smoother regions retain C-SBP coupling. By
\(t=6.7\), finer flow structures and the D-SBP regions are more widespread,
but substantial parts of the mesh still use C-SBP. The partition thus
responds to the evolving flow without changing the underlying mesh.

\begin{figure}[!t]
\centering
\begin{subfigure}[t]{0.485\textwidth}
\centering
\includegraphics[width=\linewidth]{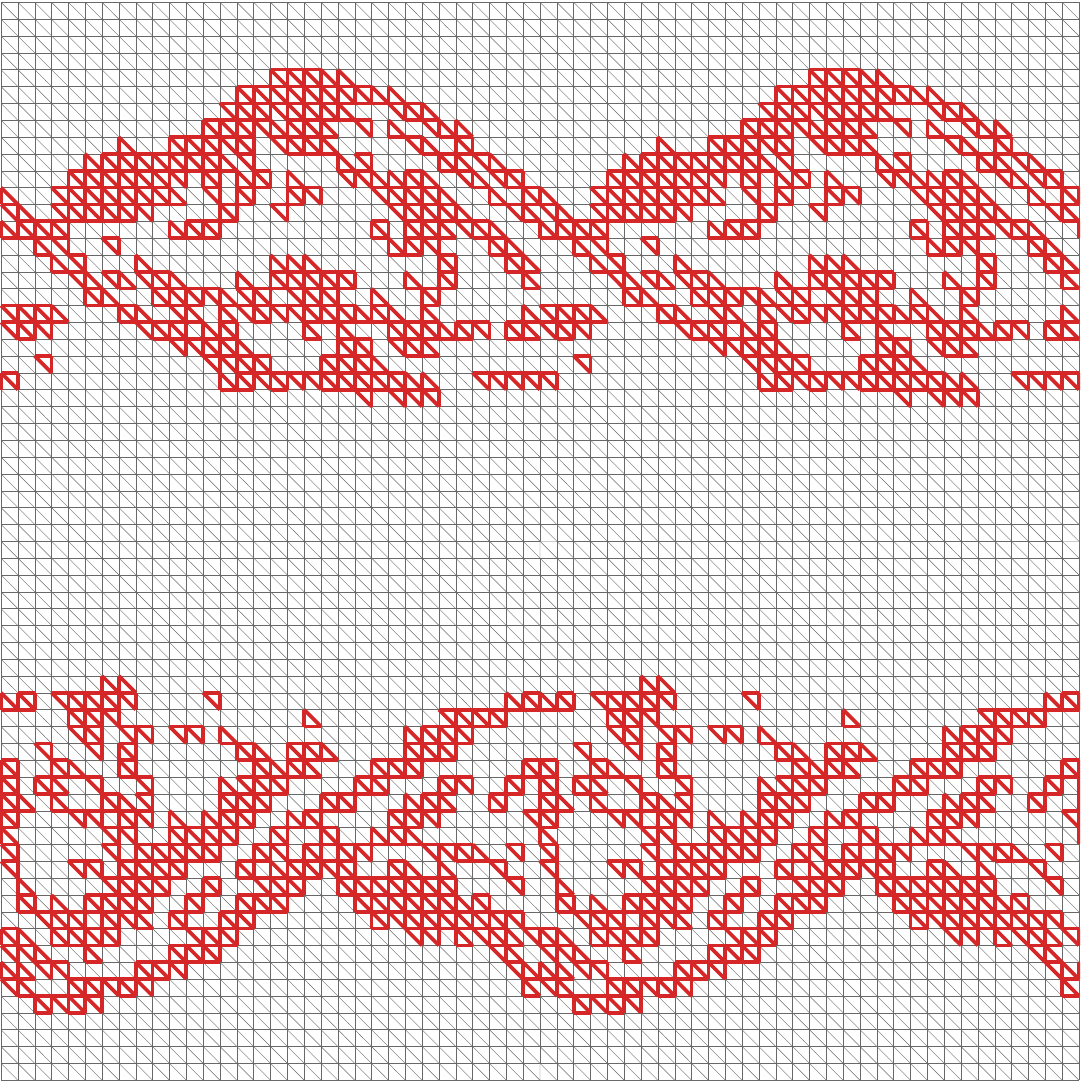}
\caption{Facet partition, \(t=3.7\)}
\label{fig:khi_mesh_t3p7}
\end{subfigure}
\hfill
\begin{subfigure}[t]{0.485\textwidth}
\centering
\includegraphics[width=\linewidth]{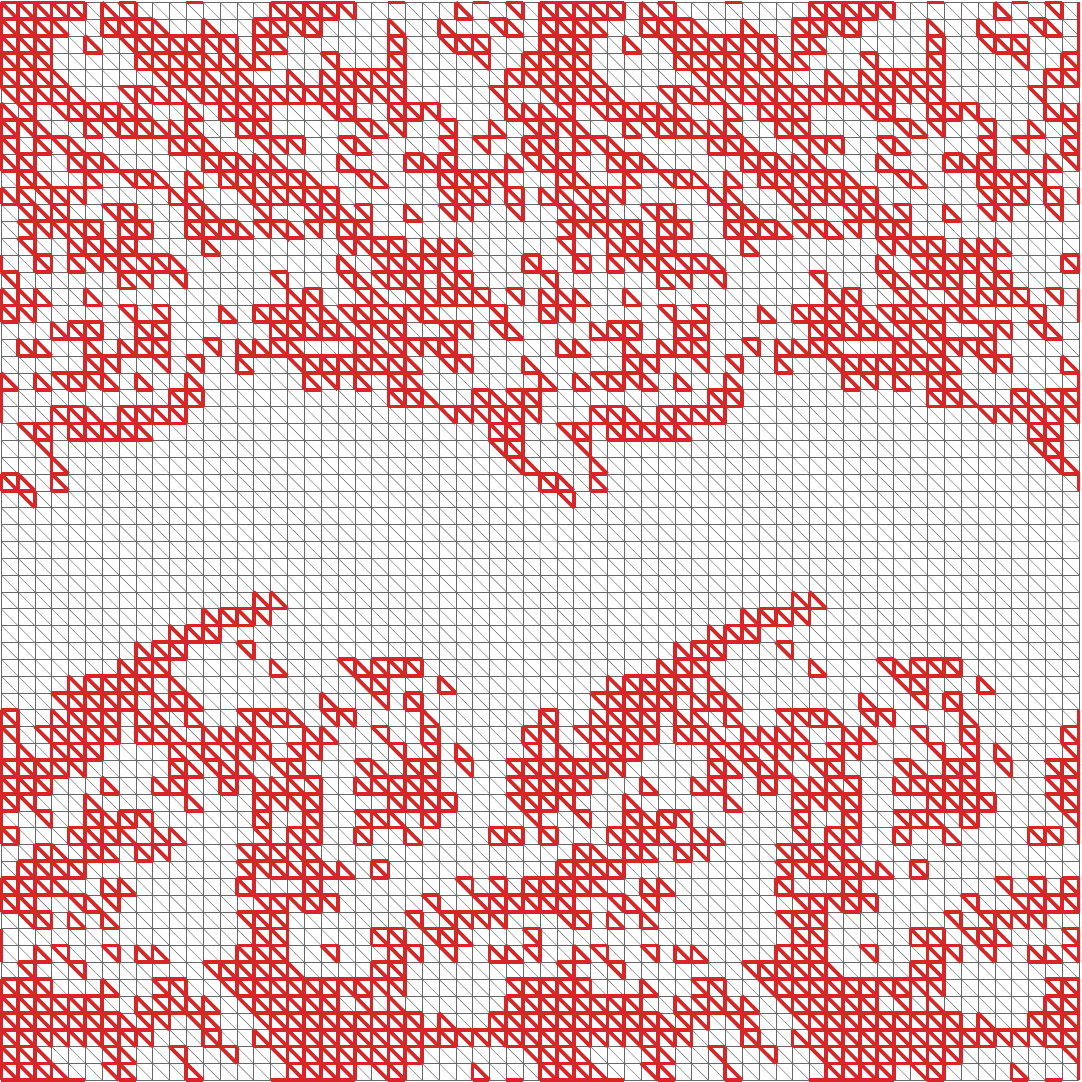}
\caption{Facet partition, \(t=6.7\)}
\label{fig:khi_mesh_t6p7}
\end{subfigure}

\begin{subfigure}[t]{0.485\textwidth}
\centering
\includegraphics[width=\linewidth]{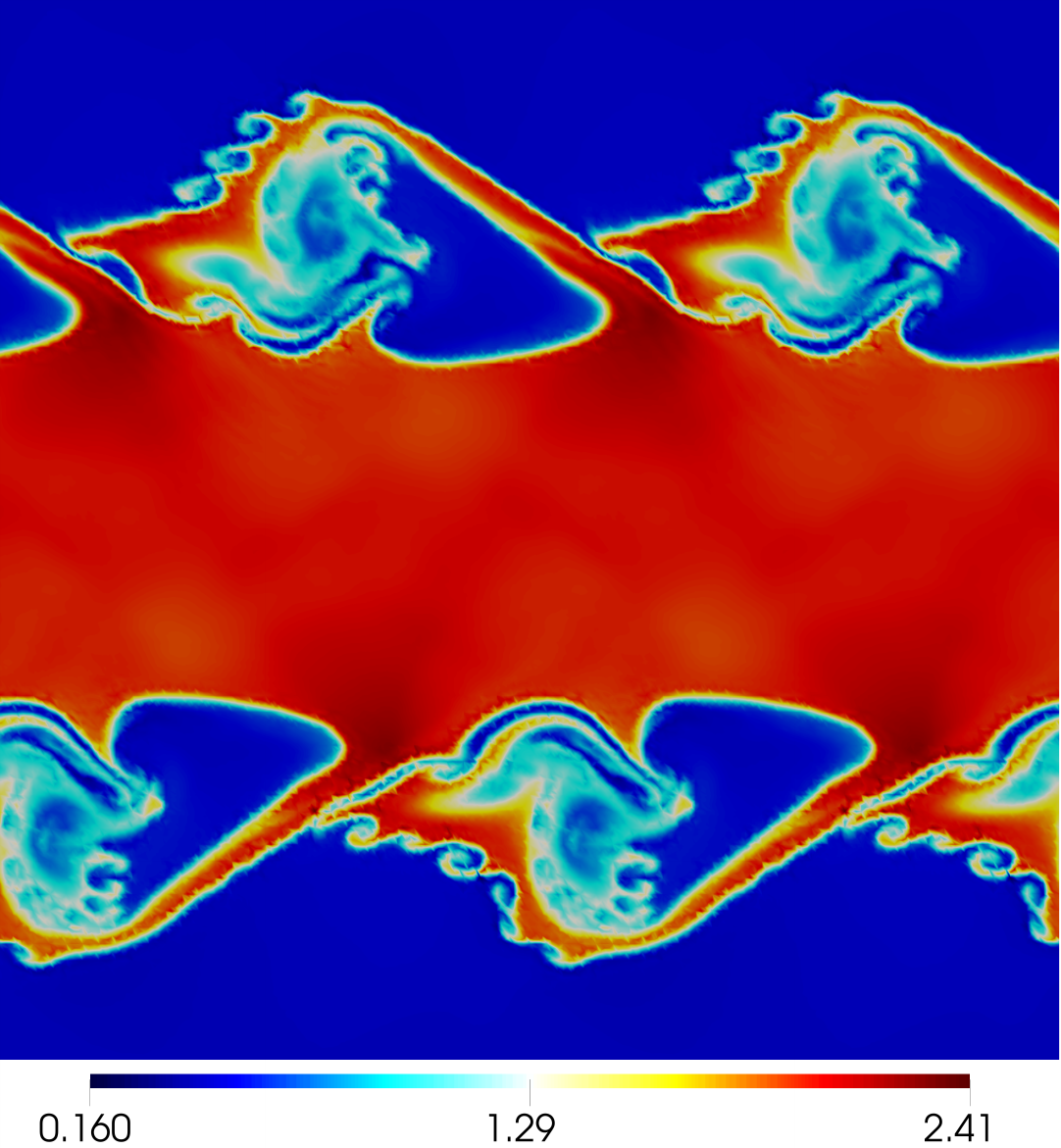}
\caption{Density, \(t=3.7\)}
\label{fig:khi_density_t3p7}
\end{subfigure}
\hfill
\begin{subfigure}[t]{0.485\textwidth}
\centering
\includegraphics[width=\linewidth]{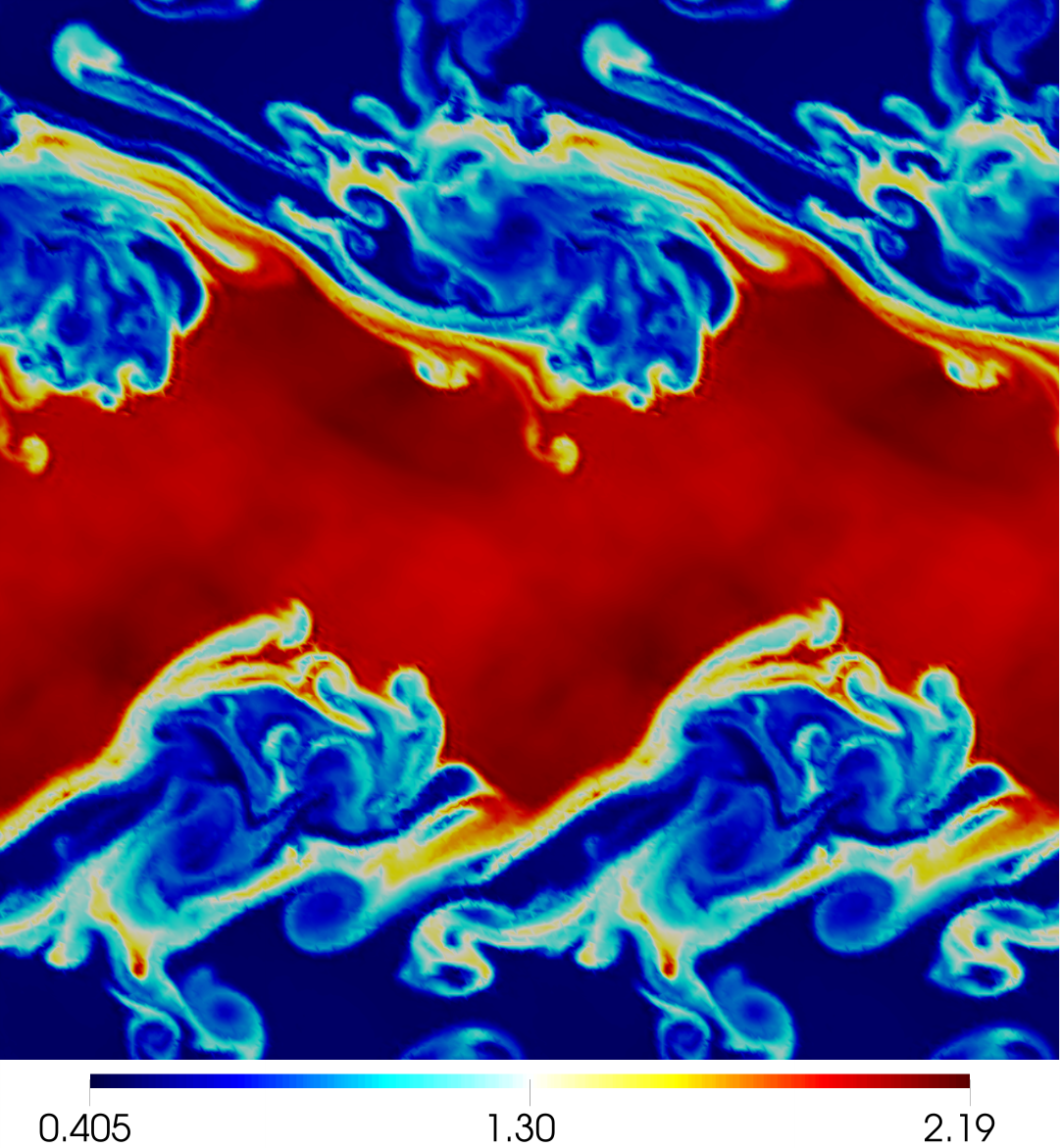}
\caption{Density, \(t=6.7\)}
\label{fig:khi_density_t6p7}
\end{subfigure}
\caption{CD-RCM-AD density and evolving facet partition for the
Kelvin--Helmholtz problem with \(p=4\) and \(n_{\rm edge}=64\). D-SBP facets
are shown in red.}
\label{fig:khi_hybrid_solution}
\end{figure}

We use the same \(p=4\), \(n_{\rm edge}=64\) CD-RCM-AD calculation to study the impact of 
the solution transfer between D and C facets on conservation and entropy stability. At every D-to-C transfer time
\(t_i\), we record
\begin{align}
    \mathcal E_{\rm con,tr}
    &\coloneqq
    \max_i\sum_{j=1}^{n_c}
    \left|
    \sum_{\Omega_k\in\mathcal T_h}
    \bm1_k^T\H_k
    \left(\bm u_{k,j}^{+}-\bm u_{k,j}^{-}\right)
    \right|,
    \label{eq:khi_transfer_conservation}\\
    \Delta S_{{\rm tr},i}
    &=
    S_h(\bm u_b(t_i^+))-S_h(\bm u_b(t_i^-)),
    \label{eq:khi_transfer_entropy_change}\\
    \Delta S_{\rm tr}(t)
    &\coloneqq
    \sum_{t_i\le t}
    \Delta S_{{\rm tr},i}.
    \label{eq:khi_transfer_entropy}
\end{align}
The first quantity checks conservation through repartitioning, while the
remaining two quantify the entropy change caused by each D-to-C projection
and its cumulative contribution, separately from spatial LLF and AD.

\begin{figure}[!t]
\centering
\begin{subfigure}[t]{0.485\textwidth}
\centering
\includegraphics[width=\linewidth]{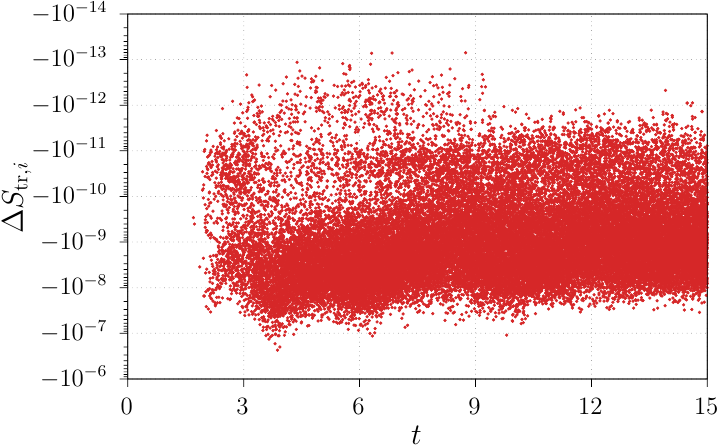}
\caption{Instantaneous change, \(\Delta S_{{\rm tr},i}\)}
\label{fig:khi_transfer_entropy_instantaneous}
\end{subfigure}
\hfill
\begin{subfigure}[t]{0.485\textwidth}
\centering
\includegraphics[width=\linewidth]{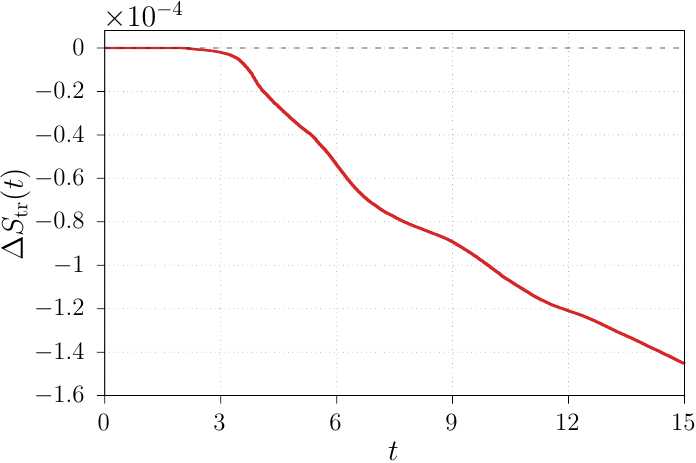}
\caption{Cumulative change, \(\Delta S_{\rm tr}(t)\)}
\label{fig:khi_transfer_entropy_cumulative}
\end{subfigure}
\caption{Entropy change caused by D-to-C transfers during the CD-RCM-AD
Kelvin--Helmholtz simulation with \(p=4\) and \(n_{\rm edge}=64\).}
\label{fig:khi_transfer_entropy}
\end{figure}

All \(43\,152\) recorded D-to-C projections decrease entropy, and
the maximum summed conservation defect is
\(\mathcal E_{\rm con,tr}\approx4.89\times10^{-15}\), consistent with
\cref{thm:cdsbp_state_transfer}. The cumulative transfer entropy change in
\cref{fig:khi_transfer_entropy} decreases monotonically to approximately
\(-1.45\times10^{-4}\) at \(t=15\). With \(S_h(0)\approx6.8\) and \(S_h(15)\approx5.1\) for this run, the cumulative transfer entropy decrease is less than \(0.01\%\) of
the net entropy decrease. Thus, we conclude that the transfer is conservative and
entropy-nonincreasing, with small cumulative entropy dissipation relative to the entropy dissipation by the spatial correction, LLF coupling, and volume AD.

\subsection{Two-dimensional Riemann problem}
\label{sec:numerics_riemann}
The four-quadrant Riemann problem tests the schemes on interacting shocks and
contact discontinuities. It is also useful for examining solution artifacts
reported for a mean-subtracted residual correction in a related high-order
calculation \cite[Sec.~5]{chan2025artificial}. Following the setup in
\cite{worku2026targeted}, we solve the Euler equations on the enlarged
periodic domain \(\Omega=[-1,1]^2\) until \(t_f=0.25\), with primitive initial
data
\begin{equation}\label{eq:riemann_numerical_initial_condition}
    (\rho,u,v,p)(x,y,0)
    =
    \begin{cases}
        (0.5313,\;0,\;0,\;0.4), & x\ge0,\;y\ge0,\\
        (1,\;0.7276,\;0,\;1), & x<0,\;y\ge0,\\
        (0.8,\;0,\;0,\;1), & x<0,\;y<0,\\
        (1,\;0,\;0.7276,\;1), & x\ge0,\;y<0.
    \end{cases}
\end{equation}
We compare C-2pt and C-RCM with volume AD of
\cite{worku2026targeted} against D-2pt, D-RCM, and CD-RCM without volume AD. For this test, the LLF wave speeds use the Davis estimate
\cite{davis1988simplified} rather than the Guermond--Popov bound, since the
no-AD D-RCM and CD-RCM calculations with the latter did not reach \(t_f\).
This choice does not imply that the Davis estimate is generally more robust
or positivity preserving; it also fails for other degrees and mesh
resolutions. For CD-RCM,
elements adjacent to the initial jumps, including across periodic boundaries,
are labeled D, and the
subsequent labels use the same \(\theta_k\) thresholds as in the
Kelvin--Helmholtz test case.

\begin{figure}[!htbp]
\centering
\begin{subfigure}[t]{0.35\textwidth}
\centering
\vspace{0pt}
\includegraphics[width=\linewidth]{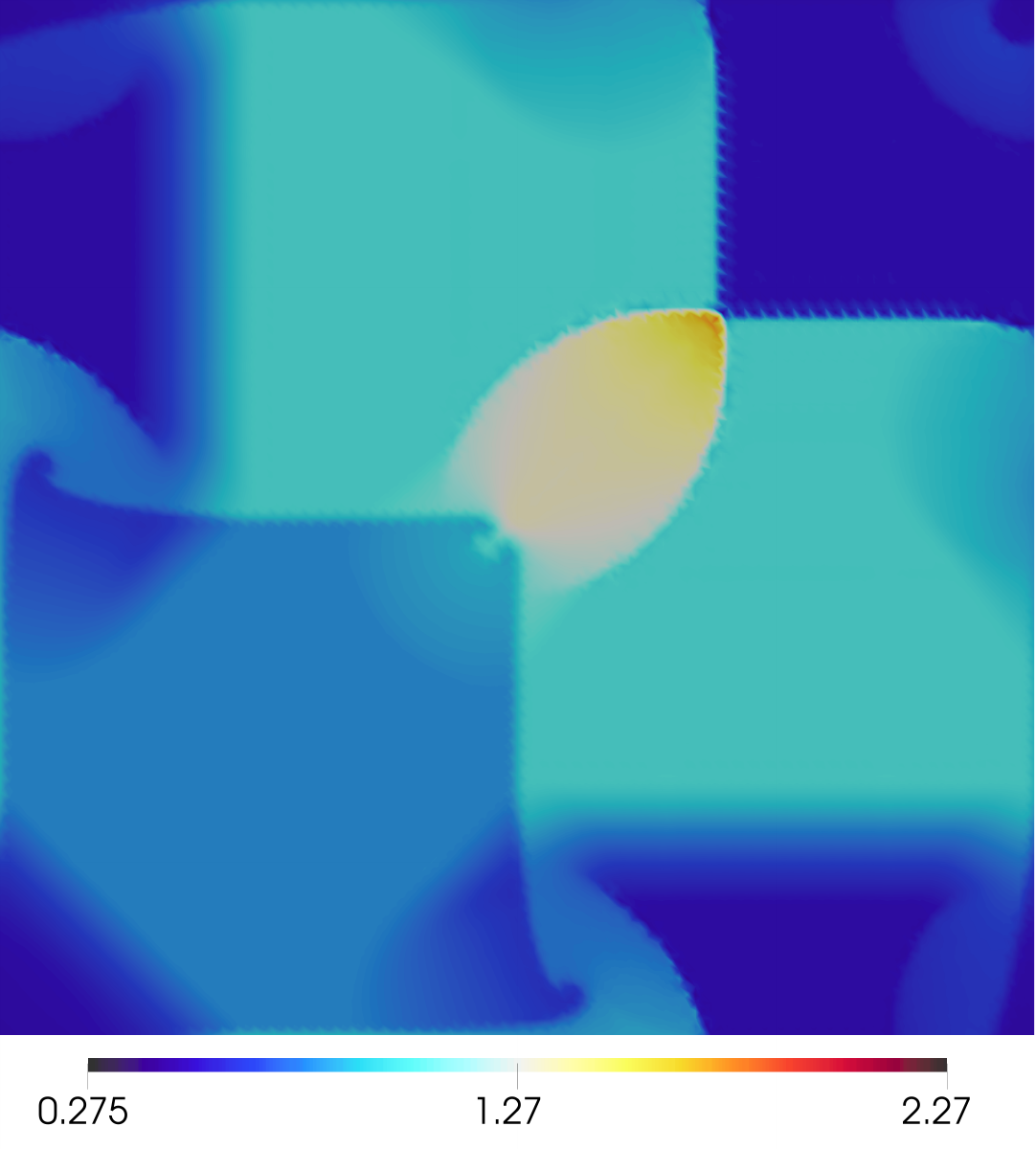}
\caption{C-2pt-AD}
\label{fig:riemann_c_2pt_ad}
\end{subfigure}
\hspace{0.04\textwidth}
\begin{subfigure}[t]{0.35\textwidth}
\centering
\vspace{0pt}
\includegraphics[width=\linewidth]{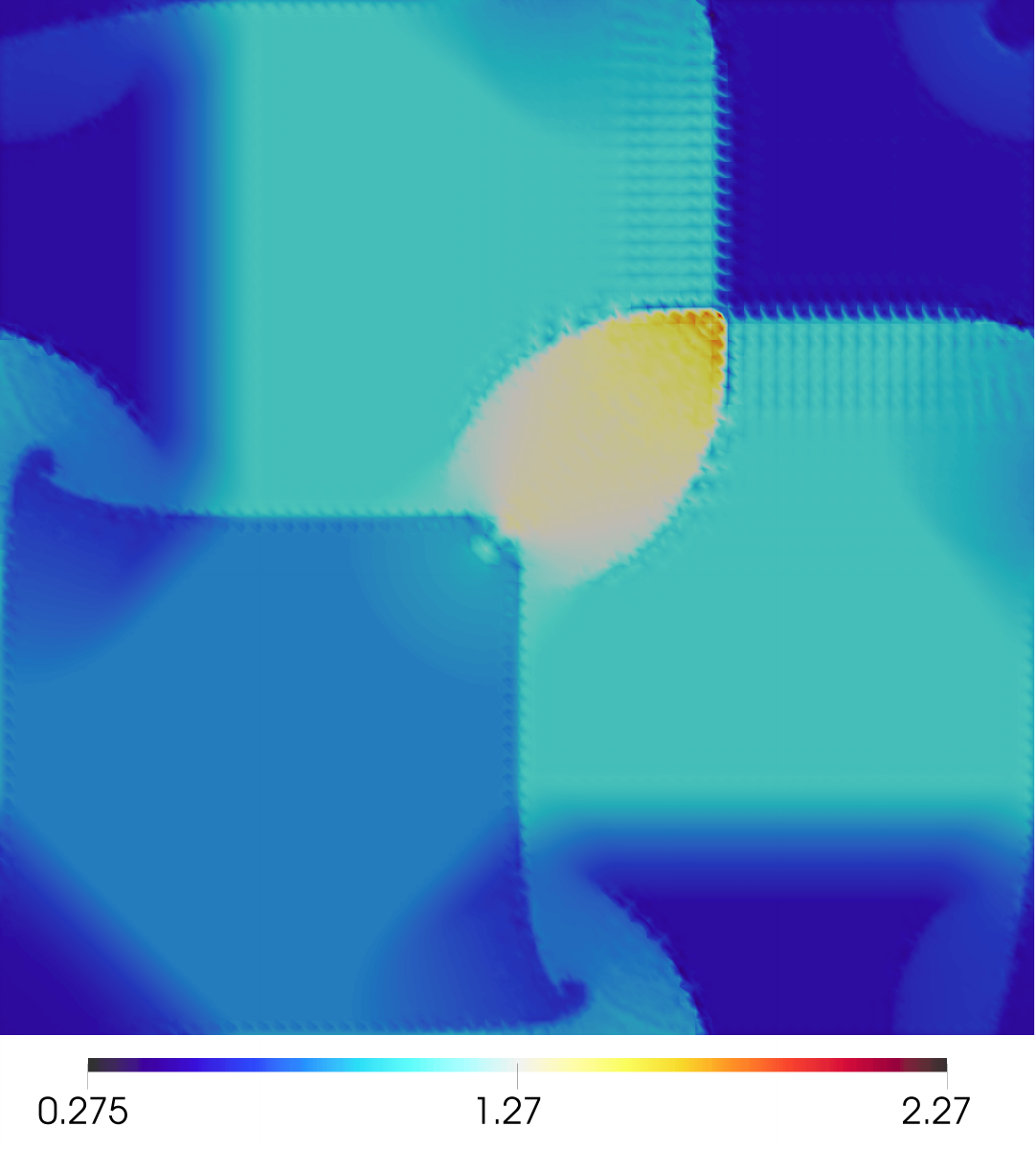}
\caption{D-2pt}
\label{fig:riemann_d_2pt}
\end{subfigure}

\smallskip
\begin{subfigure}[t]{0.35\textwidth}
\centering
\vspace{0pt}
\includegraphics[width=\linewidth]{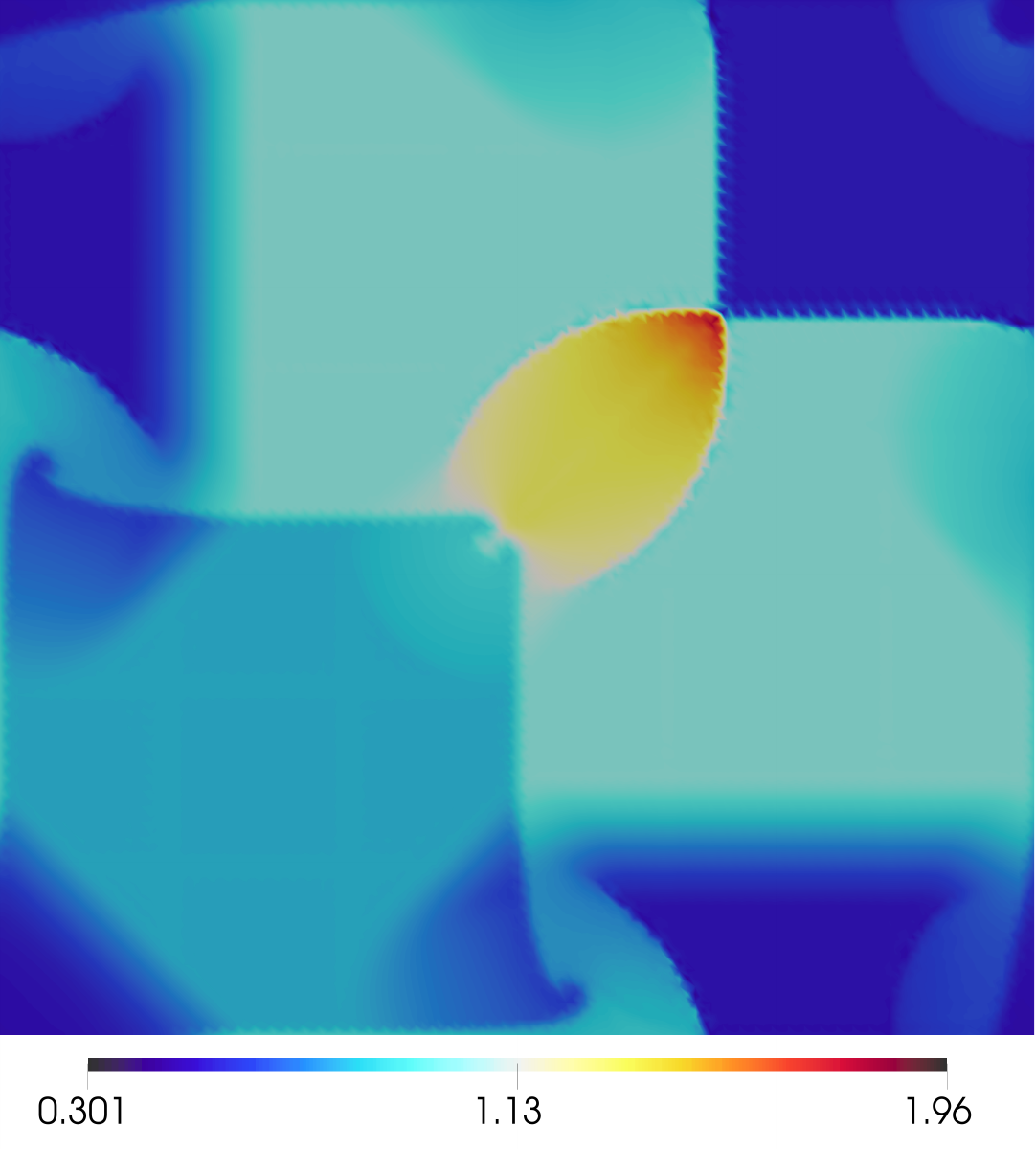}
\caption{C-RCM-AD}
\label{fig:riemann_c_rcm_ad}
\end{subfigure}
\hspace{0.04\textwidth}
\begin{subfigure}[t]{0.35\textwidth}
\centering
\vspace{0pt}
\includegraphics[width=\linewidth]{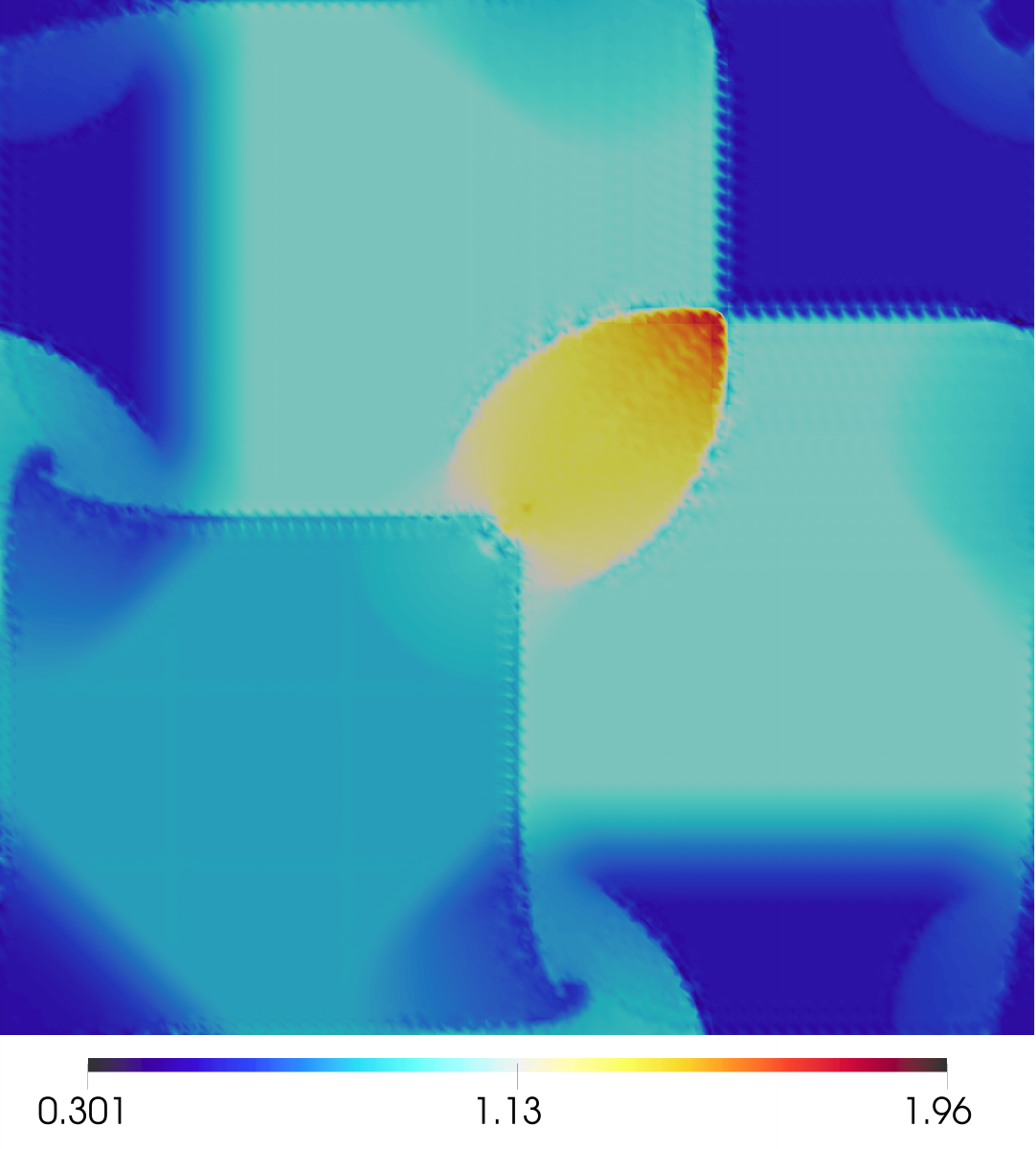}
\caption{D-RCM}
\label{fig:riemann_d_rcm}
\end{subfigure}

\smallskip
\begin{subfigure}[t]{0.35\textwidth}
\centering
\vspace{0pt}
\includegraphics[width=\linewidth]{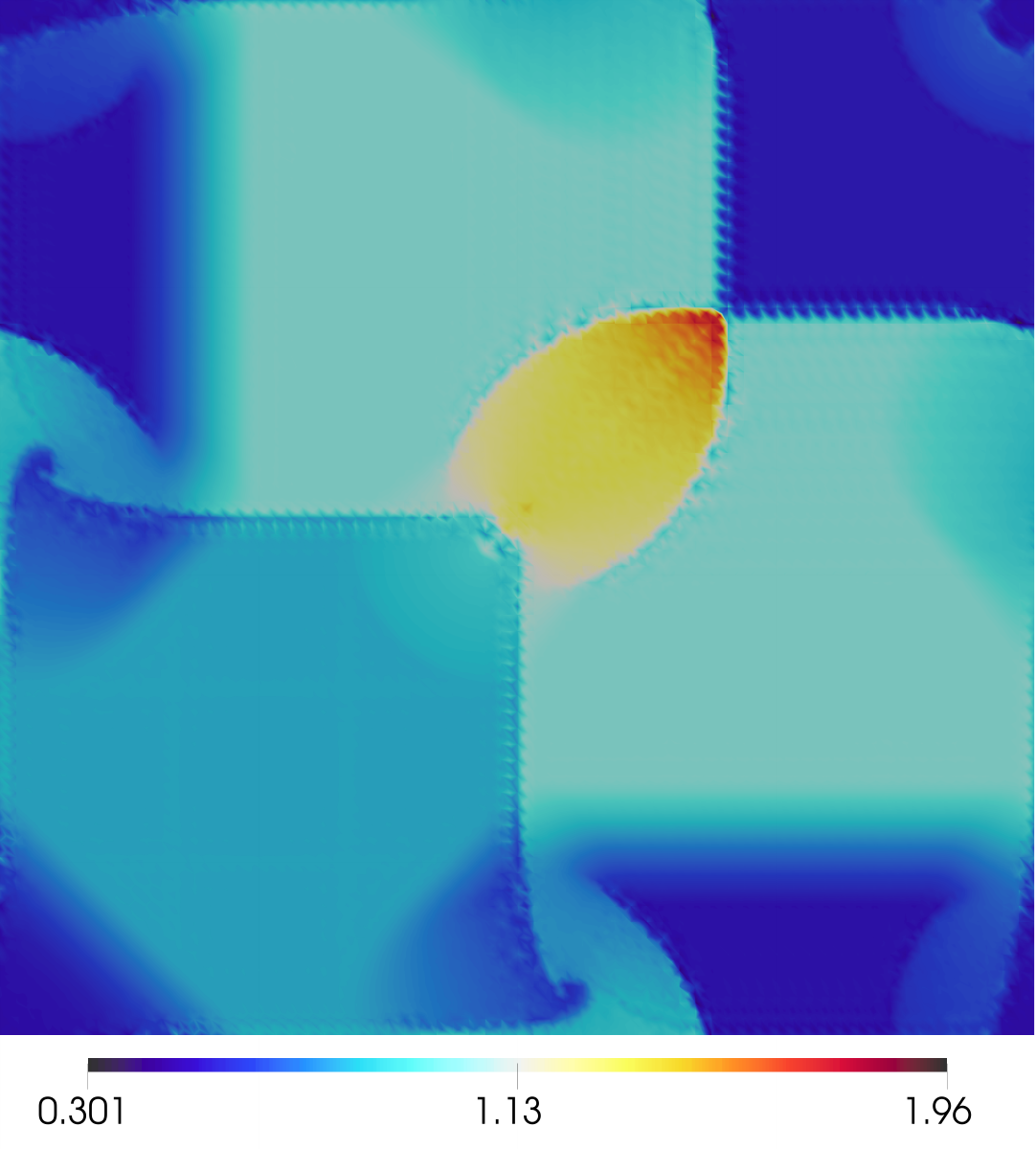}
\caption{CD-RCM}
\label{fig:riemann_cd_rcm}
\end{subfigure}
\hspace{0.04\textwidth}
\begin{subfigure}[t]{0.35\textwidth}
\centering
\vspace{0pt}
\includegraphics[width=\linewidth,trim=0 -57bp 0 0,clip]{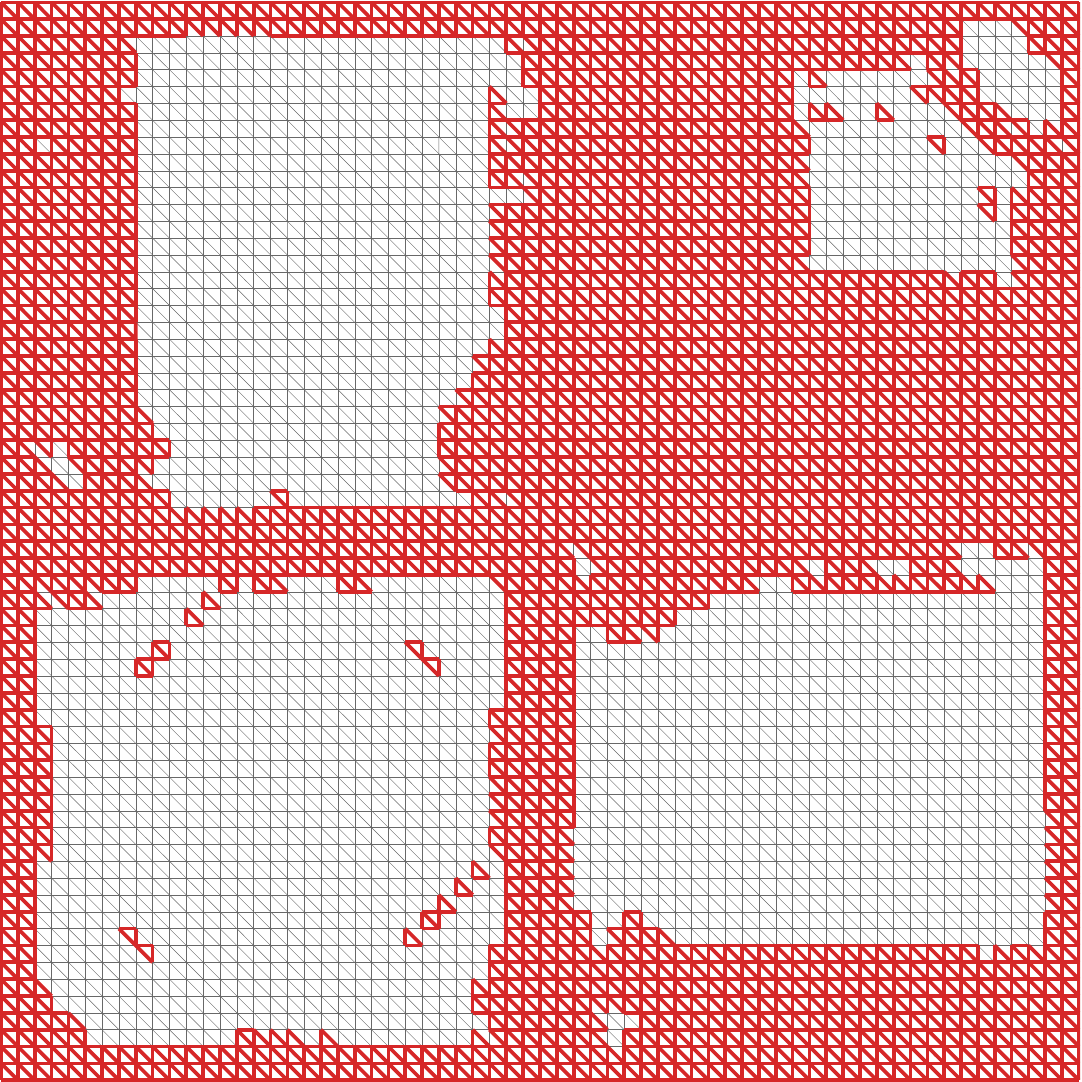}
\caption{CD-RCM facet partition}
\label{fig:riemann_cd_mesh}
\end{subfigure}
\caption{Two-dimensional Riemann problem at \(t=0.25\) with \(p=3\)
and \(n_{\rm edge}=64\). In (\subref{fig:riemann_cd_mesh}), C-SBP facets are gray
and D-SBP facets are red.}
\label{fig:riemann_hybrid_solution}
\end{figure}

\Cref{fig:riemann_hybrid_solution} shows the density fields and the
CD-RCM facet partition at \(t_f=0.25\). Compared with D-2pt in
(\subref{fig:riemann_d_2pt}), D-RCM and CD-RCM in
(\subref{fig:riemann_d_rcm}) and (\subref{fig:riemann_cd_rcm}) show less extensive oscillatory bands adjacent to
the approximately horizontal and vertical fronts. This qualitative
comparison is specific to the present mesh and degree: all three no-AD
solutions retain local oscillations near the fronts and in the central
wave-interaction region. The continuous schemes with AD in
(\subref{fig:riemann_c_2pt_ad}) and (\subref{fig:riemann_c_rcm_ad})
produce smoother density fields, although some near-front oscillations
remain. D-RCM and CD-RCM use only LLF coupling and the
\(\sigma_k^{\rm ed}\) residual correction for spatial damping; these terms
do not damp all under-resolved features at this resolution. The hybrid
density field closely resembles D-RCM while retaining C-SBP coupling away
from the marked regions in (\subref{fig:riemann_cd_mesh}).

\section{Conclusions}\label{sec:conclusions}
We have developed a hybrid C/D-SBP method with an element-local residual
correction, in which each interior facet of a conforming mesh may use either a C-SBP assembly or a D-SBP numerical-flux coupling. The same correction
term is used for every facet partition, while numerical fluxes and SATs are
evaluated only on D-SBP facets. The all-C and all-D partitions recover the
corresponding endpoint methods exactly.

For a fixed partition, each element residual satisfies conservation and entropy
identities, and the assembled method is exactly globally conservative and
satisfies a global entropy estimate with an explicit regularization defect,
while retaining design-order spatial consistency. For a changing partition, newly connected
nodal copies are transferred by a
mass-weighted average. The transfer is globally conservative,
entropy-nonincreasing, admissibility-preserving, and error non-amplifying in
the \(\overline\H_b\)-norm. These results allow the C/D partition to change
while retaining the conservation and entropy properties of the underlying
scheme.

Numerically, CD-RCM retains the smooth-solution accuracy of D-RCM,
with closely matching convergence rates and long-time behavior.
Conservation defects remain small, and the KHI transfer studies confirm
conservation near roundoff and entropy decrease for all recorded
projections. In the Burgers case, the hybrid scheme confines D-SBP coupling near
the moving shock while maintaining solution profiles and accepted-step
counts close to D-RCM. In the 2D Riemann test case, D-RCM and CD-RCM also show
less extensive near-front oscillatory bands than D-2pt without volume AD,
although local oscillations remain. Restricting SATs to D-SBP facets reduces interface work per residual evaluation; however, this does not necessarily reduce total cost,
particularly when hybrid runs require more time steps. The nonlinear tests support residual correction as a competitive alternative to Hadamard-form flux differencing, with the
clearest agreement in robustness when coupled with AD of
\cite{worku2026targeted}.

Future work will extend the construction to nonmatching \(hp\)-adaptation using D-SBP buffer regions and compatible mortar operators,
and to viscous systems for which restricting SATs to D-SBP facets may provide
larger computational savings.

\appendix
\crefalias{section}{appendix}
\Crefname{appendix}{Appendix}{Appendices}
\crefname{appendix}{Appendix}{Appendices}

\section{Entropy-defect superconvergence: Proof of
\texorpdfstring{\cref{lem:entropy_defect_scaling}}
{Lemma~\ref{lem:entropy_defect_scaling}}}
\label{app:entropy_defect_scaling}

\begin{proof}
    For diagonal-\(\E\) operators, the boundary decomposition and nodal
    extraction property, together with
    \(\Q_{x_i k}+\Q_{x_i k}^T=\E_{x_i k}\) and
    \(\psi^{(i)}=\fnb W^T\fnb F^{(i)}-\fn G^{(i)}\), give
    \begin{equation}\label{eq:common_entropy_defect}
        \delta_k
        =
        \sum_{i=1}^{d}
        \left(
            \bm w_k^T\overline{\Q}_{x_i k}\bm f_k^{(i)}
            -
            \bm1_k^T\E_{x_i k}\bm g_k^{(i)}
        \right)
        =
        \sum_{i=1}^{d}
        \left(
            -\bm w_k^T\overline{\Q}_{x_i k}^T\bm f_k^{(i)}
            +\bm1_k^T\E_{x_i k}\bm\psi_k^{(i)}
        \right).
    \end{equation}
    Let \(\bm x_{k,c}\in\Omega_k\) be fixed and apply Taylor expansions
    componentwise in the physical coordinates. For example, in two
    dimensions and for \(\ell=1,\ldots,n_c\), let
    \(\Delta x\coloneqq x-x_{k,c}\) and
    \(\Delta y\coloneqq y-y_{k,c}\). Then, we have
    \begin{equation}\label{eq:componentwise_taylor_expansion}
        \begin{aligned}
            \fn W_{\ell}(x,y)
            ={}&
            \fn W_{\ell}(x_{k,c},y_{k,c})
            +
            \frac{\partial\fn W_{\ell}}{\partial x}
            \Delta x
            +
            \frac{\partial\fn W_{\ell}}{\partial y}
            \Delta y
            \\
            &+
            \frac{1}{2}
            \left[
                \frac{\partial^2\fn W_{\ell}}{\partial x^2}
                \Delta x^2
                +
                2
                \frac{\partial^2\fn W_{\ell}}{\partial x\partial y}
                \Delta x\Delta y
                +
                \frac{\partial^2\fn W_{\ell}}{\partial y^2}
                \Delta y^2
            \right]
            +
            \cdots,
        \end{aligned}
    \end{equation}
    where all derivatives are evaluated at \((x_{k,c},y_{k,c})\). Let
    \(\fn W_{k,m,\ell}\) denote the sum of all terms of total degree \(m\)
    in this expansion and set
    \[
        \fnb W_{k,m}
        \coloneqq
        \left[
            \fn W_{k,m,1},
            \ldots,
            \fn W_{k,m,n_c}
        \right]^T.
    \]
    For example, \(\fn W_{k,2,\ell}\) is the quadratic contribution in
    \cref{eq:componentwise_taylor_expansion}. Defining
    \(\fnb F_{k,n}^{(i)}\) and \(\fn G_{k,r}^{(i)}\) analogously, and using
    \(\lVert\bm x-\bm x_{k,c}\rVert=\fn{O}(h)\), we obtain
    \begin{align*}
        \fnb W
        &=
        \sum_{m=0}^{2p}\fnb W_{k,m}
        +
        \fn{O}\!\left(h^{2p+1}\right),
        &
        \fnb W_{k,m}
        &=
        \fn{O}(h^m),
        \\
        \fnb F^{(i)}
        &=
        \sum_{n=0}^{2p}\fnb F_{k,n}^{(i)}
        +
        \fn{O}\!\left(h^{2p+1}\right),
        &
        \fnb F_{k,n}^{(i)}
        &=
        \fn{O}(h^n),
        \\
        \fn G^{(i)}
        &=
        \sum_{r=0}^{2p}\fn G_{k,r}^{(i)}
        +
        \fn{O}\!\left(h^{2p+1}\right),
        &
        \fn G_{k,r}^{(i)}
        &=
        \fn{O}(h^r).
    \end{align*}
    Let the corresponding lowercase bold letters denote nodal
    restrictions. Substitution into \cref{eq:common_entropy_defect} gives
    \[
        \begin{aligned}
        \delta_k
        ={}&
        \sum_{i=1}^{d}
        \Bigg[
            \left(
                \sum_{m=0}^{2p}\bm w_{k,m}
                +
                \fn{O}(h^{2p+1})
            \right)^T
            \overline{\Q}_{x_i k}
            \left(
                \sum_{n=0}^{2p}\bm f_{k,n}^{(i)}
                +
                \fn{O}(h^{2p+1})
            \right)
        \\
        &\hspace{30mm}
            -
            \bm1_k^T\E_{x_i k}
            \left(
                \sum_{r=0}^{2p}\bm g_{k,r}^{(i)}
                +
                \fn{O}(h^{2p+1})
            \right)
        \Bigg].
        \end{aligned}
    \]
    Consider a volume term with \(m+n\le2p\). If \(n\le p\), the
    derivative of \(\fnb F_{k,n}^{(i)}\) and the resulting volume
    quadrature are exact. Otherwise, \(m\le p-1\), and the SBP property
    transfers the derivative to \(\fnb W_{k,m}\), after which the volume
    and facet quadratures are exact. Therefore, we have
    \begin{align*}
        \bm w_{k,m}^T
        \overline{\Q}_{x_i k}
        \bm f_{k,n}^{(i)}
        &=
        \int_{\Omega_k}
        \fnb W_{k,m}^T
        \pder[\fnb F_{k,n}^{(i)}]{x_i}
        \dd\Omega,
        && m+n\le2p,
        \\
        \bm1_k^T\E_{x_i k}\bm g_{k,r}^{(i)}
        &=
        \int_{\Omega_k}
        \pder[\fn G_{k,r}^{(i)}]{x_i}
        \dd\Omega,
        && r\le2p.
    \end{align*}
    The physical mass-multiplied derivative and boundary operators scale as
    \(\fn{O}(h^{d-1})\). Thus, every term containing a Taylor remainder is
    \(\fn{O}(h^{2p+d})\). In addition, both the discrete volume terms and their corresponding integrals are \(\fn{O}(h^{m+n+d-1})\); hence contributions with \(m+n>2p\) can be absorbed into the \(\fn{O}(h^{2p+d})\) remainder, giving
    \[
        \begin{aligned}
        \delta_k
        ={}&
        \sum_{i=1}^{d}
        \int_{\Omega_k}
        \left[
            \left(
                \sum_{m=0}^{2p}\fnb W_{k,m}
            \right)^T
            \left(
                \sum_{n=0}^{2p}
                \pder[\fnb F_{k,n}^{(i)}]{x_i}
            \right)
            -
            \sum_{r=0}^{2p}
            \pder[\fn G_{k,r}^{(i)}]{x_i}
        \right]
        \dd\Omega
        +
        \fn{O}(h^{2p+d}).
        \end{aligned}
    \]
    The Taylor remainder estimates yield
    \begin{align*}
        \sum_{m=0}^{2p}\fnb W_{k,m}
        &=
        \fnb W+\fn{O}(h^{2p+1}),
        \\
        \sum_{n=0}^{2p}
        \pder[\fnb F_{k,n}^{(i)}]{x_i}
        &=
        \pder[\fnb F^{(i)}]{x_i}+\fn{O}(h^{2p}),
        \\
        \sum_{r=0}^{2p}
        \pder[\fn G_{k,r}^{(i)}]{x_i}
        &=
        \pder[\fn G^{(i)}]{x_i}+\fn{O}(h^{2p}).
    \end{align*}
    Since \(V_k=\fn{O}(h^d)\), substitution and the entropy compatibility
    identity give
    \[
        \begin{aligned}
        \delta_k
        &=
        \sum_{i=1}^{d}
        \int_{\Omega_k}
        \left[
            \fnb W^T
            \pder[\fnb F^{(i)}]{x_i}
            -
            \pder[\fn G^{(i)}]{x_i}
            +
            \fn{O}(h^{2p})
        \right]
        \dd\Omega
        +
        \fn{O}(h^{2p+d})
        =
        \fn{O}(h^{2p+d}),
        \end{aligned}
    \]
    which proves the result.
\end{proof}

\section{Sensitivity of noncompatible SAT-dependent corrections}
\label{app:sat_dependent_correction}
By \cref{eq:sat_defect_decomposition}, all neighboring-trace dependence
of \(\delta_k^{\rm SAT}\) enters through \(\mathcal E_{\gamma k}^{\star}\). We
quantify the effect of using noncompatible numerical flux pairs \((\bm f^*_{\gamma k}, \bm g^{\star}_{\gamma k})\), which do not lead to the cancellation in \cref{eq:sat_ec_cancellation}, on the SAT-dependent correction \(\bm c_{k,\epsilon}^{\rm SAT}\coloneqq \bm\alpha_{k,\epsilon}\delta_k^{\rm SAT}\). Consider the centered (symmetric) numerical entropy flux
\begin{equation}\label{eq:app_sat_centered_entropy_flux}
    \bm g_{\gamma k}^{\star}
    \coloneqq
    \frac12\sum_{i=1}^{d}\N_{x_i\gamma k}
    \left(\bm g_{\gamma k}^{(i)}+\bm g_{\gamma v}^{(i)}\right).
\end{equation}
To study the resulting dependence of \(\bm c_{k,\epsilon}^{\rm SAT}\) on the
neighboring trace, keep the state on \(\Omega_k\) fixed and set
\[
    \bm e_\gamma
    \coloneqq
    \bm u_{\gamma k}-\bm u_{\gamma v}.
\]
Since \(\bm u_{\gamma v}+\bm e_\gamma=\bm u_{\gamma k}\), the path
\(\bm u_{\gamma v}+\theta\bm e_\gamma\) joins the actual neighboring trace
to its matched value. Along this path, the facet mismatch in
\cref{eq:sat_defect_decomposition} is
\[
    \begin{aligned}
    \mathcal E_{\gamma k}^{\star}(\theta)
    ={}&
    \bm w_{\gamma k}^T\overline{\B}_\gamma
    \bm f_{\gamma k}^{*}
    \left(\bm u_{\gamma k},
    \bm u_{\gamma v}+\theta\bm e_\gamma\right)\\
    &-\frac12\sum_{i=1}^{d}
    \bm1_\gamma^T\B_\gamma\N_{x_i\gamma k}
    \left[
    \bm g^{(i)}(\bm u_{\gamma k})
    +\bm g^{(i)}
    \left(\bm u_{\gamma v}+\theta\bm e_\gamma\right)
    \right]
    -\bm1_\gamma^T\B_\gamma\bm\psi_{\gamma k}^{n},
    \end{aligned}
\]
where \(0\le\theta\le1\).
Thus, \(\mathcal E_{\gamma k}^{\star}(0)\) is the actual facet mismatch and
\(\mathcal E_{\gamma k}^{\star}(1)=0\) is its matched-trace value.  Assume that this
segment lies in a compact subset of the admissible set on which the physical
entropy fluxes and numerical facet flux are continuously differentiable.  The
mean-value theorem then gives, for some \(\theta_*\in(0,1)\),
\begin{equation}\label{eq:app_sat_mean_value}
    \begin{aligned}
    \Delta\mathcal E_{\gamma k}^{\star}
    &\coloneqq
    \mathcal E_{\gamma k}^{\star}(1)-\mathcal E_{\gamma k}^{\star}(0)
    =
    \left.
    \frac{\mathrm d\mathcal E_{\gamma k}^{\star}}
         {\mathrm d\bm u_{\gamma v}}
    \right|_{\bm u_{\gamma v}+\theta_*\bm e_\gamma}
    \bm e_\gamma.
    \end{aligned}
\end{equation}
On this set,
\(\mathrm d\bm g^{(i)}/\mathrm d\bm u=\fn{O}(1)\).  Since
\(\bm f_{\gamma k}^{*}\) includes the surface-scaled normal,
\(\mathrm d\bm f_{\gamma k}^{*}/\mathrm d\bm u_{\gamma v}
=\fn{O}(h^{d-1})\).  Together with
\(\B_\gamma=\fn{O}(1)\) and
\(\N_{x_i\gamma k}=\fn{O}(h^{d-1})\), these estimates show that
\(\mathrm d\mathcal E_{\gamma k}^{\star}/\mathrm d\bm u_{\gamma v}
=\fn{O}(h^{d-1})\).  Hence, \cref{eq:app_sat_mean_value} implies
\begin{equation}\label{eq:app_sat_facet_sensitivity}
    |\Delta\mathcal E_{\gamma k}^{\star}|
    \le
    C h^{d-1}\lVert\bm e_\gamma\rVert_2 .
\end{equation}

The remaining terms in \(\delta_k^{\rm SAT}\) are unchanged when only this
neighboring trace is perturbed; hence,
\(\Delta\delta_k^{\rm SAT}=\Delta\mathcal E_{\gamma k}^{\star}\).  Moreover,
\(\bm\alpha_{k,\epsilon}\) depends only on the state in \(\Omega_k\), which
has been held fixed. If the element-local state satisfies the smooth-state
assumptions of \cref{lem:correction_terms_scaling}, then
\(\lVert\bm\alpha_{k,\epsilon}\rVert_2=\fn O(h^{-1})\) by
\cref{thm:reference_normalization}. Therefore, we obtain
\begin{align}
    \Delta\!\left(
    \overline{\H}_k^{-1}\bm c_{k,\epsilon}^{\rm SAT}
    \right)
    &=
    \overline{\H}_k^{-1}\bm\alpha_{k,\epsilon}
    \Delta\delta_k^{\rm SAT},
    \label{eq:app_sat_correction_change}\\
    \left\lVert
    \Delta\!\left(
    \overline{\H}_k^{-1}\bm c_{k,\epsilon}^{\rm SAT}
    \right)
    \right\rVert_2
    &\le
    \lVert\overline{\H}_k^{-1}\rVert_2
    \lVert\bm\alpha_{k,\epsilon}\rVert_2
    |\Delta\mathcal E_{\gamma k}^{\star}|\notag\\
    &\le
    C h^{-d}h^{-1}h^{d-1}
    \lVert\bm e_\gamma\rVert_2
    =C h^{-2}\lVert\bm e_\gamma\rVert_2.
    \label{eq:app_sat_correction_sensitivity}
\end{align}
For the diagonal-\(\E\) SBP operators considered in this paper, the
nodal sampling of a smooth exact solution satisfies
\(\bm u_{\gamma k}=\bm u_{\gamma v}\), and hence
\(\bm e_\gamma=\bm0\). Consequently,
\(\mathcal E_{\gamma k}^{\star}=0\),
\(\delta_k^{\rm SAT}=\delta_k=\fn O(h^{2p+d})\), and
\begin{equation}
    \left\lVert
    \overline{\H}_k^{-1}\bm c_{k,\epsilon}^{\rm SAT}
    \right\rVert_2
    =\fn O(h^{2p-1}).
\end{equation}
Thus, the neighboring-trace dependence does not alter the formal consistency
estimate. For a general D-SBP numerical state, however, continuity is not imposed and \(\lVert\bm e_\gamma\rVert_2\) need not be small. In that case,
whenever the element-local smooth-state scaling used above holds,
\cref{eq:app_sat_correction_sensitivity} bounds the change in the
inverse-norm-scaled correction by \(C h^{-2}\lVert\bm e_\gamma\rVert_2\).
Hence, mismatches between neighboring D-SBP traces may induce large
changes in noncompatible SAT-dependent corrections after application
of the inverse norm. As in the ECAV construction of Chan
\cite{chan2025artificial}, the entropy defect \(\delta_k\) used in this work
does not depend on traces from neighboring elements and therefore avoids this
sensitivity mechanism.

For a SAT-dependent correction using a non-diagonal-\(\E\) SBP operator with the same facet-defect structure, suppose that the two restrictions of a smooth exact state satisfy \(\lVert\bm e_\gamma\rVert_2=\fn O(h^{p+1})\) and that the facet defect satisfies the bounded-derivative estimate used above. We also assume that \(\delta_k=\fn O(h^{2p+d})\) (note that \cref{lem:entropy_defect_scaling} assumes the use of diagonal-\(\E\) SBP operators). Then, using
\(\delta_k=\fn O(h^{2p+d})\),
\(\lVert\bm\alpha_{k,\epsilon}\rVert_2=\fn O(h^{-1})\), and
\(\lVert\overline{\H}_k^{-1}\rVert_2=\fn O(h^{-d})\), we have
\begin{equation}
    |\Delta\mathcal E_{\gamma k}^{\star}|=\fn O(h^{p+d}),
    \quad
    |\delta_k^{\rm SAT}|=\fn O(h^{p+d}),
    \qquad
    \left\lVert
    \overline{\H}_k^{-1}\bm c_{k,\epsilon}^{\rm SAT}
    \right\rVert_2
    =\fn O(h^{p-1}).
\end{equation}
This bound need not be sharp, but it does not guarantee design-order accuracy.
Abgrall \cite{abgrall2018general} reported a related one-order loss and showed
that it can be avoided with a compatible choice of the numerical entropy flux.
For collocated facet and volume nodes, the compatible entropy-conservative construction in
\cref{sec:relation_existing_corrections} gives the exact cancellation in
\cref{eq:sat_ec_cancellation}, eliminating both this possible order reduction and the
neighboring-trace sensitivity.

\section*{CRediT authorship contribution statement}
\textbf{Zelalem Arega Worku:} Conceptualization, Methodology, Formal analysis, Software, Investigation, Validation, Data Curation, Visualization, Writing -- original draft, Writing -- review \& editing. 

\textbf{David C. Del Rey Fern{\'a}ndez:} Conceptualization, Investigation, Validation, Supervision, Writing -- review \& editing.

\section*{Data availability}
Data are available from the corresponding author upon reasonable request.

\section*{Declaration of generative AI and AI-assisted
technologies in the manuscript preparation process}
OpenAI Codex was used for coding assistance and to identify and address writing and clarity issues in the manuscript. The authors wrote and reviewed the entire manuscript and take full responsibility for its content.

\bibliographystyle{elsarticle-num}
\bibliography{ref}

\end{document}